\theoremstyle{plain}
\newtheorem{thm}{Theorem}[section]
\newtheorem{lemma}[thm]{Lemma}
\newtheorem{obs}[thm]{Observation}
\newtheorem{fct}[thm]{Fact}
\newtheorem{cor}[thm]{Corollary}
\newtheorem{prop}[thm]{Proposition}
\theoremstyle{definition}
\theoremstyle{remark}
\newtheorem{problem}[thm]{Problem}
\newcommand{\bin}{\{0,1\}}
\newcommand{\Mat}{\bin^{m\times n}}
\newcommand{\Pat}{\bin^{k\times \ell}}
\newcommand{\smm}[1]{\left({\begin{smallmatrix}#1\end{smallmatrix}}
\right)}
\newcommand{\im}{\preccurlyeq}
\newcommand{\Avm}[1]{Av_{\im}\left(#1\right)}
\newcommand{\Avmax}[1]{Av_{crit}\left(#1\right)}
\newcommand{\class}[1]{\mathcal{#1}}
\newcommand{\cF}{\class{F}}
\newcommand{\cC}{\class{C}}
\newcommand{\ovD}{\overline{D}} 
\newcommand{\rc}[1]{r\left(#1\right)}
\newcommand{\cc}[1]{c\left(#1\right)}
\newcommand{\rce}[2]{r\left(#1,#2\right)}
\newcommand{\rbpat}{\class{Q}}
\newcommand{\ex}{\mathrm{ex}}
\newcommand{\exm}{\mathrm{ex}_\im}
\DeclareMathOperator{\supp}{supp}
\begin{document}
\title{On the structure of matrices avoiding interval-minor 
patterns\thanks{Supported 
by project 
16-01602Y of the Czech Science Foundation (GA\v{C}R), and by the Neuron 
Foundation for the Support of Science.}}

\author{V\'it Jel\'inek
\thanks{Computer Science Institute, Charles University, Faculty of Mathematics 
and Physics, Malostransk\'e n\'am\v est\'i 25, Prague, Czech Republic, 
\texttt{jelinek@iuuk.mff.cuni.cz}}
\and
Stanislav Ku{\v c}era
\thanks{Department of Mathematics, London School of Economics, 
 Houghton Street, 
 London, WC2A 2AE, 
 United Kingdom
\texttt{s.kucera@lse.ac.uk}}
}
\date{}
\maketitle
\begin{abstract}
We study the structure of 01-matrices avoiding a pattern $P$ as an interval 
minor. We focus on critical $P$-avoiders, i.e., on the $P$-avoiding matrices in 
which changing a 0-entry to a 1-entry always creates a copy of $P$ as an 
interval minor. 

Let $Q$ be the $3\times 3$ permutation matrix corresponding to the 
permutation~$231$. As our main result, we show that for every pattern $P$ that 
has no rotated copy of $Q$ as interval minor, there is a constant $c_P$ such 
that any row and any column in any critical $P$-avoiding matrix can be 
partitioned into at most $c_P$ intervals, each consisting entirely of 0-entries 
or entirely of 1-entries. In contrast, for any pattern $P$ that contains a 
rotated copy of~$Q$, we construct critical $P$-avoiding matrices of arbitrary 
size $n\times n$ having a row with $\Omega(n)$ alternating intervals of 
0-entries and 1-entries.
\end{abstract}

\newsavebox{\smlmat}
\savebox{\smlmat}{$\smm{\bullet&\bullet\\\bullet& }$}
\newsavebox{\smlmatb}
\savebox{\smlmatb}{$\smm{\bullet&\bullet\\\bullet&\bullet}$}
\newsavebox{\smlmatc}
\savebox{\smlmatc}{$\smm{\bullet&\bullet&\bullet\\ &\bullet& }$}

\section{Introduction}
A \emph{binary matrix} is a matrix with entries equal to 0 or 1. All matrices 
considered in this paper are binary.
The study of extremal problems of binary matrices has been initiated by the 
papers of Bienstock and Gy\H ori~\cite{bienstock} and of Füredi~\cite{furedi}. 
Since these early works, most of the research in this area has focused on the 
concept of forbidden submatrices: a matrix $M$ is said to contain a pattern $P$ 
as a submatrix if we can transform $M$ into $P$ by deleting some rows and 
columns, and by changing 1-entries into 0-entries. This notion of submatrix is a 
matrix analogue of the notion of subgraph in graph theory. 


The main problem in the study of pattern-avoiding matrices is to determine the 
extremal function $\ex(n;P)$, defined as the largest number of 1-entries in an 
$n\times n$ binary matrix avoiding the pattern $P$ as submatrix. This is an 
analogue of the classical Turán-type problem of finding a largest number of 
edges in an $n$-vertex graph avoiding a given subgraph. Despite the analogy, the 
function $\ex(n;P)$ may exhibit an asymptotic behaviour not encountered in Turán 
theory. For instance, for the pattern\footnote{We use the convention of 
representing 1-entries in binary matrices by dots and 0-entries by 
blanks.} $P=\smm{\bullet&&\bullet&\\&\bullet&&\bullet}$ Füredi and 
Hajnal~\cite{furedi_hajnal} proved that $\ex(n;P)=\Theta(n\alpha(n))$, where 
$\alpha(n)$ is the inverse of the Ackermann function.

The asymptotic behaviour of $\ex(n;P)$ for general $P$ is still not well 
understood. Füredi and Hajnal~\cite{furedi_hajnal} posed the problem of 
characterising the \emph{linear} patterns, i.e., the patterns $P$ satisfying 
$\ex(n;P)=O(n)$. Marcus and Tardos~\cite{marcus} proved that $\ex(n;P)=O(n)$ 
whenever $P$ is a \emph{permutation matrix}, i.e., $P$ has exactly one 1-entry 
in each row and each column. This result, combined with previous work of 
Klazar~\cite{klazar_fh}, has confirmed the long-standing Stanley--Wilf 
conjecture. However, the problem of characterising linear patterns is
still open despite a number of further partial
results~\cite{crowdmath,fulek,keszegh,tardos,geneson,pettie_degrees}.

Fox~\cite{fox} has introduced a different notion of containment among binary 
matrices, based on the concept of interval minors. Informally, a matrix $M$ 
contains a pattern $P$ as an interval minor if we can transform $M$ into $P$ by 
contracting adjacent rows or columns and changing 1-entries into 0-entries; see 
Section~\ref{sec-prelims} for the precise definition. In this paper, we mostly 
deal with containment and avoidance of interval minors rather than submatrices. 
Therefore, the phrases \emph{$M$ avoids $P$} or \emph{$M$ contains $P$} always 
refer to avoidance or containment of interval minors, and the term 
\emph{$P$-avoider} always refers to a matrix that 
avoids $P$ as interval minor.

In analogy with $\ex(n;P)$, it is natural to consider the corresponding 
extremal function $\exm(n;P)$ as the largest number of 1-entries in an $n\times 
n$ matrix that avoids $P$ as an interval minor. If $M$ contains $P$ as a 
submatrix, it also contains it as an interval minor, and therefore 
$\exm(n;P)\le\ex(n;P)$. Moreover, it can be easily seen that for a permutation 
matrix $P$ the two notions of containment are equivalent, and hence 
$\exm(n;P)=\ex(n;P)$.

Fox~\cite{fox} used interval minors as a key tool in his construction of 
permutation patterns with exponential Stanley--Wilf limits. In view of the 
results of Cibulka~\cite{cibulka09}, this is equivalent to constructing a 
permutation matrix $P$ for which the limit of the ratio $\ex(n;P)/n$ (which is 
equal to $\exm(n;P)/n$) is exponential in the size of~$P$. 

Even before the work of Fox, interval minors have been implicitly used by 
Guillemot and Marx~\cite{gm}, who proved that a permutation matrix $M$ which 
avoids as interval minor a fixed complete square pattern (i.e., a square 
pattern with all entries equal to 1) admits a type of recursive decomposition 
of bounded complexity. This result can be viewed as an analogue of the grid 
theorem from graph theory~\cite{rs}, which states that graphs avoiding a large 
square grid as a minor have bounded tree-width. Guillemot and Marx used their 
result on forbidden interval minors to design a linear-time algorithm for 
testing the containment of a fixed pattern in a permutation.

Subsequent research into interval-minor avoidance has focused on avoiders of a 
complete matrix. In particular, Mohar et al.~\cite{mohar} obtained exact values 
for the extremal function for matrices simultaneously avoiding a complete 
pattern of size $2\times \ell$ and its transpose, and they obtained bounds for 
patterns of size $3\times \ell$. Their results were further generalised by Mao 
et al.~\cite{mao} to a multidimensional setting.

While the functions $\ex(n;P)$ exhibit diverse forms of asymptotic behaviour, 
the function $\exm(n;P)$ is linear for every nontrivial pattern~$P$. This is 
a consequence of the Marcus--Tardos theorem and the fact that any binary matrix 
is an interval minor of a permutation matrix; see Fox~\cite{fox}. Therefore, 
in the interval-minor avoidance setting, it is not as natural to classify 
patterns by the growth of~$\exm(n;P)$ alone as in the submatrix avoidance 
setting. 

In our paper, we instead classify the patterns $P$ based on the structure of 
the $P$-avoiders. We introduce the notion of \emph{line complexity} of a binary 
matrix $M$, as the largest number of maximal runs of consecutive 0-entries in a 
single row or a single column of~$M$. We focus on the \emph{critical} 
$P$-avoiders, which are the matrices that avoid $P$ as interval minor, but lose 
this property when any 0-entry is changed into a 1-entry. 

Our main result is a sharp dichotomy for line complexity of critical 
$P$-avoiders. Let $Q_1,\dotsc,Q_4$ be defined as follows:
\[
Q_1=\smm{ &\bullet& \\\bullet& & \\ & &\bullet},\ Q_2=\smm{ &\bullet& \\ & 
&\bullet\\\bullet& & },\ Q_3=\smm{\bullet& & \\ & &\bullet\\ &\bullet& 
}\text{ and } 
Q_4=\smm{ & &\bullet\\\bullet& & \\ &\bullet& }.
\]
We show that if a pattern $P$ avoids the four patterns $Q_i$ as interval minors 
(or equivalently, as submatrices), then the line-complexity of every critical 
$P$-avoider is bounded by a constant $c_P$ depending only on~$P$. On the other 
hand, if $P$ contains at least one of the $Q_i$, then there are critical 
$P$-avoiders of size $n\times n$ with line complexity $\Omega(n)$, for any~$n$.

After properly introducing our terminology and proving several simple basic 
facts in Section~\ref{sec-prelims}, we devote Section~\ref{sec-bounded} to 
the statement and proof of our main result. In Section~\ref{sec-further}, we 
discuss the possibility of extending our approach to general minor-closed matrix 
classes, and present several open problems.

\section{Preliminaries}\label{sec-prelims}

\paragraph{Basic notation.} For integers $m$ and $n$, we let $[m,n]$ denote the 
set $\{m,m+1,\dotsc,n\}$. We will also use the notation $[m,n)$ for the set 
$[m,n-1]$,  $(m,n]$ for the set $[m+1,n]$, and $[n]$ for $[1,n]$. We will avoid 
using $(m,n)$ for $[m+1,n-1]$, however; instead, we will use the 
notation $(m,n)$ to denote ordered pairs of integers.

We write $\Mat$ for the set of binary matrices with 
$m$ rows and $n$ columns. We will always assume that rows of matrices are 
numbered top-to-bottom starting with 1, that is, the first row is the topmost.

For a matrix $M\in\Mat$, we let $M(i,j)$ denote the value of the entry 
in row $i$ and column $j$ of~$M$. We say that the pair $(i,j)$ is a 
\emph{1-entry} of $M$ if $M(i,j)=1$, otherwise it is a \emph{0-entry}. The set 
of 1-entries of a matrix $M\in\Mat$ is called the \emph{support} of $M$, denoted 
by $\supp(M)$; formally, $\supp(M)=\{(i,j)\in[m]\times[n];\;M(i,j)=1\}$. 

We say that a binary matrix $M'$ \emph{dominates} a binary matrix $M$, if the 
two matrices have the same number of rows and the same number of columns, and 
moreover, $\supp(M)\subseteq\supp(M')$. In other words, $M$ can be obtained 
from $M'$ by changing some 1-entries into 0-entries.

For a matrix $M\in\Mat$ and for a set of row-indices $R\subseteq[m]$ and 
column-indices $C\subseteq[n]$, we let $M[R\times C]$ denote the submatrix of 
$M$ induced by the rows in $R$ and columns in~$C$. More formally, if 
$R=\{r_1<r_2<\dotsb<r_k\}$ and $C=\{c_1<c_2<\dotsb<c_\ell\}$, then $M[R\times 
C]$ is a matrix $P\in\Pat$ such that $P(i,j)=M(r_i,c_j)$ for every 
$(i,j)\in[k]\times[\ell]$. 

A \emph{line} in a matrix $M$ is either a row or a column of $M$. We 
view a line as a special case of a submatrix. For instance, the $i$-th row of 
a matrix $M\in\Mat$ is the submatrix $M[\{i\}\times[n]]$. A \emph{horizontal 
interval} is a submatrix formed by consecutive entries belonging to a single 
row, i.e., a submatrix of the form $M[\{i\}\times[j_1,j_2]\}$ where $i$ is a 
row index and $j_1,j_2$ are column indices. Vertical intervals are defined 
analogously.

We say that a submatrix of $M$ is \emph{empty} if it does not contain any 
1-entries.

For a matrix $M\in\Mat$ and an entry $e\in[m]\times[n]$, we let $M\Delta e$ 
denote the matrix obtained from $M$ by changing the value of the entry $e$ from 
0 to 1 or from 1 to~0.

\paragraph{Interval minors.} A \emph{row contraction} in a matrix $M\in\Mat$ is 
an operation that replaces a pair of adjacent rows $r$ and $r+1$  by a single 
row, so that the new row contains a 1-entry in a column $j$ if and only if at 
least 
one of the two original rows contained a 1-entry in column~$j$. Formally, 
the row contraction transforms $M$ into a matrix $M'\in\bin^{(m-1)\times n}$ 
whose entries are defined by
\[
 M'(i,j)=\begin{cases}
          M(i,j)\text{ if }i<r,\\
\max\{M(r,j),M(r+1,j)\}\text{ if }i=r,\\
M(i+1,j)\text{ if }i>r.
         \end{cases}
\]
A column contraction is defined analogously.

We say that a matrix $P\in\Pat$ is an \emph{interval minor} of a matrix 
$M\in\Mat$, denoted $P\im M$, if we can transform $M$ by a sequence of row 
contractions and column contractions to a matrix $P'\in\Pat$ that dominates~$P$.
When $P$ is an interval minor of $M$, we also say that $M$ \emph{contains} $P$, 
otherwise we say that $M$ \emph{avoids} $P$, or $M$ is $P$-avoiding.

\begin{figure}
 \centerline{\includegraphics{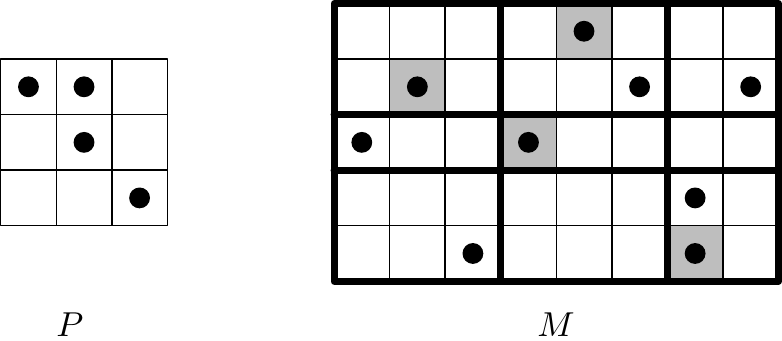}}
\caption{A pattern $P$ and a matrix $M$ that contains $P$. 
The thick lines indicate a partition of $M$ containing $P$, and the shaded 
1-entries form an image of~$P$.}\label{fig-contain}
\end{figure}

There are several alternative ways to define interval minors. One possible 
approach uses the concept of matrix partition. For $P\in\Pat$ and $M\in\Mat$, a 
\emph{partition of $M$ containing $P$} is the sequence of row indices 
$r_0,r_1,\dotsc,r_k$ and column indices $c_0,c_1,\dotsc,c_\ell$ with 
$0\le r_0<r_1<\dotsb<r_k\le m$ and $0\le c_0<c_1<\dotsb<c_\ell\le n$, such that 
for every 1-entry $(i,j)$ of $P$, the submatrix 
$M[(r_{i-1},r_i]\times(c_{j-1},c_j]]$ has at least one 1-entry. See 
Figure~\ref{fig-contain}.

An \emph{embedding} of a matrix $P\in\Pat$ into a matrix $M\in\Mat$ is a 
function $\phi\colon [k]\times[\ell]\to[m]\times[n]$ with the following 
properties:
\begin{itemize}
\item If $e=(i,j)$ is a 1-entry of $P$, then $\phi(e)$ is a 1-entry of 
$M$.
\item Let $e_1=(i_1,j_1)$ and $e_2=(i_2,j_2)$ be two entries of $P$, and 
suppose that $\phi(e_1)=(i^*_1,j^*_1)$ and $\phi(e_2)=(i^*_2,j^*_2)$. If 
$i_1<i_2$ then 
$i^*_1<i^*_2$, and if $j_1<j_2$ then $j^*_1<j^*_2$.
\end{itemize}

Notice that in an embedding $\phi$ of $P$ into $M$, two entries of $P$ 
belonging to the same row may be mapped to different rows of $M$, and similarly 
for columns.

In practice, it is often inconvenient and unnecessary to specify completely an 
embedding of $P$ into~$M$. In particular, it is usually unnecessary to specify 
the image of all the 0-entries in~$P$. This motivates the notion of partial 
embedding, which we now formalise. Consider again binary matrices $P\in\Pat$ and 
$M\in\Mat$. Let $S$ be a nonempty subset of $[k]\times[\ell]$. We say that a 
function $\psi\colon S\to[m]\times[n]$ is a \emph{partial embedding} of $P$ into 
$M$ if the following holds:
\begin{itemize}
\item If $e=(i,j)$ is a 1-entry of $P$, then $e$ is in $S$ and $\psi(e)$ is a 
1-entry of~$M$.
\item An entry $e=(i,j)\in S$ is mapped by $\psi$ to an entry   
$\psi(e)=(i^*,j^*)$ of $M$ satisfying the following inequalities: $i\le i^*$, 
$j\le j^*$, $k-i\le m-i^*$ and $\ell-j\le n-j^*$. Informally, the entry 
$\psi(e)$ is at least as far from the top, left, bottom and right edge of the 
corresponding matrix as the entry~$e$.
\item Let $e_1=(i_1,j_1)$ and $e_2=(i_2,j_2)$ be two entries in $S$, with 
$\psi(e_1)=(i^*_1,j^*_1)$ and $\psi(e_2)=(i^*_2,j^*_2)$. If $i_1< i_2$ then 
$i_2-i_1\le 
i^*_2-i^*_1$, and if $j_1<j_2$ then $j_2-j_1\le j^*_2-j^*_1$.
\end{itemize}

For a partial embedding $\psi$ of a pattern $P$ into a matrix $M$, the 
\emph{image of $P$} (with respect to $\psi$) is the set of entries 
$\{\psi(e);\; e\in\supp(P)\}$ in the matrix~$M$. Note that all the entries in 
the image of $P$ are 1-entries.

\begin{lemma}\label{lem-minor}
For matrices $P\in\Pat$ and $M\in\Mat$ the following properties are equivalent.
\begin{itemize}
\item[1.] $P$ is an interval minor of $M$.
\item[2.] $M$ has a partition containing $P$.
\item[3.] $P$ has an embedding into $M$.
\item[4.] $P$ has a partial embedding into $M$.
\end{itemize}
\end{lemma}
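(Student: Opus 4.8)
The plan is to prove the four equivalences by establishing a cycle of implications, say $1 \Rightarrow 2 \Rightarrow 3 \Rightarrow 4 \Rightarrow 1$, or alternatively to route everything through property~2 since the partition formulation is the most concrete. I would first observe that the implications among $1$, $2$, and $3$ are essentially bookkeeping: a sequence of row and column contractions transforming $M$ to a dominating matrix $P'$ naturally groups the original rows of $M$ into consecutive blocks (the rows that get merged into a single row of $P'$) and likewise the columns, and these blocks are exactly the cut points $r_0 < \dots < r_k$ and $c_0 < \dots < c_\ell$ of a partition; the condition that the contracted matrix dominates $P$ says precisely that each block $M[(r_{i-1},r_i]\times(c_{j-1},c_j]]$ corresponding to a 1-entry $(i,j)$ of $P$ contains a 1-entry. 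This gives $1 \Leftrightarrow 2$. For $2 \Rightarrow 3$, given a partition, for each 1-entry $(i,j)$ of $P$ pick a 1-entry of $M$ inside the block $(r_{i-1},r_i]\times(c_{j-1},c_j]$, and for each 0-entry of $P$ pick any entry in the corresponding block (the corner of the block, say $(r_i, c_j)$, is a convenient canonical choice); one checks the monotonicity conditions of an embedding hold because entries in different blocks respect the block ordering.

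**The partial embedding step.** The substantive direction is $4 \Rightarrow 1$ (equivalently $4 \Rightarrow 2$): we are handed only a partial embedding $\psi$ defined on some $S \supseteq \supp(P)$, satisfying the spacing inequalities, and we must recover an honest partition. The natural approach is to use only the restriction of $\psi$ to $\supp(P)$ and forget $S$ entirely. I would try to build the partition greedily. Think of the 1-entries of $P$ and their images: the spacing conditions guarantee that if two 1-entries of $P$ lie in rows $i_1 < i_2$, their images lie in rows at least $i_2 - i_1$ apart, and an entry in row $i$ has image in a row $i^* \ge i$ with at least $k - i$ rows of $M$ below it. The idea is that these inequalities leave exactly enough "room" to insert the cut points $r_0, \dots, r_k$: for each $i \in [0,k]$ we must choose $r_i$ so that $r_{i-1} < r_i$, so that $r_i \le i^*$ whenever some 1-entry in row $i$ of $P$ has image in row $i^*$, and so that $r_i$ does not exceed the available rows. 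Concretely, I would set $r_i$ to be the minimum over all row-images of 1-entries of $P$ lying in rows $\le i$ of $P$, minus an appropriate offset, or dually process rows top to bottom maintaining the invariant that $r_i \ge i$ and $r_i < $ (image row of any 1-entry in a strictly later row). The spacing inequality $i_2 - i_1 \le i^*_2 - i^*_1$ is exactly what makes such a monotone choice feasible; the boundary conditions $i \le i^*$ and $k - i \le m - i^*$ handle the first and last blocks. The same construction is run independently on columns, and then each 1-entry $(i,j)$ of $P$, with image $(i^*,j^*)$, lands in block $(r_{i-1},r_i] \times (c_{j-1},c_j]$ by construction, giving a valid partition.

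**Main obstacle.** The delicate point is verifying that the greedy choice of cut points is simultaneously consistent with all 1-entries of $P$ — a single row $i$ of $P$ may contain several 1-entries whose images under $\psi$ sit in different rows of $M$, and all of them must fall into the single block $(r_{i-1}, r_i]$. This forces $r_{i-1} < \min_j i^*(i,j)$ and $r_i \ge \max_j i^*(i,j)$ simultaneously, and one has to check these constraints never conflict across consecutive rows of $P$; here is where the full strength of the spacing inequalities (including the two one-sided boundary conditions, not just the pairwise one) is used, and one should be careful that the argument does not secretly need information about $\psi$ on $S \setminus \supp(P)$. I expect a clean way to organize this is to define $r_i = \min\{\, i^* - (\text{something}) : (i',j) \in \supp(P),\ i' > i,\ \psi(i',j) = (i^*, \cdot)\,\}$ pushed as large as the constraints allow, prove by a short induction that $0 \le r_0 < r_1 < \dots < r_k \le m$, and then verify block membership directly. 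Everything else is routine, and the column argument is verbatim the same with rows and columns swapped.
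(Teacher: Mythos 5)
Your proposal matches the paper's argument in all essentials: the easy implications are treated as bookkeeping, and the substantive step is recovering a partition from a partial embedding by choosing cut points $r_i$ that sit weakly above every image-row of entries in row $i$ of $P$ and strictly below every image-row of entries in later rows, with the spacing inequalities guaranteeing this is consistent (the paper simply takes $r_i$ to be the largest image-row among entries of $P$ in row $i$, which realizes the construction you describe somewhat more tentatively). The only cosmetic difference is that the paper proves $1\Rightarrow 3$ by tracking an embedding inductively through the contraction sequence, whereas you go from $2$ directly to $3$ by picking witnesses inside the partition blocks; both are correct and equally routine.
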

\begin{proof}
We will prove the implications $2\implies 1\implies 3\implies 4\implies 2$.

To see that 2  implies 1, suppose $M$ has a partition containing $P$, 
determined by row indices $r_0,r_1,\dotsc,r_k$ and column indices 
$c_0,c_1,\dotsc,c_\ell$, where we may assume that $r_0=c_0=0$, $r_k=m$ and 
$c_\ell=n$. We may then contract the rows from each interval of 
the form $(r_{i-1},r_i]$ into a single row, and contract the columns from each 
interval $(c_{i-1},c_i]$ to a single column, to obtain a matrix $P'\in\Pat$ 
that dominates~$P$.

To see that 1 implies 3, suppose that $P$ is an interval minor of $M$. This 
means that there is a sequence of matrices $M_0, M_1, M_2,\dotsc, M_s$ with 
$s=(m-k)+(n-\ell)$, where $M_0\in\Pat$ is a matrix that dominates $P$, and for 
each $i\in[s]$, the matrix $M_{i-1}$ can be obtained from $M_i$ by contracting 
a pair of adjacent rows or columns. We can then easily observe that for every 
$i=0,1,\dotsc,s$ there is an embedding $\phi_i$ of $P$ into $M_i$. Indeed, 
reasoning by induction, the embedding $\phi_0$ is the identity map, and for a 
given $i\in[s]$, if there is an embedding $\phi_{i-1}$ of $P$ into $M_{i-1}$, 
then an embedding $\phi_i$ can be obtained by an obvious modification of 
$\phi_{i-1}$.

Clearly, 3 implies 4, since every embedding is also a partial embedding.

To show that 4 implies 2, assume that $\psi\colon S\to[m]\times[n]$ is a 
partial embedding of $P$ into~$M$. We will define a sequence of row indices 
$0\le r_0<r_1<\dotsb<r_k\le m$ with these two properties:
\begin{itemize}
 \item For each entry $e\in S$ that belongs to row $i$ of $P$, the entry 
$\psi(e)$ belongs to a row $i^*$ of $M$ for some $i^*\in(r_{i-1},r_i]$.
\item If $S$ contains at least one entry from row $i$ in $P$, then $S$ contains 
an entry $e$ in row $i$ such that $\psi(e)$ is in row $r_i$ of~$M$.
\end{itemize}
We define the numbers $r_i$ inductively, starting with $r_0=0$. Suppose that 
$r_0,\dotsc,r_{i-1}$ have been defined, for some $i\ge 1$. If $S$ contains no 
entry from row $i$ of $P$, define $r_i=r_{i-1}+1$. On the other hand, if $S$ 
contains an entry from row~$i$, we let $r_i$ be the largest row index of $M$ 
such that $\psi$ maps an entry from row $i$ of $P$ to an entry in row~$r_i$ 
of~$M$. Notice that any entry $e\in S$ that does not belong to the first $i$ 
rows of $P$ must be mapped by $\psi$ to an entry strictly below row $r_i$ of 
$M$, otherwise $\psi$ would not satisfy the properties of a partial embedding. 

In an analogous way, we also define a sequence of column indices $0\le 
c_0<c_1<\dotsb<c_{\ell}\le n$. These sequences will satisfy that for every 
$e=(i,j)\in S$ we have $\psi(e)\in (r_{i-1},r_i]\times(c_{j-1},c_j]$. Since 
$\psi$ is a partial embedding, $S$ contains all the 1-entries of $P$, and 
$\psi$ maps these 1-entries to 1-entries of~$M$. In particular, the sequences 
$(r_i)_{i=0}^k$ and $(c_j)_{j=0}^\ell$ form a partition of $M$ containing~$P$.
\end{proof}

\paragraph{Minor-closed classes.} 
For a matrix $P$, we let $\Avm{P}$ denote the set of all binary matrices that 
do not contain $P$ as an interval minor. We call the matrices in $\Avm{P}$ the 
\emph{avoiders} of~$P$, or \emph{$P$-avoiders}. 

More generally, if $\class{F}$ is a set of matrices, we let $\Avm{\class{F}}$ 
denote the set of binary matrices that avoid all elements of $\class{F}$ as 
interval minors.

We call a set $\class{C}$ of binary matrices a \emph{minor-closed class} (or 
just \emph{class}, for short) if for every matrix $M\in\class{C}$, all the 
interval minors of $M$ are in $\class{C}$ as well. Clearly, $\Avm{\class{F}}$ 
is a class, and for every class $\class{C}$ there is a (possibly infinite) set 
$\class{F}$ such that $\class{C}=\Avm{\class{F}}$. A \emph{principal class} is 
a class of matrices determined by a single forbidden pattern, i.e., a class of 
the form $\Avm{P}$ for a matrix~$P$.

For a class $\class{C}$ of matrices, we say that a matrix $M\in\class{C}$ is 
\emph{critical for $\class{C}$} if the change of any 0-entry of $M$ to a 
1-entry creates a matrix that does not belong to~$\class{C}$. In other words, 
$M\in\class{C}$ is critical for $\class{C}$ if it is not dominated by any other 
matrix in~$\class{C}$. For a pattern $P$, we let $\Avmax{P}$ be the set of 
critical matrices for~$\Avm{P}$, and similarly for a set of patterns $\cF$, 
$\Avmax{\class{F}}$ is the set of all critical matrices for~$\Avm{\class{F}}$.

\subsection{Simple examples of $P$-avoiders}\label{ssec-example}

We conclude this section by presenting several examples of avoiders of 
certain simple patterns. These examples will play a role in 
Section~\ref{sec-bounded}, in the proof of our main result.
We begin with a very simple example, which we present without proof.

\begin{obs}\label{obs-Rk}
Let $R_k$ be the matrix with 1 row and $k$ columns, whose every entry is a 
1-entry (see Figure~\ref{fig-rkdk}). A matrix $M\in\Mat$ avoids $R_k$ if and 
only 
if $M$ has at most $k-1$ 
nonempty columns. Consequently, $M$ is a critical $R_k$-avoider if and only if 
$\supp(M)$ is a union of $\min\{k-1,n\}$ columns.
\end{obs}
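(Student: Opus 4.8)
The plan is to use the partition characterisation of interval minors from Lemma~\ref{lem-minor}. Since $R_k$ has a single row, a partition of $M$ containing $R_k$ consists of the trivial row partition $0=r_0<r_1=m$ together with column indices $0=c_0<c_1<\dotsb<c_k=n$ such that every block $M[[m]\times(c_{j-1},c_j]]$ contains a 1-entry. Hence $R_k\im M$ holds if and only if the columns of $M$ can be split into $k$ consecutive nonempty blocks. I would then argue that such a splitting exists precisely when $M$ has at least $k$ nonempty columns. For the forward direction, each of the $k$ blocks must contain a nonempty column of $M$, and distinct blocks contain distinct columns, so $M$ has at least $k$ nonempty columns. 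For the converse, if $a_1<\dotsb<a_t$ are the nonempty columns of $M$ with $t\ge k$, I would set $c_j=a_j$ for $j<k$ and $c_k=n$; then block $j$ contains column $a_j$ for each $j<k$, and block $k$ contains $a_k$. This yields that $M$ avoids $R_k$ if and only if $M$ has at most $k-1$ nonempty columns.

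For the criticality statement, I would first note that flipping a single $0$-entry of $M$ to a $1$ either leaves the set of nonempty columns unchanged (if the entry lies in an already-nonempty column) or adds exactly one new nonempty column (if it lies in an empty column). Suppose $M$ is critical for $\Avm{R_k}$. Then $M$ has at most $k-1$ nonempty columns, and flipping any $0$-entry must raise this count to at least $k$; so every $0$-entry of $M$ lies in an empty column — equivalently, every nonempty column of $M$ is entirely filled with $1$-entries — and $M$ has exactly $k-1$ nonempty columns, unless $M$ has no $0$-entry at all, which forces $n\le k-1$ and $M$ to be the all-ones matrix. In either case $\supp(M)$ is a union of $\min\{k-1,n\}$ full columns. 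Conversely, if $\supp(M)$ is a union of $\min\{k-1,n\}$ full columns, then $M$ avoids $R_k$; when $n\ge k$ every $0$-entry lies in one of the $n-(k-1)\ge 1$ empty columns, so flipping it produces $k$ nonempty columns and hence a copy of $R_k$, while when $n\le k-1$ the matrix $M$ has no $0$-entry and is vacuously critical.

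\textbf{Main obstacle.} There is essentially no difficulty here, which is why the paper presents the statement without proof; the only point needing care is the degenerate regime $n\le k-1$, where the candidate critical avoider is the all-ones matrix and criticality holds vacuously — this is exactly why the count in the conclusion is $\min\{k-1,n\}$ rather than $k-1$.
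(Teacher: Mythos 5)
Your proof is correct and complete, including the careful handling of the degenerate case $n\le k-1$ where the unique critical avoider is the all-ones matrix. The paper deliberately states Observation~\ref{obs-Rk} without proof, so there is no argument in the text to compare against; your partition-based derivation of the nonempty-column criterion via Lemma~\ref{lem-minor}, followed by the elementary case analysis on whether a flipped $0$-entry sits in an already-nonempty column, is a perfectly reasonable way to fill in the omitted details.
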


\begin{figure}
\centerline{\includegraphics{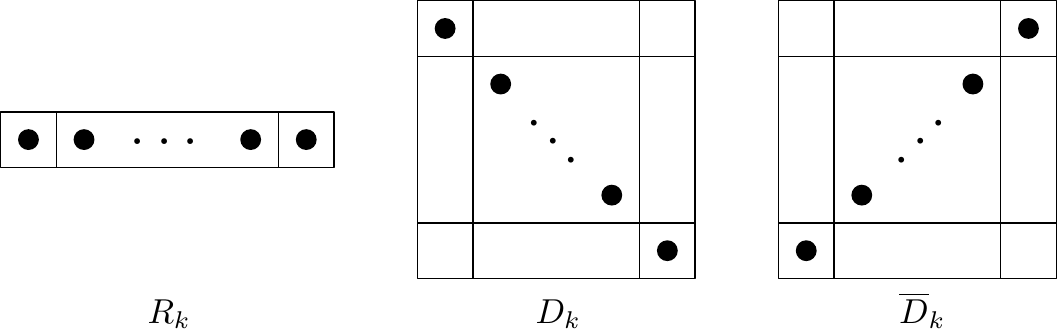}}
 \caption{The patterns $R_k$, $D_k$ and $\ovD_k$.}\label{fig-rkdk}
\end{figure}

Next, we will consider the diagonal patterns $D_k\in\bin^{k\times k}$, defined 
by $\supp(D_k)=\{(i,i);\; i\in[k]\}$, and their mirror image $\ovD_k\in 
\bin^{k\times k}$, defined by $\supp(\ovD_k)=\{(i,k-i+1);\; i\in[k]\}$ (see 
again Figure~\ref{fig-rkdk}). To describe the avoiders of these patterns, we 
first introduce some terminology.

Let $e$ and $e'$ be two entries of a matrix~$M$.
An \emph{increasing walk} from $e$ to $e'$ in $M$ is a set of entries 
$W=\{e_i=(r_i,c_i);\; i=0,\dotsc,t\}$ such that $e_0=e$, $e_t=e'$, and for 
every $i\in[t]$ we have either $r_i=r_{i-1}$ and $c_i=c_{i-1}+1$ (that is, 
$e_i$ is to the right of $e_{i-1}$), or $r_i=r_{i-1}-1$ and $c_i=c_{i-1}$ (that 
is, $e_i$ is above $e_{i-1}$). A \emph{decreasing walk} is defined analogously, 
except now $e_i$ is either to the right or below~$e_{i-1}$.

We say a matrix~$M$ is an \emph{increasing matrix} if $\supp(M)$ is a subset of
an increasing walk. A \emph{decreasing matrix} is defined analogously. See 
Figure~\ref{fig-walk}.

\begin{figure}
 \centerline{\includegraphics[width=0.9\textwidth]{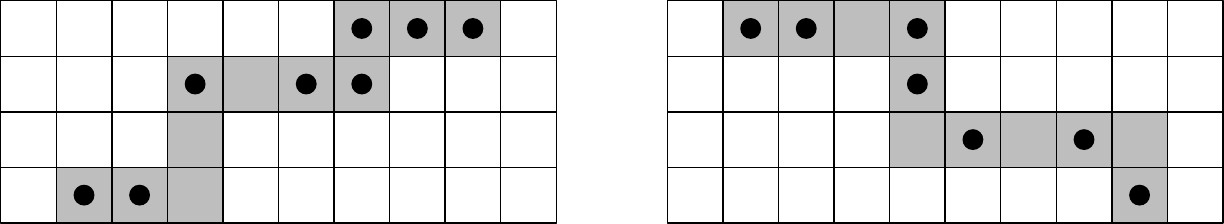}}
\caption{An increasing matrix (left) and a decreasing matrix (right). The 
shaded entries form an increasing and a decreasing walk in the respective 
matrices.}\label{fig-walk}
\end{figure}

\begin{prop}\label{pro-diag}
A matrix $M\in\Mat$ avoids the pattern $D_k$ if and only if $M$ contains $k-1$ 
increasing walks $W_1,\dotsc,W_{k-1}$ from $(m,1)$ to $(1,n)$ such that 
\[\supp(M)\subseteq W_1\cup W_2\cup\dotsb\cup W_{k-1}.\]
\end{prop}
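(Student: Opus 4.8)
The statement is an "if and only if" characterizing $D_k$-avoiders via coverings by $k-1$ increasing walks, so I will prove the two directions separately. The easy direction is the "if": if $\supp(M)$ is covered by $k-1$ increasing walks from $(m,1)$ to $(1,n)$, then $M$ avoids $D_k$. The hard direction is the "only if": from $D_k$-avoidance I must actually *construct* the $k-1$ walks. My overall strategy for that direction is to assign to each 1-entry of $M$ a "color" (a number in $[1,k-1]$) so that each color class lies on a single increasing walk, and then fatten each color class into a full increasing walk from corner to corner.

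**The "if" direction.** Suppose $\supp(M) \subseteq W_1 \cup \dots \cup W_{k-1}$ with each $W_i$ an increasing walk from $(m,1)$ to $(1,n)$. The key observation is that an increasing walk is "antichain-like" in the following sense: if $e=(r,c)$ and $e'=(r',c')$ are two distinct entries on the same increasing walk, then we cannot have both $r > r'$ and $c < c'$ — that is, two entries on one increasing walk never form the relative position of two consecutive 1-entries on the diagonal $D_k$ (which needs strictly-increasing column and strictly-decreasing row). Now suppose for contradiction that $D_k \im M$; by Lemma~\ref{lem-minor} there is a partial embedding $\psi$ of $D_k$ into $M$, giving 1-entries $f_1, \dots, f_k$ of $M$ with $f_i = (r_i^*, c_i^*)$ where (because the 1-entries of $D_k$ sit at $(i,i)$ and the embedding respects the partial-order structure) $r_1^* > r_2^* > \dots > r_k^*$ and $c_1^* < c_2^* < \dots < c_k^*$. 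Wait — I should double-check orientation: the $i$-th 1-entry of $D_k$ is at row $i$, column $i$, so larger index means lower row and righter column, hence $r_1^* < \dots < r_k^*$ and $c_1^* < \dots < c_k^*$; this is the increasing, not decreasing, configuration. So I need the complementary observation: no increasing walk contains two entries $(r,c), (r',c')$ with $r<r'$ and $c<c'$. That is exactly true, since on an increasing walk moving from one entry to a later one you either go right (column up, row same) or up (row down, column same), so you can never simultaneously increase both coordinates. Thus each $W_i$ contains at most one of $f_1,\dots,f_k$, so $k$ entries need $\ge k$ walks, contradicting that there are only $k-1$. Hence $M$ avoids $D_k$.

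**The "only if" direction.** Assume $M$ avoids $D_k$. For each 1-entry $e=(r,c)$ of $M$, define $\chi(e)$ to be the largest $t$ such that there is an increasing-configuration chain $e_1, \dots, e_t = e$ of 1-entries of $M$, i.e. $r_1 < r_2 < \dots < r_t$ and $c_1 < c_2 < \dots < c_t$ with each $e_j \in \supp(M)$ and $e_t = e$. Since $M$ avoids $D_k$, such a chain has length at most $k-1$ (a chain of length $k$ would directly give an embedding of $D_k$ by the partial-embedding criterion — here one checks the boundary-distance conditions of Lemma~\ref{lem-minor}, which hold because a chain of $k$ entries with both coordinates strictly increasing automatically leaves room above/below/left/right), so $\chi\colon \supp(M) \to [1,k-1]$. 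The crucial claim is: for each value $t$, the set $\chi^{-1}(t)$ is an antichain in the "increasing" order, meaning no two of its entries $e=(r,c), e'=(r',c')$ satisfy $r<r'$ and $c<c'$ — because if they did, we could extend a longest chain ending at $e$ by appending $e'$, giving $\chi(e') \ge \chi(e)+1 > t$. Now an antichain in the increasing order is a "staircase going down-left to up-right is forbidden", so its entries can be linearly ordered so that rows are non-increasing while columns are non-decreasing; concretely, sort $\chi^{-1}(t)$ by column (breaking ties — there can be at most... hmm, ties in column are allowed in an antichain only if rows are equal too, i.e. same entry, so columns are distinct), then rows come out weakly decreasing. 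Such a sequence of entries can be threaded by a single increasing walk: between consecutive entries $(r,c)$ and $(r',c')$ with $c<c'$ and $r \ge r'$, walk right from column $c$ to column $c'$ along row $r$, then up from row $r$ to row $r'$ along column $c'$; prepend a walk from $(m,1)$ to the first entry (go up then... careful: from $(m,1)$ I need to reach $(r_1,c_1)$ by an increasing walk, i.e. only right-and-up moves: go right along row $m$ to column $c_1$, then up to row $r_1$ — valid since $r_1 \le m$), and append a walk from the last entry to $(1,n)$ similarly. This yields increasing walks $W_1, \dots, W_{k-1}$ from $(m,1)$ to $(1,n)$ with $\chi^{-1}(t) \subseteq W_t$, hence $\supp(M) \subseteq W_1 \cup \dots \cup W_{k-1}$.

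**Main obstacle.** The routine-but-fiddly part is the bookkeeping for the partial-embedding boundary conditions in Lemma~\ref{lem-minor}: I must verify that a strictly-increasing chain of length $k$ of 1-entries of $M$ genuinely certifies $D_k \im M$ (the conditions $i \le i^*$, $j\le j^*$, $k-i \le m-i^*$, $\ell - j \le n - j^*$ must be checked, and they do hold because a strictly increasing chain of $k$ entries forces the $i$-th to have row and column at least $i$ and at most $m-(k-i)$ and $n-(k-i)$ respectively). The other delicate point is to make sure the concatenated walk for each color class is genuinely an increasing walk (only right/up steps, no backtracking), which relies precisely on the antichain property giving a column-increasing, row-decreasing ordering. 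Neither step is deep, but both need to be stated carefully; everything else is straightforward.
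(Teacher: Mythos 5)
Your proof is correct and follows essentially the same route as the paper's. The forward direction is identical (any $k$ 1-entries witnessing $D_k\im M$ form a strictly increasing configuration, and an increasing walk contains at most one entry of such a configuration, so $k-1$ walks cannot cover them). For the converse, the paper defines the same order $(i,j)\triangleleft(i',j')\iff i<i'\text{ and }j<j'$, notes there is no chain of length $k$, and invokes the Dilworth/Mirsky theorem to split $\supp(M)$ into $k-1$ antichains, each threadable by an increasing walk; your $\chi$-coloring by longest-chain-length-ending-here is precisely the textbook proof of Mirsky's theorem, so you are reproving the black box rather than citing it. There is one small inaccuracy worth flagging: you assert that two distinct entries of a single color class $\chi^{-1}(t)$ cannot share a column. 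That is false — two entries $(r,c)$ and $(r',c)$ in the same column are incomparable under $\triangleleft$, so they may well belong to the same antichain (and to the same color class, since adding one after the other does not extend a $\triangleleft$-chain). Fortunately this does not break the construction: an increasing walk is allowed to climb through several consecutive entries of a single column, so when you sort $\chi^{-1}(t)$ by column you simply resolve ties by decreasing row, and the resulting sequence still has weakly decreasing rows and weakly increasing columns, hence is still threaded by a single increasing walk. Your boundary bookkeeping for the partial-embedding criterion is correct.
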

\begin{proof}
Clearly, if $M$ contains $D_k$, then $M$ has $k$ 1-entries no two of which can 
belong to a single increasing walk, and therefore $\supp(M)$ cannot be covered 
by $k-1$ increasing walks.

Suppose now that $M$ avoids~$D_k$. Consider a partial order $\triangleleft$ on 
the set $\supp(M)$, defined as $(i,j)\triangleleft(i',j')\iff i<i'$ and $j<j'$. 
Since $M$ avoids $D_k$, this order has no chain of length~$k$. By the classical 
Dilworth theorem~\cite{dilworth}, $\supp(M)$ is a union of $k-1$ antichains 
of~$\triangleleft$. We may easily observe that each antichain of $\triangleleft$ 
is contained in an increasing walk.
\end{proof}
Proposition~\ref{pro-diag} shows, in particular, that a matrix $M$ avoids the 
pattern $D_2=\smm{\bullet& \\ &\bullet}$ if and only if $M$ is an increasing 
matrix. By symmetry, $M$ avoids $\ovD_2$ if and only if it is a decreasing 
matrix.

Another direct consequence of the proposition is the following 
corollary, describing the structure of critical $D_k$-avoiders. 
\begin{cor}\label{cor-diag}
A critical $D_k$-avoiding matrix $M$ contains $k-1$ increasing  
walks $W_1,\dotsc,W_{k-1}$ from $(m,1)$ to $(1,n)$ such that 
 $\supp(M)= W_1\cup W_2\cup\dotsb\cup W_{k-1}$.  
\end{cor}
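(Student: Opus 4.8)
The plan is to derive the corollary from Proposition~\ref{pro-diag} by a single criticality step, with no new combinatorial work. Since $M$ avoids $D_k$, the ``only if'' direction of Proposition~\ref{pro-diag} already supplies $k-1$ increasing walks $W_1,\dotsc,W_{k-1}$, each running from $(m,1)$ to $(1,n)$, with $\supp(M)\subseteq W_1\cup\dotsb\cup W_{k-1}$. The only thing left is to upgrade this inclusion to an equality, and this is exactly where criticality is used.

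First I would let $M'$ be the binary matrix on the same $m\times n$ grid with $\supp(M')=W_1\cup\dotsb\cup W_{k-1}$. By construction $\supp(M)\subseteq\supp(M')$, so $M'$ dominates $M$, i.e.\ $M'$ is obtained from $M$ by changing some $0$-entries to $1$-entries. Next I would observe that $M'$ still avoids $D_k$: its support is covered by the $k-1$ increasing walks $W_1,\dotsc,W_{k-1}$ from $(m,1)$ to $(1,n)$, so $M'\in\Avm{D_k}$ by the ``if'' direction of Proposition~\ref{pro-diag} (equivalently, by the contrapositive of its first sentence, which says that a matrix containing $D_k$ cannot have its support covered by $k-1$ increasing walks). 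Finally, since $M$ is critical for $\Avm{D_k}$, it is not dominated by any matrix in $\Avm{D_k}$ other than itself; as $M'$ dominates $M$ and lies in $\Avm{D_k}$, we get $M'=M$, that is, $\supp(M)=W_1\cup\dotsb\cup W_{k-1}$, as claimed.

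I do not expect a genuine obstacle in this argument; the only point deserving a moment's thought is the verification that a union of $k-1$ corner-to-corner increasing walks is $D_k$-avoiding, and this is already contained in (the easy direction of) Proposition~\ref{pro-diag}: a partition witnessing $D_k\im M'$ would select one $1$-entry from each of $k$ diagonal blocks, producing $k$ entries that are pairwise comparable in the strict order $(i,j)\triangleleft(i',j')\iff i<i'$ and $j<j'$, so by pigeonhole two of them would lie on a common increasing walk; but consecutive entries of an increasing walk are weakly monotone (the row index is non-increasing and the column index is non-decreasing), hence no two entries of such a walk are $\triangleleft$-comparable, a contradiction.
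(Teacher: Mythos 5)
Your argument is correct and is exactly the "direct consequence" the paper has in mind: apply the only-if direction of Proposition~\ref{pro-diag} to get walks covering $\supp(M)$, observe by the if direction that the matrix whose support is precisely the union of these walks still avoids $D_k$, and invoke criticality to conclude equality. The closing paragraph re-deriving the easy direction is unnecessary (the proposition already provides it) but harmless.
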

Note that Corollary~\ref{cor-diag} only gives a necessary condition for a matrix 
to be a critical $D_k$-avoider, therefore it is not a characterisation of 
critical $D_k$-avoiders. With only a little bit of extra effort, we could state 
and prove such a characterisation, but we omit doing so, as we do not need it 
for our purposes.

A simple but useful observation is that adding an empty row or column to the 
boundary of a pattern affects the $P$-avoiders in a predictable way. We state 
it here without proof.

\begin{obs}\label{obs-empty} Let $P\in\Pat$ be a pattern, and let 
$P'\in\bin^{k\times(\ell+1)}$ be the pattern obtained by appending an 
empty column to~$P$; in other words, we have $P'[[k]\times[\ell]]=P$, and the 
last column of $P'$ is empty. Then a matrix $M\in\Mat$ avoids $P'$ if and only 
if the matrix obtained by removing the last column from $M$ avoids~$P$. 
Consequently, $M$ is in $\Avmax{P'}$ if and only if all the entries in the last 
column of $M$ are 1-entries, and the preceding columns form a matrix from 
$\Avmax{P}$. Analogous properties hold for a pattern $P''$ obtained by 
prepending an empty column in front of all the columns of $P$, and also for rows 
instead of columns.
\end{obs}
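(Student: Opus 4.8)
The plan is to reduce everything to the partition characterisation of containment (Lemma~\ref{lem-minor}, item~2) together with the trivial remark that an empty column of $P'$ imposes no condition on a partition. Throughout, let $M_0$ denote the matrix obtained from $M\in\Mat$ by deleting its last column, so $M_0=M[[m]\times[n-1]]\in\bin^{m\times(n-1)}$.

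First I would establish the equivalence $P'\im M\iff P\im M_0$. In one direction, given a partition of $M_0$ containing $P$ with column indices $0\le c_0<\dots<c_\ell\le n-1$, I append $c_{\ell+1}=n$; since the last column of $P'$ is empty, no 1-entry of $P'$ lies in column $\ell+1$, so the enlarged sequence, together with the original row indices, is a partition of $M$ containing $P'$. Conversely, given a partition of $M$ containing $P'$ with column indices $0\le c_0<\dots<c_{\ell+1}\le n$, necessarily $c_\ell\le n-1$, and deleting $c_{\ell+1}$ leaves column indices in $[0,n-1]$; as every 1-entry of $P'$ lies in one of its first $\ell$ columns, the resulting conditions are exactly those defining a partition of $M_0$ containing $P$. (In the degenerate range $n\le\ell$ there is nothing to check, since both $M$ and $M_0$ then avoid the respective patterns vacuously.) This proves the first assertion.

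The second assertion follows from the observation implicit in the above: whether $M$ avoids $P'$ depends only on $M_0$, hence not at all on the entries in the last column of $M$. Thus if $M\in\Avmax{P'}$ and some entry $e$ of its last column were a 0-entry, then $M\Delta e$ would agree with $M$ outside the last column, so $M\Delta e$ would still avoid $P'$, contradicting criticality; hence the last column of $M$ consists entirely of 1-entries. Consequently every 0-entry $e$ of $M$ is a 0-entry of $M_0$, and deleting the last column of $M\Delta e$ yields $M_0\Delta e$. Applying the first assertion to $M$ and to each such $M\Delta e$ gives: $M$ avoids $P'$ iff $M_0$ avoids $P$, and $M\Delta e$ contains $P'$ iff $M_0\Delta e$ contains $P$. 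Therefore $M$ is critical for $\Avm{P'}$ exactly when $M_0$ is critical for $\Avm{P}$, which is the claimed characterisation. The variants for prepending a column, and for appending or prepending an empty row, follow by applying this case to the left--right reflection of $M$, and to the transpose of $M$, since interval-minor containment is invariant under both operations.

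The argument is essentially routine; the only points demanding a little care are the index bookkeeping in the partitions — in particular checking that the ranges match up, and handling the small cases $n\le\ell$ where the patterns are vacuously avoided — and isolating cleanly the key structural fact that $P'$-avoidance of $M$ is insensitive to the last column of $M$, which is precisely what forces that column to be full of 1-entries in a critical avoider and makes the criticality equivalence go through.
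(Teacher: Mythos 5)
Your proof is correct. The paper states Observation~\ref{obs-empty} explicitly without proof, so there is no authorial argument to compare against; your route through the partition characterisation of Lemma~\ref{lem-minor} is the natural one, and the index bookkeeping (including the strictness $c_\ell < c_{\ell+1}=n$ and the vacuous range $n\le\ell$) checks out. One small expository remark: the final sentence of your criticality argument reads as if $M\in\Avmax{P'}$ were equivalent to $M_0\in\Avmax{P}$ outright, but that equivalence only holds \emph{given} that the last column of $M$ is all 1-entries; you do establish this as a necessary condition and it is the standing assumption in the backward direction, so the logic is sound, but it would be cleaner to phrase the conclusion explicitly as the two-part characterisation in the statement.
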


\section{Line complexity}
\label{sec-bounded}
In the previous section, we have seen several examples of matrices avoiding a 
fixed pattern as interval minor. At a glance, it is clear that these matrices 
are highly structured. We would now like to make the notion of `highly 
structured matrices' rigorous, and generalize it to other forbidden patterns.

We will focus on the local structure of matrices, i.e., the structure observed 
by looking at a single row or column. For a forbidden pattern $P$ with at least 
two rows and two columns, it is not hard to see that any binary vector can 
appear as a row or column of a $P$-avoiding matrix. 

However, the situation changes when we restrict our attention to critical 
$P$-avoiders. In the examples of critical $P$-avoiders we saw in 
Subsection~\ref{ssec-example}, the 1-entries in each row or column were 
clustered into a bounded number of intervals. In particular, for these patterns 
$P$, there are only at most polynomially many vectors of a given length that may 
appear as rows or columns of a critical $P$-avoider.

In this section, we study this phenomenon in detail. We show that it generalizes 
to many other forbidden patterns $P$, but not all of them. As our main result, 
we will present a complete characterisation of the patterns $P$ exhibiting this 
phenomenon.

Let us begin by formalising our main concepts.

A \emph{horizontal 0-run} in a matrix $M$ is a maximal sequence of consecutive 
0-entries in a single row. More formally, a horizontal interval 
$M[\{r\}\times[c_1,c_2]]$ is a \emph{horizontal 0-run} if all its entries are 
0-entries, $c_1=1$ or $M(r,c_1-1)=1$, and $c_2=n$ or $M(r,c_2+1)=1$. 
Symmetrically, a vertical interval is a \emph{vertical 0-run} if it is a maximal 
vertical interval that only contains 0-entries. In the same manner, we define a 
(horizontal or vertical) \emph{1-run} to be a maximal interval of consecutive 
1-entries in a single line of~$M$.

Note that each line in a matrix $M$ can be uniquely decomposed into an 
alternating sequence of 0-runs and 1-runs.

Let $M$ be a binary matrix. The \emph{complexity} of a line of $M$ is the 
number of 0-runs contained in this line. The \emph{row-complexity} of $M$ is 
the maximum complexity of a row of $M$, i.e., the least number $k$ such that 
each row has complexity at most~$k$. Similarly, the \emph{column-complexity} of 
$M$ is the maximum complexity of a column of~$M$.

For a class of matrices $\class{C}$, we define its \emph{row-complexity}, 
denoted $\rc{\class{C}}$, as the supremum of the row-complexities of the 
critical matrices in~$\class{C}$. We say that $\class{C}$ is \emph{row-bounded} 
if $\rc{\class{C}}$ is finite, and \emph{row-unbounded} otherwise. 
Symmetrically, we define the \emph{column-complexity}~$\cc{\class{C}}$ of 
$\class{C}$ and the property of being \emph{column-bounded} and 
\emph{column-unbounded}. We say that a class $\class{C}$ is \emph{bounded} if it 
is both row-bounded and column-bounded; otherwise, it is \emph{unbounded}.

We stress that when defining the row-complexity and column-complexity of a class 
of matrices, we only take into account the matrices that are critical for the 
class. 

We are now ready to state our main result.

\begin{thm}\label{thm-main} Let $P$ be a pattern.
The class $\Avm{P}$ is row-bounded if and only if $P$ does not contain any of 
$Q_1, Q_2, Q_3, Q_4$ as an interval minor, where
\[
Q_1=\smm{ &\bullet& \\\bullet& & \\ & &\bullet},\ Q_2=\smm{ &\bullet& \\ & 
&\bullet\\\bullet& & },\ Q_3=\smm{\bullet& & \\ & &\bullet\\ &\bullet& 
}\text{ and } 
Q_4=\smm{ & &\bullet\\\bullet& & \\ &\bullet& }.
\]
\end{thm}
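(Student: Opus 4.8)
The plan is to split the theorem into its two implications and prove them separately. For the negative direction, suppose $P$ contains some $Q_i$ as an interval minor; by symmetry (reflecting rows and/or columns, which permutes the $Q_i$ among themselves) it suffices to treat one of them, say $Q_1 = \smm{&\bullet&\\\bullet&&\\&&\bullet}$. I would first reduce to the case $P = Q_1$ itself: if $P$ contains $Q_1$, then $\Avm{Q_1}\subseteq\Avm{P}$, and critical $Q_1$-avoiders of high row-complexity can be ``padded'' into critical $P$-avoiders of high row-complexity (appending all-1 rows/columns on the boundary, as in Observation~\ref{obs-empty}, and more generally embedding a critical $Q_1$-avoider into a larger critical $P$-avoider whose extra 1-entries do not create a copy of $P$). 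Then I would exhibit, for each $n$, an explicit critical $Q_1$-avoider of size roughly $n\times n$ with a single row carrying $\Omega(n)$ alternating runs. The natural candidate is a matrix whose support consists of a long antidiagonal staircase together with one designated row that alternates between long 1-blocks sitting under the ``low'' part of the staircase and 0-blocks elsewhere; one checks that inserting any missing 1 creates the ``up-down-up'' pattern of $Q_1$, while the matrix itself does not. Verifying criticality is the fiddly part here, but it is a finite local check at each 0-entry.

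For the positive direction, assume $P$ avoids all four $Q_i$ (equivalently, as the excerpt notes, avoids them as submatrices, since the $Q_i$ are permutation matrices). The first step is a structural classification of such $P$: a $3\times 3$ permutation matrix is one of the six permutations of $[3]$, and the $Q_i$ are exactly the four that are neither the identity $D_3$ nor the antidiagonal $\ovD_3$. So a permutation matrix avoiding all $Q_i$ as a submatrix contains no $3$-element set that is ``not monotone'', which forces its support to be covered by one increasing and one decreasing walk — but $P$ need not be a permutation matrix. The correct statement I would aim for is: $P$ avoids all $Q_i$ iff $\supp(P)$ can be partitioned into an increasing walk and a decreasing walk (equivalently $P \im$ some ``double staircase'' pattern), mirroring how $D_2$-avoidance means ``increasing matrix'' in Proposition~\ref{pro-diag}. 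This is proved by a Dilworth/ forbidden-configuration argument on the partial order from that proposition, checking that the only obstructions to a $2$-walk cover are precisely the $Q_i$.

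Given that classification, it then suffices to bound $\rc{\Avm{P}}$ when $P$ is such a ``double-staircase'' pattern, and here I would induct on the size of $P$, peeling off boundary rows/columns via Observation~\ref{obs-empty} and the structural descriptions of avoiders of the small patterns $\smm{\bullet&\bullet\\\bullet&}$, $\smm{\bullet&&\bullet\\&\bullet&}$, $\smm{\bullet&\bullet&\bullet\\&\bullet&}$, $\smm{\bullet&\bullet\\\bullet&\bullet}$ from Subsection~\ref{ssec-example} as base cases. The inductive step is where the real work lies: in a critical $P$-avoider $M$, one wants to locate a bounded number of rows and columns that ``cut'' $M$ into regions, each of which is a critical avoider of a strictly smaller double-staircase pattern (so that each row of $M$ meets only boundedly many such regions, each contributing boundedly many runs). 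Making this decomposition uniform — i.e.\ with the number of cuts bounded by a function of $P$ alone, not of $M$ — is the main obstacle, and I expect it to require a careful analysis of how the increasing part and decreasing part of (an embedding of) $P$ interact, analogous to but considerably heavier than the case analyses in the commented-out Propositions on $\Pa,\Pb,\Pc,\Pd$. The constant $c_P$ then comes out of the recursion, presumably of tower or at least exponential type in $|P|$.
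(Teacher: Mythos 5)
The key structural claim underpinning your positive direction is false. You propose the characterisation ``$P$ avoids all $Q_i$ iff $\supp(P)$ can be partitioned into an increasing walk and a decreasing walk,'' but $Q_1$ itself refutes it: $\supp(Q_1)=\{(1,2),(2,1),(3,3)\}$ is covered by the increasing walk $(2,1),(1,1),(1,2)$ together with the one-point decreasing walk $\{(3,3)\}$, yet trivially $Q_1\im Q_1$. The class of patterns whose support splits into an increasing and a decreasing walk is strictly larger than $\Avm{\rbpat}$, so the proposed ``Dilworth / forbidden-configuration'' lemma cannot be proved. What the paper establishes (Proposition~\ref{pro:boundedints}) is only a one-sided implication: every $P\in\Avm{\rbpat}$ avoids $D_2$, or avoids $\ovD_2$, or is covered by three lines, and the three-line case is then narrowed to two explicit shapes up to symmetry in Lemma~\ref{lem-2types}. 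Your proposed induction by peeling off boundary rows or columns of $P$ is also not viable as sketched, since Observation~\ref{obs-empty} only peels \emph{empty} boundary lines; the paper instead introduces the notion of a \emph{row-bounding} 1-entry, proves several local sufficient criteria for it (Lemmas~\ref{lem-leftmost} through~\ref{lem-I2}), and verifies that every 1-entry of every $\rbpat$-avoiding pattern falls under at least one criterion. There is no recursion on $P$ and no tower-type blow-up.

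In the negative direction, the reduction to $P=Q_1$ is the gap. Knowing $\Avm{Q_1}\subseteq\Avm{P}$ does not produce a critical $P$-avoider of high row-complexity: a critical $Q_1$-avoider $M$ with a bad row need not be critical for $\Avm{P}$, and a greedy completion to some $M^+\in\Avmax{P}$ can simply fill that row in. Your phrase ``embedding a critical $Q_1$-avoider into a larger critical $P$-avoider whose extra 1-entries do not create a copy of $P$'' is precisely the missing argument, and it is not a routine padding in the sense of Observation~\ref{obs-empty} because a general $P$ containing $Q_1$ is not $Q_1$ surrounded by empty lines. The paper's Theorem~\ref{thm-unbound} does not reduce to $Q_1$ at all; it builds $M$ directly from $P$, with an all-1 frame and an anti-diagonal arrangement of $(r_3-r_1)\times(c_3-c_1+1)$ all-1 blocks whose dimensions are calibrated to the size of $P$ and to the position of a fixed $Q_1$-image inside it. The frame then forces any $P$-avoiding completion to leave the alternating row unchanged, because turning on any of its 0-entries creates a full $k\times\ell$ all-1 submatrix. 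Your picture of the construction (anti-diagonal staircase plus one alternating row) is essentially correct for $P=Q_1$, but without the $P$-dependent frame and block sizing the argument does not extend to general $P$.
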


Before we prove Theorem~\ref{thm-main}, we point out two of its 
direct consequences.

\begin{cor}\label{cor-first}
For a pattern $P$, these statements are equivalent:
\begin{itemize}
\item $\Avm{P}$ is row-bounded.
\item $\Avm{P}$ is column-bounded.
\item $\Avm{P}$ is bounded.
\end{itemize}
\end{cor}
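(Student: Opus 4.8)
The plan is to derive Corollary~\ref{cor-first} from Theorem~\ref{thm-main} together with an obvious transposition symmetry. First I would observe that the whole setup---interval minors, domination, criticality, and 0-runs---is invariant under transposing matrices: if $M^\top$ denotes the transpose of $M$, then $P\im M$ if and only if $\T P\im\T M$, a matrix is critical for $\Avm P$ if and only if its transpose is critical for $\Avm{\T P}$, and the rows of $M$ are exactly the columns of $\T M$. Consequently $\cc{\Avm P}=\rc{\Avm{\T P}}$, so $\Avm P$ is column-bounded if and only if $\Avm{\T P}$ is row-bounded.

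Next I would apply Theorem~\ref{thm-main} to both $P$ and $\T P$. The theorem says $\Avm P$ is row-bounded exactly when $P$ contains none of $Q_1,Q_2,Q_3,Q_4$, and $\Avm{\T P}$ is row-bounded exactly when $\T P$ contains none of the $Q_i$, i.e.\ when $P$ contains none of $\T{Q_1},\T{Q_2},\T{Q_3},\T{Q_4}$. So the key step is the combinatorial fact that the set $\{Q_1,Q_2,Q_3,Q_4\}$ is closed under transposition: one checks directly that $\T{Q_1}=Q_1$, $\T{Q_2}=Q_3$ (or whatever the correct pairing is), and similarly for the remaining pattern, so that $\{\T{Q_i}\}=\{Q_i\}$ as sets. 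Indeed, the four $Q_i$ are precisely the rotated copies of the $231$ permutation matrix $Q$, and this family is visibly invariant under all symmetries of the square, in particular under transposition.

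Granting that, the condition ``$P$ contains some $Q_i$'' coincides with the condition ``$\T P$ contains some $Q_i$,'' and therefore $\Avm P$ is row-bounded if and only if $\Avm{\T P}$ is row-bounded, i.e.\ if and only if $\Avm P$ is column-bounded. This gives the equivalence of the first two bullets. The third bullet, that $\Avm P$ is bounded (both row- and column-bounded), is then equivalent to either one of them, since two equivalent statements are each equivalent to their conjunction.

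The only real content beyond Theorem~\ref{thm-main} is the transposition-closure of the forbidden family, which is a finite check, so there is no genuine obstacle here; the corollary is essentially a formal consequence of the main theorem plus symmetry. I would write it up in a few lines, being careful to state the transposition invariance of row-boundedness precisely and to record the identity $\{\T{Q_1},\T{Q_2},\T{Q_3},\T{Q_4}\}=\{Q_1,Q_2,Q_3,Q_4\}$ explicitly.
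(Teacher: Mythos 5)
Your proposal is correct and matches the paper's intended argument: the paper states Corollary~\ref{cor-first} as a direct consequence of Theorem~\ref{thm-main} without spelling out a proof, and the deduction you give---transposition invariance of criticality and row-complexity, combined with the transposition-closure of $\{Q_1,Q_2,Q_3,Q_4\}$---is exactly the right one. (For the record the correct pairing is $\T{Q_1}=Q_1$, $\T{Q_3}=Q_3$, $\T{Q_2}=Q_4$, and $\T{Q_4}=Q_2$, but you already hedged on this, and the set equality $\{\T{Q_i}\}=\{Q_i\}$ that you actually use is what matters.)
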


\begin{cor}\label{cor-second}
Let $\cC=\Avm{P}$ and $\cC'=\Avm{P'}$ be principal classes, and suppose that
$\cC\subseteq\cC'$ (or equivalently, $P\im P'$). If 
$\cC'$ is bounded, then $\cC$ is bounded as well.
\end{cor}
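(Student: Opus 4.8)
The plan is to deduce this corollary directly from Theorem~\ref{thm-main}, from Corollary~\ref{cor-first}, and from the transitivity of the interval-minor relation; there is essentially no new work. First I would use Corollary~\ref{cor-first} to reduce the claim to a statement purely about row-boundedness: since for a principal class being bounded is equivalent to being row-bounded, it suffices to show that if $\cC'=\Avm{P'}$ is row-bounded, then $\cC=\Avm{P}$ is row-bounded.

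Next I would apply the characterisation in Theorem~\ref{thm-main} to $P'$: row-boundedness of $\Avm{P'}$ is equivalent to $P'$ containing none of $Q_1,Q_2,Q_3,Q_4$ as an interval minor. The hypothesis $\cC\subseteq\cC'$ is, as already observed in the statement, equivalent to $P\im P'$; and since $\im$ is transitive (this is exactly why $\Avm{\cF}$ is minor-closed), if some $Q_i$ were an interval minor of $P$, it would also be an interval minor of $P'$, contradicting the previous sentence. Hence $P$ avoids all four $Q_i$ as well, so Theorem~\ref{thm-main} gives that $\Avm{P}$ is row-bounded, and one last appeal to Corollary~\ref{cor-first} upgrades this to boundedness.

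The only things that might merit a sentence of justification are the equivalence between $\cC\subseteq\cC'$ and $P\im P'$ (one direction by testing the inclusion on $M=P'$, the other by transitivity) and the transitivity of $\im$ itself (which follows by composing the two contraction sequences and using that contracting a matrix which dominates some $R$ yields a matrix dominating the corresponding contraction of $R$). Neither is a real obstacle; in fact this corollary has no hard step at all, since all of the substance lies in Theorem~\ref{thm-main}.
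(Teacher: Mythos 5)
Your proposal is correct and is exactly the argument the paper has in mind: the paper states Corollary~\ref{cor-second} as a ``direct consequence'' of Theorem~\ref{thm-main} without spelling out a proof, and the intended reasoning is precisely the combination of Theorem~\ref{thm-main}, Corollary~\ref{cor-first}, the equivalence $\cC\subseteq\cC' \iff P\im P'$, and transitivity of $\im$ that you give.
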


Although each of these two corollaries is stating a seemingly basic property
of the boundedness notion, we are not able to prove either of them without 
first proving Theorem~\ref{thm-main}. We also remark that neither of the two 
corollaries can be generalized to non-principal classes of matrices, as we will 
see in Section~\ref{sec-further}.

Let us say that a pattern $P$ is \emph{row-bounding} if $\Avm{P}$ is 
row-bounded, otherwise $P$ is \emph{non-row-bounding}. Similarly, $P$ is 
\emph{bounding} if $\Avm{P}$ is bounded and \emph{non-bounding} otherwise.

Let $\rbpat$ be the set of patterns $\{Q_1,Q_2,Q_3,Q_4\}$. 
Theorem~\ref{thm-main} states that a pattern $P$ is row-bounding if and only if 
$P$ is in~$\Avm{\rbpat}$. To prove this, we will proceed in several steps. We 
first show, in Subsection~\ref{ssec-nonbound}, that if $P$ contains a pattern 
from $\rbpat$, then $P$ is not row-bounding. This is the easier part of the 
proof, though by no means trivial. Next, in Subsection~\ref{ssec-bound}, we 
show that every pattern in $\Avm{\rbpat}$ is row-bounding. This part is 
more technical, and requires a characterisation the structure of the 
patterns in~$\Avm{\rbpat}$.

\subsection{Non-row-bounding patterns}
\label{ssec-nonbound}
Our goal in this subsection is to show that any pattern $P$ that contains one of 
the matrices from $\rbpat$ is not row-bounding. Let us therefore fix such a 
pattern~$P$. Without loss of generality, we may assume that $Q_1\im P$.

\begin{thm}
\label{thm-unbound}
For every matrix $P$ such that $Q_1\im P$, the class $\Avm{P}$ is 
row-unbounded.
\end{thm}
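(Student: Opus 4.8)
The plan is to reduce the general statement to the single pattern $Q_1$ and then construct, for every $n$, a critical $Q_1$-avoider (or more precisely a critical $P$-avoider) whose first row has $\Omega(n)$ alternating runs. First I would use Observation~\ref{obs-empty} to strip away empty boundary rows and columns of $P$, so I may assume $P$ has no empty line on its boundary; this does not affect row-boundedness, since appending an all-1 line to a critical avoider on the boundary preserves criticality and does not change the complexity of the interior rows. Then I would separate two cases according to whether $P=Q_1$ exactly, or $P$ strictly contains $Q_1$ as interval minor. The heart of the argument is to build a family of matrices $M_n$ that (i) avoid $P$, (ii) are critical, and (iii) have a row of complexity $\Omega(n)$.

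For the construction itself, the natural idea is to exploit the ``$231$-like'' shape of $Q_1$: $Q_1$ has a $1$ in the top-middle, a $1$ in the middle-left (below and to the left of it), and a $1$ in the bottom-right. A matrix avoiding $Q_1$ as an interval minor cannot simultaneously realise these three relative positions. The key observation is that one can place many $1$-entries in the top row, say in columns $c_1<c_2<\dots<c_t$ with $0$-entries between consecutive ones, and then ``guard'' against $Q_1$ by making the rest of the matrix a decreasing (i.e.\ $\ovD_2$-avoiding) staircase that sits entirely to the lower-right, plus possibly an increasing staircase to the lower-left, chosen so that no $1$-entry below can serve as the ``middle-left'' entry of an embedded $Q_1$. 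Concretely, I expect $M_n$ to consist of: a top row with many isolated $1$'s, a decreasing walk from near the top-right down to the bottom-right, an increasing walk from the bottom-left up toward the top, and the region strictly below the top row between the two walls being empty. The isolated $1$'s in the top row, together with the fact that any $Q_1$ embedding would need two entries in rows strictly below the top plus one entry above-left, can be prevented because the lower part is a union of one increasing and one decreasing walk.

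The criticality is the delicate part, and I expect it to be the main obstacle. For each $0$-entry $e$ of $M_n$ I must show that $M_n\Delta e$ contains $P$; when $P$ strictly contains $Q_1$ this is harder than for $P=Q_1$ alone, because I need the rest of $P$ to embed ``for free'' in a part of $M_n$ that I get to design. The standard trick is to make $M_n$ contain, away from the interesting top row, a large generic region --- for instance a big complete block or a long staircase --- that already contains every proper interval minor of $P$ except $Q_1$, indeed contains $P$ minus one $1$-entry with room to spare, so that filling in any single new $1$ anywhere completes a copy of $P$. So the construction should be: take a huge $P$-avoiding critical ``scaffold'' $S$ (obtained abstractly, e.g.\ by starting from any large $P$-avoider and greedily adding $1$'s), and graft onto it the top-row gadget with $\Omega(n)$ runs in such a way that (a) the combined matrix still avoids $P$, using the structural description of $Q_1$-avoiders, and (b) every $0$-entry, both in the gadget and in $S$, is still ``forced.'' Verifying (a) and (b) simultaneously --- in particular checking that the grafted gadget does not accidentally create a copy of $P$ by combining a few gadget entries with many scaffold entries --- is where the real work lies, and will likely require a careful case analysis based on which entries of a hypothetical $P$-embedding land in the gadget versus the scaffold.

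Finally, once $M_n$ is built and shown to be a critical $P$-avoider with a row of complexity $\Omega(n)$, we conclude $\rc{\Avm P}=\infty$, i.e.\ $\Avm P$ is row-unbounded, which is exactly Theorem~\ref{thm-unbound}; the reduction of the other three cases $Q_2,Q_3,Q_4$ to $Q_1$ is immediate by the $90^\circ$-rotation symmetry of the whole setup (rotating a critical avoider gives a critical avoider of the rotated pattern, and rotates rows into columns, but combined with Corollary~\ref{cor-first}'s symmetry this still yields row-unboundedness, or one simply runs the same construction rotated). I expect the symmetry reduction to be a short remark, the construction of the gadget to be a page, and the criticality verification to be the bulk of the proof.
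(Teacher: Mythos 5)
Your overall plan---construct, for each $p$, a $P$-avoider with a row of complexity $\Omega(p)$, then saturate it to a critical matrix---matches the paper's strategy, but several of the specific ideas you propose would not go through, and you are missing the key trick that makes criticality easy rather than the bulk of the proof. First, the reduction to $Q_1$ via Observation~\ref{obs-empty} does not exist: $Q_1\im P$ does not mean $P$ is $Q_1$ with empty boundary lines, so you cannot strip $P$ down to $Q_1$; the construction must depend on the full pattern $P$, not just on $Q_1$. The paper handles this by fixing row indices $r_1<r_2<r_3$ and column indices $c_1<c_2<c_3$ witnessing $Q_1\im P$, and surrounding the interesting part of the construction with a \emph{frame} of all-1 rows and columns (exactly $r_1-1$ on top, $k-r_3$ on the bottom, $c_1-1$ on the left, $\ell-c_3$ on the right) that can absorb the rest of $P$ whenever containment is being tested. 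Your gadget of staircases plus isolated top-row ones has no analogue of this frame, and it is far from clear it avoids an arbitrary $P\supseteq Q_1$.

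Second, the interior of the paper's construction is not a pair of walks but an anti-diagonal of all-1 blocks $B_{i,j}$ (with $i+j=p-1$) of dimensions $(r_3-r_1)\times(c_3-c_1+1)$, calibrated to the gap structure of the $Q_1$-image inside $P$. This sizing is what forces any putative $\phi(e_2),\phi(e_3)$ into a single block and pins $\phi(e_1)$ to a forbidden column in row $r_1$. Third, and most importantly, you flag criticality verification as ``the real work,'' and propose a scaffold-plus-gadget grafting with a case analysis; the paper sidesteps all of this. It saturates $M$ greedily to a critical $M'\in\Avmax{P}$ and then only needs one observation: row $r_1$ of $M'$ equals row $r_1$ of $M$, because filling any 0-entry of row $r_1$ in $M$ already yields the complete $k\times\ell$ all-1 submatrix (the frame supplies the outer rows and columns, the block on that anti-diagonal supplies the inner ones), and in particular yields $P$. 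So the $p$ alternating 0-runs in row $r_1$ survive saturation. You do not need $M$ itself to be critical, and you do not need to control what greedy saturation does anywhere else. Without this observation---and without the frame and the block-calibration that make it true---the proof you sketch would stall exactly where you predict it gets hard.

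Your closing remark on symmetry is fine: the cases $Q_2,Q_3,Q_4$ follow by rotating and reflecting the construction (the paper only proves the $Q_1$ case and invokes symmetry implicitly), and row-unboundedness is what is needed for Theorem~\ref{thm-main}.
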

\begin{proof}
Refer to Figure~\ref{fig-manyints}.
Let $P\in\Pat$ be a pattern containing $Q_1$ as an interval minor. In 
particular, there are row indices $1\le r_1<r_2<r_3\le k$ and column indices 
$1\le c_1<c_2<c_3\le \ell$ such that $P(r_1,c_2)=P(r_2,c_1)=P(r_3,c_3)=1$.

For an arbitrary integer $p$, we will show how to construct a matrix in 
$\Avmax{P}$ of row-complexity at least~$p$. We first describe a matrix 
$M\in\Mat$ with $m=r_1+p(r_3-r_1)+(k-r_3)$ and 
$n=(c_1-1)+p(c_3-c_1+1)+(\ell-c_3)$.

In the matrix $M$, the leftmost $c_1-1$ columns, the rightmost $\ell-c_3$ 
columns, the topmost $r_1-1$ rows and the bottommost $k-r_3$ rows have all 
entries equal to~1. We call these entries the \emph{frame} of~$M$.

In the $r_1$-th row of $M$, there are $p$ 0-entries appearing in columns 
$c_2+i(c_3-c_1+1)$ for $i=0,\dotsc,p-1$, and the remaining entries in row $r_1$ 
are 1-entries.

The remaining entries of $M$, that is, the entries in rows 
$r_1+1,\dotsc,m-(k-r_3)$ and columns $c_1,\dotsc,n-(\ell-c_3)$, form a 
submatrix with $p(r_3-r_1)$ rows and $p(c_3-c_1+1)$ columns. We partition these 
entries into rectangular blocks, each block with $r_3-r_1$ rows and 
$c_3-c_1+1$ columns. For $i,j\in\{0,\dotsc,p-1\}$, let $B_{i,j}$ be such a 
block, with top-left corner in row $r_1+1+i(r_3-r_1)$ and column 
$c_1+j(c_3-c_1+1)$. The entries in $B_{i,j}$ are all equal to 1 if $i+j=p-1$, 
otherwise they are all equal to 0.

We claim that the matrix $M$ avoids~$P$. To see this, assume there is an 
embedding $\phi$ of $P$ into $M$, and consider where $\phi$ maps the three 
1-entries $e_1=(r_1,c_2)$, $e_2=(r_2,c_1)$, and $e_3=(r_3,c_3)$. Note that none 
of these three entries can be mapped into the frame of~$M$, and moreover, 
neither $e_2$ nor $e_3$ can be mapped to the $r_1$-th row of~$M$. In 
particular, $\phi(e_3)$ is inside a block $B_{i,j}$ for some $i+j=p-1$. Since 
$\phi(e_2)$ is to the top-left of $\phi(e_3)$, it must belong to the same block 
$B_{i,j}$. It follows that $\phi(e_2)$ is in the leftmost column of $B_{i,j}$, 
which is the column $c_1+j(c_3-c_1+1)$, and $\phi(e_3)$ in its rightmost 
column, i.e., the column $c_3+j(c_3-c_1+1)$. Therefore, $\phi(e_1)$ is in 
column $c_2+j(c_3-c_1+1)$; however, all the entries in this column where $\phi$ 
could map $e_1$ are 0-entries. Therefore $M$ is in~$\Avm{P}$.

The matrix $M$ is not necessarily a critical $P$-avoider. However, we 
can transform it into a critical $P$-avoider by greedily changing 0-entries to 
1-entries as long as the resulting matrix stays in~$\Avm{P}$. By this process, 
we obtain a matrix $M'\in\Avmax{P}$ that dominates~$M$. We claim that the 
$r_1$-th row of 
$M'$ is the same as the $r_1$-th row of $M$. This is because changing any 
0-entry in the $r_1$-th row of $M$ to a 1-entry produces a matrix containing 
the complete pattern $1^{k\times\ell}$ as a submatrix, and in particular also 
containing~$P$ as a minor.

We conclude that the matrix $M'\in\Avmax{P}$ has row-complexity at least $p$, 
showing that $\Avm{P}$ is indeed row-unbounded.
\end{proof}

\begin{figure}[!ht]
\includegraphics[width=\textwidth]{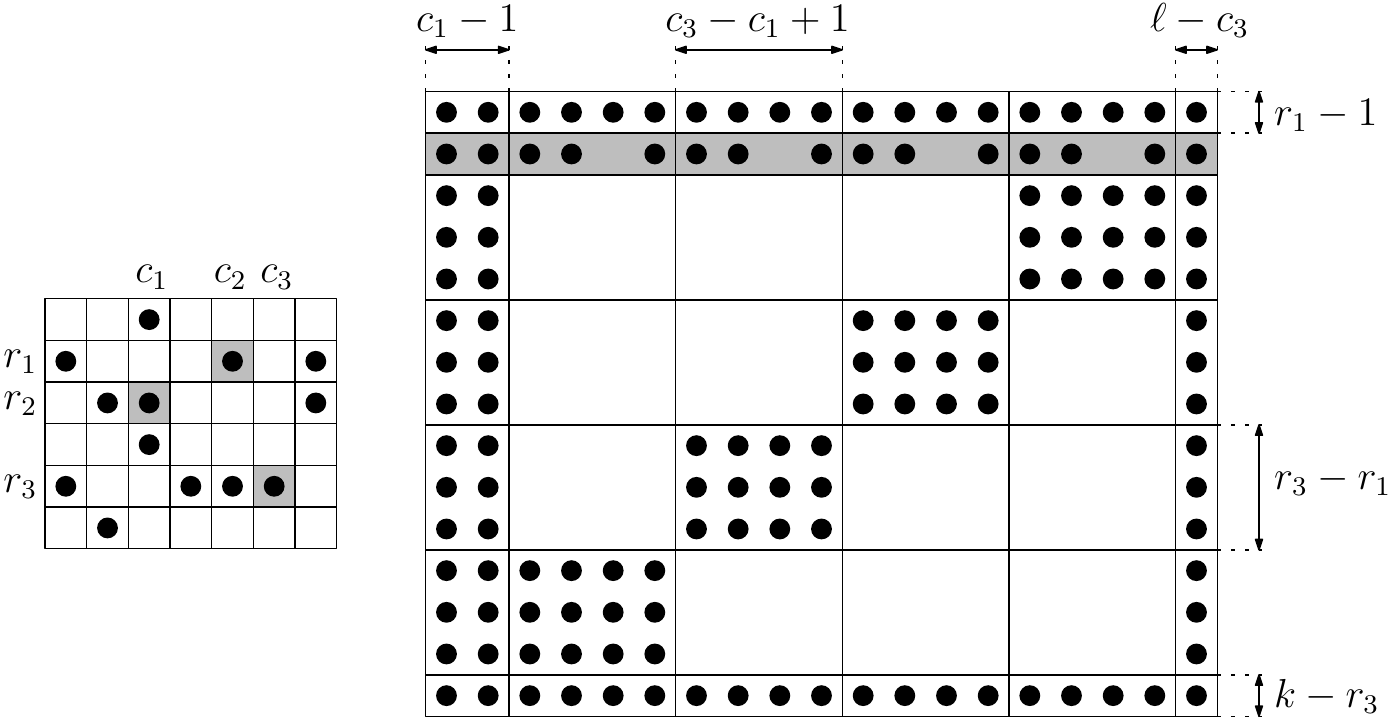}
\caption{Illustration of the proof of Theorem~\ref{thm-unbound}. Left: a 
pattern $P$ with a shaded image of~$Q_1$. Right: a $P$-avoider with a 
shaded row of complexity~$p=4$.}
\label{fig-manyints}
\end{figure}

\subsection{Row-bounding patterns}
\label{ssec-bound}

We now prove the second implication of Theorem~\ref{thm-main}, that is, we 
show that any pattern $P$ avoiding the four patterns in $\rbpat$ is 
row-bounding (and therefore, by symmetry, also column-bounding). We first 
prove a result describing the structure of the patterns $P\in\Avm{\rbpat}$.

We say that a matrix~$M$ can be \emph{covered by $k$ lines} if there is a set 
of lines~$\ell_1,\dots,\ell_k$ such that each 1-entry of $M$ belongs to 
some~$\ell_i$. The following fact is a version of the classical K\H 
onig--Egerváry theorem. We present it here without proof; a proof can be found, 
e.g., in Kung~\cite{kung}.

\begin{fct}[K\H onig--Egerváry theorem]
\label{fac-eger}
A matrix~$M$ cannot be covered by $k$ lines if and only if $M$ contains a set 
of $k+1$ 1-entries, no two of which are in the same row or column.
\end{fct}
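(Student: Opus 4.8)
The plan is to translate the statement into the language of bipartite graphs and then invoke (or reprove) the classical theorem of König. Associate with $M\in\Mat$ the bipartite graph $G_M$ whose two vertex classes are $[m]$ (the rows) and $[n]$ (the columns), with an edge joining $i\in[m]$ to $j\in[n]$ exactly when $(i,j)$ is a 1-entry of $M$. Under this correspondence, a set of 1-entries no two of which share a row or a column is precisely a matching in $G_M$, and a set of lines covering all 1-entries of $M$ is precisely a vertex cover of $G_M$. So the claim is equivalent to the assertion that $G_M$ has no vertex cover of size $k$ if and only if it has a matching of size $k+1$; in other words, that the minimum size of a vertex cover of $G_M$ equals the maximum size of a matching.

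One direction is immediate: if $N$ is a matching with $|N|=k+1$, then any vertex cover must contain at least one endpoint of each edge of $N$, and distinct edges of $N$ contribute distinct such endpoints; hence every vertex cover has size at least $k+1$, so none has size $k$. For the converse, I would use the standard augmenting-path argument. Fix a maximum matching $N$ of $G_M$, let $U\subseteq[m]$ be the set of row-vertices not covered by $N$, and let $Z$ be the set of all vertices reachable from $U$ by a path that alternates between non-matching and matching edges. Maximality of $N$ yields three facts: $U\subseteq Z$; no column-vertex uncovered by $N$ lies in $Z$ (otherwise we would have an $N$-augmenting path); and every edge of $N$ has either both endpoints in $Z$ or both outside $Z$. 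From these, the set $C:=([m]\setminus Z)\cup([n]\cap Z)$ is a vertex cover, and it contains exactly one endpoint of each edge of $N$ and no vertex uncovered by $N$, so $|C|=|N|$. Thus the minimum vertex cover has size equal to the maximum matching, which gives the desired equivalence.

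There is no genuine obstacle here — this is the classical König–Egerváry/König theorem — and the only care needed is in verifying the three properties of the reachable set $Z$, which are routine consequences of the maximality of $N$. Alternatively, the converse could be obtained by induction on the number of 1-entries of $M$ (remove a 1-entry, cover the resulting matrix, and argue that at most one additional line is needed to recover a cover of $M$), or deduced from Hall's theorem or from the integrality of the bipartite matching polytope; any of these would do equally well. Since the statement is used only as a black box in what follows, citing the proof given in Kung~\cite{kung} is also entirely sufficient.
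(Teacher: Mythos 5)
Your proof is correct. The paper presents this as a known fact (the K\H onig--Egerváry theorem) and explicitly cites Kung without giving a proof; your translation to bipartite matching/vertex cover followed by the standard augmenting-path argument is precisely the classical proof found in such references, and, as you observe yourself, simply citing it is equally acceptable given that the paper uses the statement only as a black box.
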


\begin{prop}
\label{pro:boundedints}
If a pattern $P$ belongs to $\Avm{\rbpat}$, then 
\begin{enumerate}
\item $P$ avoids the pattern $D_2=\smm{\bullet&\\&\bullet}$, or
\item $P$ avoids the pattern $\ovD_2=\smm{&\bullet\\ \bullet&}$, or
\item $P$ can be covered by three lines.
\end{enumerate}
\end{prop}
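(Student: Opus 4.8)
The plan is to first recast the hypothesis $P\in\Avm{\rbpat}$ as a statement purely about independent sets of $1$-entries, and then reduce the proposition to an elementary but somewhat lengthy geometric case analysis.

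\emph{Step 1: reformulation.} The matrices $Q_1,\dots,Q_4$ are exactly the four $3\times 3$ permutation matrices that are neither $D_3$ nor $\ovD_3$ (in one-line notation, $213,231,132,312$). Since each $Q_i$ is a permutation matrix, $Q_i\im P$ holds precisely when $P$ has three $1$-entries in pairwise distinct rows and pairwise distinct columns whose induced $3\times 3$ pattern equals $Q_i$. Hence $P\in\Avm{\rbpat}$ if and only if every three pairwise-independent $1$-entries of $P$ induce a monotone pattern (an increasing or a decreasing one). I would then invoke the classical fact that a permutation all of whose length-$3$ sub-patterns are monotone is itself monotone -- an easy induction on the length, examining the positions of the smallest and the largest value -- to conclude: $P\in\Avm{\rbpat}$ if and only if \emph{every} set of $1$-entries of $P$ lying in pairwise distinct rows and columns induces a monotone pattern.

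\emph{Step 2: reduction.} If $P$ avoids $D_2$ we are in case~1, and if $P$ avoids $\ovD_2$ we are in case~2; so assume $P$ contains both. Suppose, towards a contradiction, that case~3 fails. By Fact~\ref{fac-eger}, $P$ then contains four pairwise-independent $1$-entries, which by Step~1 induce a monotone pattern. Turning $P$ upside down if necessary -- an operation that permutes $\{Q_1,\dots,Q_4\}$, hence preserves membership in $\Avm{\rbpat}$, and also preserves ``contains both $D_2$ and $\ovD_2$'' and ``not coverable by three lines'' -- I may assume these four entries form an increasing chain $e_1,\dots,e_4$ (rows and columns both strictly increasing along it). Fixing also two $1$-entries $f,g$ of $P$ with $f$ strictly above and strictly right of $g$ (these exist because $\ovD_2\im P$), it now suffices to exhibit three pairwise-independent $1$-entries with a non-monotone induced pattern, contradicting Step~1.

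\emph{Step 3: the geometric argument.} I would split on whether $f$ and $g$ are independent from the chain $e_1,\dots,e_4$. If both are, then $\{e_1,\dots,e_4,f\}$ and $\{e_1,\dots,e_4,g\}$ are independent sets, hence increasing by Step~1, so each of $f$ and $g$ falls into one of the five ``gaps'' of the increasing staircase carried by the $e_i$ (before $e_1$, between two consecutive $e_i$, or after $e_4$); since $f$ lies above-right of $g$, a short check shows they fall into the \emph{same} gap, and then $f$, $g$ together with a chain entry bordering that gap induce a $132$ or a $213$. If instead $f$ (or, by symmetry, $g$) shares a row or a column with some $e_i$, I would argue directly, using that the chain has four entries: one can always locate a further chain entry $e_k$ lying far enough to the lower right of both $f$ and $g$ (or else fall back on $e_1,e_2$) so that this triple together with $f$ and $g$ has a non-monotone induced pattern. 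Each alternative is dispatched by a one-line comparison of three column indices; the only care needed is to run through the sub-cases -- alignment via a row versus via a column, and the relative order of the aligned index and $k$.

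\emph{Expected main obstacle.} Steps~1--2 and the ``both independent'' branch of Step~3 are clean. The bulk of the work -- and the place where one must be careful -- is the alignment branch of Step~3: one has to verify that the auxiliary chain entry $e_k$ (or the fallback triple built from $e_1$, $e_2$) is always available and always yields a non-monotone pattern, and this is precisely where the hypothesis of \emph{four} pairwise-independent $1$-entries, rather than three, is used.
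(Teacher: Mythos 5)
Your Steps~1 and~2 track the paper's proof closely: both invoke the K\H onig--Egerv\'ary theorem (Fact~\ref{fac-eger}) to extract four pairwise-independent $1$-entries, use $\rbpat$-avoidance to force every three of them into $D_3$ or $\ovD_3$ and hence all four into $D_4$ or $\ovD_4$, and normalize to the increasing case by a vertical flip. (Your ``classical fact'' about permutations with only monotone length-$3$ patterns is an overkill but harmless; the paper only needs the $4$-element instance.) The divergence is in how $\ovD_2$-avoidance is then established. The paper covers $P$ by four regions determined by the chain: the top-left block $P[[r_3]\times[c_3]]$ and the bottom-right block $P[[r_2,k]\times[c_2,\ell]]$ each avoid $\ovD_2$ (an image there together with $e_4$, respectively $e_1$, yields a $Q_i$), the two off-diagonal corners $P[(r_3,k]\times[1,c_2)]$ and $P[[1,r_2)\times(c_3,\ell]]$ are empty (a $1$-entry there plus $e_2,e_3$ yields a $Q_i$), and since these four regions cover the matrix, Proposition~\ref{pro-diag} gives a single decreasing walk. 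You instead posit an $\ovD_2$ witness $(f,g)$ and hunt for a forbidden independent triple. Your ``both independent'' branch is correct and tight. The alignment branch, though, is under-specified: $\{f,g,e_k\}$ may fail to exist (no chain entry to the lower right of both), and the fallback you name ($e_1,e_2$) is not always the right pair --- for instance with $g=(r_g,c_1)$, $r_g>r_4$, and $f=(r_1,c_f)$ with $c_1<c_f<c_2$, the triple $\{g,e_1,e_2\}$ is degenerate and you must reach for $\{g,e_2,e_3\}$; further sub-cases arise when $f$ or $g$ coincides with a chain entry. None of this is a dead end --- a finite case split always surfaces a $Q_i$ --- but it is considerably more bookkeeping than your sketch acknowledges, whereas the paper's region decomposition dispatches all alignment configurations in one stroke. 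I'd suggest reorganizing your Step~3 around that region argument.
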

\begin{proof}
Assume $P$ cannot be covered by three lines. By Fact~\ref{fac-eger}, 
$P$ contains four 1-entries $e_1=(r_1,c_1)$, $e_2=(r_2,c_2)$, $e_3=(r_3,c_3)$ 
and $e_4=(r_4,c_4)$, no two of which are in the same row or column. We 
may assume that $r_1<r_2<r_3<r_4$. Moreover, since $P$ does not contain any 
pattern from $\rbpat$, we see that any three entries among $e_1, e_2, e_3, e_4$
must form an image of $D_3$ or of~$\ovD_3$. Consequently, the four entries 
$e_i$ form an image of $D_4$ or of~$\ovD_4$, i.e., we must have either 
$c_1<c_2<c_3<c_4$ or $c_1>c_2>c_3>c_4$. Suppose that $c_1<c_2<c_3<c_4$ holds, 
the other case being symmetric. 

\begin{figure}
 \centerline{\includegraphics{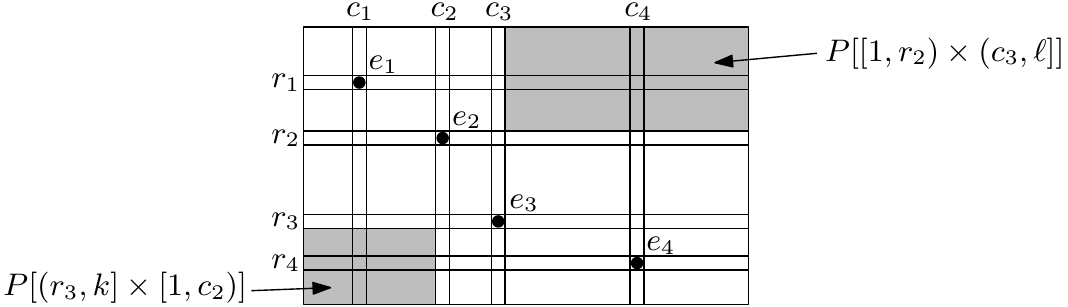}}
\caption{Illustration of the proof of 
Proposition~\ref{pro:boundedints}.}\label{fig-boundedints}
\end{figure}

We will now show that $P$ avoids the pattern $\ovD_2$. 
Note first that the submatrix $P[[r_3]\times[c_3]]$ avoids $\ovD_2$, since an 
image of $\ovD_2$ there would form an image of $Q_1$ with~$e_4$. Therefore, by 
Proposition~\ref{pro-diag}, all the 1-entries in $P[[r_3]\times[c_3]]$ belong to 
a single decreasing walk from $(1,1)$ to~$e_3$. Symmetrically, all 1-entries in 
the submatrix $P[[r_2,k]\times[c_2,\ell]]$ belong to a decreasing walk from 
$e_2$ to $(k,\ell)$. 

Moreover, there can be no 1-entry in $P[(r_3,k]\times [1,c_2)]$ or in 
$P[[1,r_2)\times(c_3,\ell]]$, since such a 1-entry would form a forbidden 
pattern with $e_2$ and~$e_3$. We conclude that all the 1-entries of $P$ belong 
to a single decreasing walk and therefore $P$ avoids~$\ovD_2$.
\end{proof}

We note that Proposition~\ref{pro:boundedints} is not an equivalent 
characterisation of patterns from $\Avm{\rbpat}$, since a matrix covered by 
three lines may contain a pattern from~$\rbpat$. Later, in 
Lemma~\ref{lem-2types}, we will give a more precise description of the avoiders 
of $\rbpat$ that cannot be covered by two lines.

\paragraph{Relative row-boundedness.} Before we prove that each pattern $P$ in 
the set $\Avm{\rbpat}$ is row-bounding, we need some technical preparation. 
First of all, we shall need a more refined notion of row-boundedness, which 
considers individual 1-entries of the pattern $P$ separately.

Let $P$ be a pattern, let $e$ be a 1-entry of $P$, let $M$ be a $P$-avoiding 
matrix, and let $f$ be a 0-entry of~$M$. Recall that $M\Delta f$ is the matrix 
obtained from $M$ by changing the entry $f$ from 0 to 1. We say that the entry 
$f$ of $M$ is \emph{critical for $e$ (with respect to $P$)} if there is an 
embedding of $P$ into $M\Delta f$ that maps $e$ to~$f$. Moreover, if $z$ is a 
0-run in $M$, we say that $z$ is \emph{critical for $e$} if at least one 0-entry 
in $z$ is critical for~$e$.

Note that a $P$-avoiding matrix is critical for $\Avm{P}$ if and only if each 
0-entry of $M$ is critical for at least one 1-entry of~$P$. 

Let $e$ be a 1-entry of a pattern~$P$. Let $M$ be a matrix avoiding $P$. The 
\emph{complexity of a row $r$ of $M$ relative to $e$} is the number of 
0-runs in row $r$ that are critical for~$e$. The \emph{row-complexity of $M$ 
relative to $e$} is the maximum complexity of a row of $M$ relative to 
$e$, and the \emph{row-complexity of $\Avm{P}$ relative to $e$}, denoted 
$\rce{\Avm{P}}{e}$, is the supremum of the row-complexities of the matrices 
in $\Avm{P}$ relative to~$e$. When $\rce{\Avm{P}}{e}$ is finite, we say that 
$\Avm{P}$ is \emph{row-bounded relative to $e$} and \emph{$e$ is row-bounding}, 
otherwise $\Avm{P}$ is \emph{row-unbounded relative to $e$}.

Notice that in the definition of $\rce{\Avm{P}}{e}$, we are taking supremum 
over all the matrices in $\Avm{P}$, not just the critical ones. This makes the 
definition more convenient to work with, but it does not make any substantial
difference. In fact, for a pattern $P$ with a row-bounding 1-entry $e$, the 
row-complexity relative to $e$ in $\Avm{P}$ is maximized by a critical 
$P$-avoider. To see this, suppose that $M$ is a $P$-avoiding matrix, $M^+$ is 
any critical $P$-avoiding matrix that dominates $M$, and $f$ is a 0-entry of 
$M$ that is critical for $e$; then $f$ is necessarily also a 0-entry in~$M^+$, 
and is still critical for~$e$ in~$M^+$. Therefore, the row-complexity of $M^+$ 
relative to $e$ is at least as large as the row-complexity of $M$ relative 
to~$e$. 

Observe that the following inequalities hold for any pattern $P$:
\[
 \max_{e\in\supp(P)} \rce{\Avm{P}}{e} \le \rc{\Avm{P}} \le \sum_{e\in\supp(P)} 
\rce{\Avm{P}}{e}.
\]
In particular, a pattern $P$ is row-bounding if and only if each 1-entry of $P$ 
is row-bounding.

%
%
%

\begin{lemma}\label{lem-columns}
Let $P$ be a pattern, and let $M$ be a $P$-avoiding matrix. Let $z$ be a 
horizontal 0-run of $M$, and let $f\in z$ be a 0-entry in this 0-run. Assume 
that there is an embedding $\phi$ of $P$ into~$M\Delta f$. Then $P$ has a 
1-entry $e$ mapped by $\phi$ to $f$, and moreover, every entry of $P$ in the 
same column as $e$ is mapped by $\phi$ to a column containing an entry from~$z$.
\end{lemma}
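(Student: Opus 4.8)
The plan is to unpack the definition of a partial embedding (via Lemma~\ref{lem-minor}, clauses 3 and 4, we may freely switch between embeddings and partial embeddings) and use the fact that $f$ is the \emph{only} 1-entry of $M\Delta f$ that is not already a 1-entry of $M$. First I would observe that, since $M$ avoids $P$ but $M\Delta f$ contains it, the image of $P$ under $\phi$ must use the entry $f$; otherwise $\phi$ would already be an embedding of $P$ into $M$, contradicting $P\nim M$. So there is a $1$-entry $e=(i,j)$ of $P$ with $\phi(e)=f$. This gives the first assertion; it remains to control where $\phi$ sends the other entries of $P$ lying in column $j$.

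Next I would fix an arbitrary entry $e'=(i',j)$ of $P$ in the same column $j$ as $e$ (the entry $e'$ need not be a $1$-entry of $P$, but since it lies in column $j$ we may still reason about it if we first extend $\phi$ to a partial embedding whose domain $S$ contains $e'$; alternatively, and more cleanly, I would argue only about the $1$-entries of $P$ in column $j$, which suffices for the applications, and note that for $0$-entries one can reduce to a nearby $1$-entry or simply invoke the monotonicity axioms directly). Writing $z=M[\{r\}\times[c_1,c_2]]$ so that $f=(r,c^*)$ with $c_1\le c^*\le c_2$, I want to show $\phi(e')$ lies in some column $c'\in[c_1,c_2]$. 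The key point is the column-monotonicity axiom of an embedding: if $i'<i$ then $\phi(e')$ is weakly above $f$, and if $i'>i$ it is weakly below; in either case $\phi(e')$ is in a column $c'$ with the same relative order to $c^*$ as forced, but we need the stronger conclusion that $c'$ does not escape the interval $[c_1,c_2]$. This is where the structure of the $0$-run $z$ enters.

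The main obstacle — and the heart of the lemma — is ruling out $c'<c_1$ and $c'>c_2$. Suppose $c'>c_2$. I would then argue that we could \emph{shift} the image of $e'$, together with the images of all entries of $P$ weakly below-and-right (or the appropriate quadrant) of $e'$, leftwards — or more simply, replace $\phi(e')$ by the entry $(\rho, c^*)$ for a suitable row $\rho$ — to obtain an embedding of $P$ into $M$ itself, again contradicting $P\nim M$. Concretely: the entry just past the right end of $z$ in row $r$, namely $(r,c_2+1)$, is a $1$-entry of $M$ by maximality of the $0$-run; by a compression/greedy argument one shows that any embedding using a $1$-entry of $M\Delta f$ strictly to the right of column $c_2$ in a row forced to align with $f$ can be modified to use the genuine $1$-entry $(r,c_2+1)$ of $M$ instead, eliminating the dependence on $f$ and yielding an embedding into $M$. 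The symmetric argument handles $c'<c_1$ using the $1$-entry $(r,c_1-1)$ of $M$ (or the left edge of the matrix). I expect the bookkeeping here to be the delicate part: one must verify that after the modification all the monotonicity inequalities of Lemma~\ref{lem-minor}(3) are still satisfied for every pair of entries of $P$, which amounts to checking that the replacement entry sits in the correct row-band and column-band relative to all other images — this is routine but must be done carefully, exploiting that $e'$ and $e$ share column $j$ and that $f$ and the replacement entry share row $r$.
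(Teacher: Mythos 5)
Your overall strategy matches the paper: first note that $\phi$ must map some 1-entry $e$ of $P$ to $f$ (else $\phi$ embeds $P$ into $M$); then, assuming a column-mate $e'$ of $e$ has $\phi(e')$ outside the column range of $z$, modify $\phi$ to produce an embedding of $P$ into $M$, contradiction. The paper's modification is a one-liner: keeping every other image fixed, remap $e$ (the preimage of $f$) to the 1-entry bounding $z$ on the side where $\phi(e')$ lies, i.e.\ $(r,c_1-1)$ when $\phi(e')$ is left of $z$, or $(r,c_2+1)$ when it is to the right; this is a 1-entry of $M$ by maximality of the 0-run, and the embedding axioms are preserved because the new image is in the same row as $f$ and, column-wise, still strictly between the images of the entries of $P$ in columns $<j$ and those in columns $>j$ (the latter using that $\phi(e')$ is already on the far side).

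The flaw in your write-up is that the modification you actually describe targets the wrong entry. Both ``shift the image of $e'$ \dots leftwards'' and ``replace $\phi(e')$ by the entry $(\rho,c^*)$'' leave $\phi(e)=f$ untouched, so $f$ (a 0-entry of $M$) is still in the image of $P$ and you have not produced an embedding of $P$ into $M$ --- no contradiction follows. Moreover $(\rho,c^*)$ lies in a column of $z$ and for $\rho=r$ is itself a 0-entry, so it is not even a legitimate target when $e'$ is a 1-entry of $P$. Your ``concretely'' paragraph circles the correct replacement 1-entry $(r,c_2+1)$, and your closing remark about exploiting that $e,e'$ share a column while $f$ and the replacement share a row shows you have the right picture in mind; but the statement ``can be modified to use $(r,c_2+1)$ instead'' is attached to a 1-entry to the right of $c_2$ (which would be $\phi(e')$), not to $f$. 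To fix the proof, simply say: redefine $\phi(e):=(r,c_2+1)$ (resp.\ $(r,c_1-1)$), leave everything else as is, and check the two monotonicity inequalities --- which is immediate and needs no further ``compression/greedy argument'' or quadrant-shifting.
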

\begin{proof}
Clearly, $\phi$ must map a 1-entry of $P$ to the entry $f$, otherwise $\phi$ 
would also be an embedding of $P$ into $M$ and $M$ would not be $P$-avoiding.

Suppose now that $z=\{r\}\times[c_1,c_2]$ for a row $r$ and columns $c_1\le 
c_2$. Let $e'$ be an entry of $P$ in the same column as $e$. Suppose that $\phi$ 
maps $e'$ to an entry in column $c$, with $c\not\in[c_1,c_2]$. Assume that 
$c<c_1$, the case $c>c_2$ being analogous. Then we may modify $\phi$ to map $e$ 
to the 1-entry $(r,c_1-1)$ instead of $f$, obtaining an embedding of $P$ into 
$M$, which is a contradiction.
\end{proof}

\paragraph{Criteria for relative row-boundedness.}
Let us first point out a trivial but useful fact: if $\overline P\in\Pat$ is 
a pattern obtained from a pattern $P$ by reversing the order of rows (i.e., 
turning $P$ upside down) then a 1-entry $e=(i,j)$ of $P$ is row-bounding if and 
only if the corresponding 1-entry $\overline e=(k-i+1,j)$ of $\overline P$ is 
row-bounding. Analogous properties hold for reversing the order of columns or 
180-degree rotation. Similarly, operations that map rows to columns, 
such as transposition or 90-degree rotation, will map row-bounding 1-entries to 
column-bounding ones and vice versa.

We will now state several general criteria for row-boundedness of 1-entries, 
which we will later use to show that any $\rbpat$-avoiding pattern is 
row-bounding.
\begin{lemma}
 \label{lem-leftmost} If $P\in\Pat$ is a pattern with a row $r\in[k]$ and a 
column $c\in[\ell]$ such that $\supp(P)\subseteq (\{r\}\times[\ell]) \cup ( 
[m]\times[c,\ell])$, then every 1-entry of $P$ in the interval $\{r\}\times[c]$ 
is row-bounding (see Figure~\ref{fig-leftmost}).
\end{lemma}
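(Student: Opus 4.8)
The plan is to fix such a pattern $P$, a 1-entry $e_0=(r,j_0)$ with $j_0\le c$, and show that $\rce{\Avm P}{e_0}$ is bounded by a constant depending only on $P$. Let $M$ be an arbitrary $P$-avoider, fix a row $r^*$ of $M$, and suppose this row contains many 0-runs that are critical for $e_0$; call them $z_1,\dots,z_t$ in left-to-right order, and pick a 0-entry $f_s\in z_s$ critical for $e_0$. For each $s$ there is an embedding $\phi_s$ of $P$ into $M\Delta f_s$ with $\phi_s(e_0)=f_s$; by Lemma~\ref{lem-columns}, every entry of $P$ in column $j_0$ is mapped by $\phi_s$ into a column meeting $z_s$, so in particular all of column $j_0$ of $P$ lands (essentially) in row $r^*$ of $M$. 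The goal is to combine two different such embeddings $\phi_s$ and $\phi_{s'}$, $s<s'$, into a single embedding of $P$ into $M$ itself, yielding a contradiction once $t$ is large enough.

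The key structural input is the hypothesis $\supp(P)\subseteq(\{r\}\times[\ell])\cup([k]\times[c,\ell])$: every 1-entry of $P$ lies either in row $r$ or in the columns $[c,\ell]$. So I would split $\supp(P)$ into the ``row part'' $A=\supp(P)\cap(\{r\}\times[\ell])$ and the ``right part'' $B=\supp(P)\cap([k]\times[c,\ell])$ (these may overlap in $\{r\}\times[c,\ell]$, which is harmless). The plan is to build a hybrid embedding $\psi$ of $P$ into $M$ that uses $\phi_{s'}$ to place the right part $B$ and uses a shifted copy of $\phi_s$ to place the row part $A$. Concretely: since $e_0=(r,j_0)\in A$ with $j_0\le c$, the embedding $\phi_s$ maps all of $A$ — being a single row — into row $r^*$ (or rows of $M$ all ``at the level of'' $z_s$, after the contraction/partial-embedding normalisation from Lemma~\ref{lem-minor}), and crucially it maps the rightmost entries of $A$ to columns that are still to the \emph{left} of where $\phi_{s'}$ starts placing $B$, because $z_s$ lies entirely to the left of $z_{s'}$ while $B$ sits in the columns $[c,\ell]\ni j_0$ and $\phi_{s'}$ is forced by Lemma~\ref{lem-columns} to put column $j_0$ near $z_{s'}$. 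I would make this precise using the partial-embedding formulation (item~4 of Lemma~\ref{lem-minor}): define $\psi$ on $A$ via $\phi_s$ and on $B\setminus A$ via $\phi_{s'}$, check the monotonicity and ``distance from the boundary'' inequalities, and verify that all 1-entries still map to 1-entries of $M$ (this is where using $\phi_s$ on $A$ — not $\phi_{s'}$ — matters, since $f_{s'}$ is the only entry that need not be a genuine 1-entry of $M$, and $f_{s'}\notin A$'s image under $\psi$). If $t\ge 2$ this already produces an embedding of $P$ into $M$, contradicting that $M$ is a $P$-avoider; so in fact $\rce{\Avm P}{e_0}\le 1$.

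The main obstacle is the bookkeeping in the gluing step: one has to make sure that the horizontal positions assigned to $A$ by $\phi_s$ genuinely precede those assigned to $B$ by $\phi_{s'}$, and that the vertical positions are consistent — i.e. that every entry of $A$ lands in row $r^*$ of $M$ under $\phi_s$ and that $B$'s rows under $\phi_{s'}$ respect the ordering relative to row $r^*$. The hypothesis $\supp(P)\subseteq(\{r\}\times[\ell])\cup([k]\times[c,\ell])$ is exactly what guarantees there is no 1-entry of $P$ simultaneously far to the left \emph{and} off row $r$, which would otherwise break the separation; Lemma~\ref{lem-columns} supplies the needed horizontal control, and the partial-embedding slack of Lemma~\ref{lem-minor} absorbs the rest. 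Once these checks go through, the contradiction is immediate and the lemma follows.
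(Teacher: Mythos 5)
Your proposal takes a genuinely different route from the paper's, but it has two gaps that both break the gluing step, so as written the argument does not go through.

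First, a bookkeeping error that is fatal on its own: you define $\psi$ on $A=\supp(P)\cap(\{r\}\times[\ell])$ by $\psi|_A=\phi_s|_A$, and then argue that all $1$-entries map to $1$-entries of $M$ because ``$f_{s'}\notin A$'s image under $\psi$.'' But $e_0\in A$ and $\psi(e_0)=\phi_s(e_0)=f_s$, which is a $0$-entry of $M$. Using $\phi_s$ rather than $\phi_{s'}$ on $A$ simply trades one bad entry ($f_{s'}$) for another ($f_s$); the problem does not disappear. Any correct argument must re-map $e_0$ (and possibly nearby entries of row $r$) to genuine $1$-entries of $M$ lying to the left of the critical $0$-run — your plan never does that.

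Second, and more structurally, the horizontal compatibility you invoke is false. You assert that $\phi_s$ ``maps the rightmost entries of $A$ to columns that are still to the left of where $\phi_{s'}$ starts placing $B$,'' justified by the inclusion $j_0\in[c,\ell]$. But the hypothesis is $j_0\le c$, so $j_0\in[c,\ell]$ forces $j_0=c$; in general this is wrong. More importantly, $A$ extends across all columns $[1,\ell]$ of $P$, so $A\cap B=\supp(P)\cap(\{r\}\times[c,\ell])$ is typically nonempty. Take $e_1=(r,\ell)\in A\cap B$ (mapped by $\phi_s$) and $e_2=(r',c)\in B\setminus A$ (mapped by $\phi_{s'}$). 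Since $\ell>c$, a partial embedding needs $\psi(e_1)$ strictly to the right of $\psi(e_2)$, by at least $\ell-c$. But Lemma~\ref{lem-columns} only pins column $j_0$ of $P$ to columns intersecting $z_s$ under $\phi_s$; it gives no upper bound on where $\phi_s$ places column $\ell$. If the gap between $z_s$ and $z_{s'}$ is large, $\phi_s(e_1)$ can easily land far to the left of $\phi_{s'}(e_2)$, so $\psi$ is not order-preserving and is not a partial embedding. (There is also no reason $\phi_s$ maps all of $A$ into row $r^*$: the embedding definition allows entries in the same row of $P$ to land in different rows of $M$, which you note parenthetically but then rely on the stronger version.) A corollary of these issues is that your claimed bound $\rce{\Avm P}{e_0}\le1$ is not established; the paper only proves the weaker bound $\rce{\Avm P}{e}\le j$.

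For comparison, the paper's proof avoids gluing two embeddings altogether. It takes a \emph{single} embedding $\phi$ of $P$ into $M\Delta f$ with $\phi(e)=f$, and constructs $\psi$ by re-mapping only the prefix $(r,1),\dotsc,(r,j)$ of row $r$ to $j$ genuine $1$-entries of $M$ lying in the same row as $f$ and strictly to the left of the critical $0$-run $z$, while keeping $\phi$'s images for all other $1$-entries of $P$. The hypothesis $\supp(P)\subseteq(\{r\}\times[\ell])\cup([k]\times[c,\ell])$ guarantees that the re-mapped entries $(r,1),\dotsc,(r,j)$ are essentially the only $1$-entries in columns $[1,j]$, and Lemma~\ref{lem-columns} guarantees that $\phi$ places everything in columns $\ge j$ at or to the right of $z$, so the re-mapped prefix sits safely to the left of the rest. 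This sidesteps both of the issues above: the $0$-entry $f$ is explicitly re-mapped away, and there is no second embedding to create an order conflict.
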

\begin{figure}
\centerline{\includegraphics{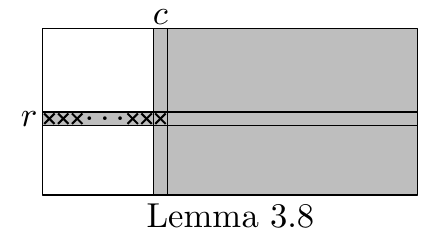}\hskip 10pt\includegraphics{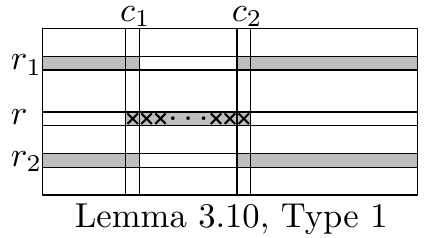}}
\centerline{\includegraphics{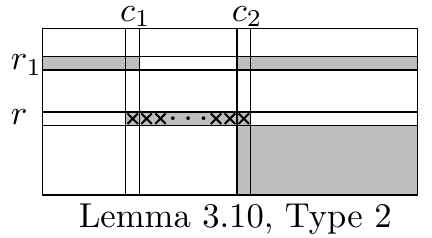}\hskip 10pt\includegraphics{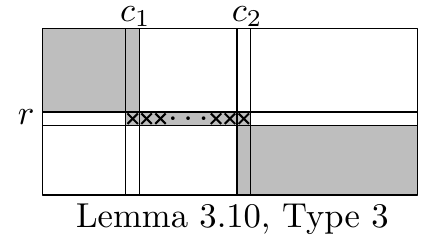}}
 \caption{Illustration of Lemma~\ref{lem-leftmost} and Lemma~\ref{lem-H}. The 
shaded areas are the possible locations of 1-entries. The 1-entries in the
cells marked by crosses are row-bounding.}\label{fig-leftmost}
\end{figure}
\begin{proof}
Let $e=(r,j)$ be a 1-entry of $P$ with $j\le c$. Let $M\in\Mat$ be a  
$P$-avoider, let $f=(r',c')$ be a 0-entry of $M$ critical for~$e$, and let 
$z$ be the horizontal 0-run containing~$f$. 

We claim that in the row $r'$ of $M$, there are fewer than $j$ 1-entries to the 
left of~$f$. Suppose this is not the case, i.e., row $r'$ contains $j$ 
distinct 1-entries $f'_1, f'_2,\dotsc,f'_j$, numbered left to right, all of 
them to the left of~$f$. 

Let $\phi$ be an embedding of $P$ into $M\Delta f$ which maps $e$ to~$f$. 
Recall from Lemma~\ref{lem-columns} that all the entries in column $j$ of $P$ 
are mapped to columns intersecting~$z$. In particular, all the entries from 
column $j$ are mapped to the right of~$f'_j$.

We define a partial embedding $\psi$ of $P$ into $M$, as follows.
Firstly, $\psi$ maps the entries $(r,1), (r,2),\dotsc,(r,j)$ of $P$ to the 
1-entries $f'_1, f'_2,\dotsc,f'_j$ of~$M$. Next, $\psi$ maps each 1-entry
of $P$ that is not among $(r,1), (r,2),\dotsc,(r,j)$ to the same entry 
as~$\phi$. We easily see that $\psi$ is a partial embedding of $P$ into $M$, a 
contradiction.

Therefore, there are fewer than $j$ 1-entries in row $r'$ to the right of $f$,
and hence row $r$ has at most $j$ 0-runs critical for~$e$. Consequently, 
$\rce{\Avm{P}}{e}\le j$ and $e$ is row-bounding.
\end{proof}
The assumptions of Lemma~\ref{lem-leftmost} are satisfied when $c$ is the 
leftmost nonempty column of a pattern $P$ and $r$ is an arbitrary row. We state 
this important special case as a separate corollary.
\begin{cor}\label{cor-leftmost}
Any 1-entry in the leftmost nonempty column of a pattern $P$ is row-bounding.
\end{cor}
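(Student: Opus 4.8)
The plan is to deduce the corollary directly from Lemma~\ref{lem-leftmost}. Let $c\in[\ell]$ be the leftmost nonempty column of $P$ (if $P$ has no 1-entry, the statement is vacuous), and let $e=(r,c)$ be an arbitrary 1-entry of $P$ in column~$c$. The key step is to check that the hypothesis of Lemma~\ref{lem-leftmost} is satisfied for this particular row~$r$ and column~$c$: since $c$ is the leftmost nonempty column, every 1-entry of $P$ lies in a column of index at least~$c$, so $\supp(P)\subseteq[k]\times[c,\ell]\subseteq(\{r\}\times[\ell])\cup([k]\times[c,\ell])$, which is exactly the containment required by the lemma.

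Next I would observe that the entry $e=(r,c)$ lies in the interval $\{r\}\times[c]=\{r\}\times[1,c]$, since its column index~$c$ satisfies $c\le c$. Lemma~\ref{lem-leftmost} therefore applies and yields that $e$ is row-bounding. As $e$ was an arbitrary 1-entry of the leftmost nonempty column of~$P$, we conclude that every such 1-entry is row-bounding, which is precisely the assertion of the corollary.

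There is no real obstacle here: the corollary is an immediate specialisation of the lemma, and the only things to verify are the trivial set inclusion displayed above and the equally trivial fact that a column-$c$ entry of row~$r$ belongs to $\{r\}\times[1,c]$. The one point that deserves a moment's attention is that the row~$r$ appearing in the hypothesis of Lemma~\ref{lem-leftmost} is taken to depend on the 1-entry under consideration --- namely, it is the row occupied by that entry --- so that a single invocation of the lemma, applied once for each 1-entry of column~$c$, covers all of them.
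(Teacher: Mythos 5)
Your proof is correct and matches the paper's intent: the paper introduces Corollary~\ref{cor-leftmost} explicitly as the special case of Lemma~\ref{lem-leftmost} where $c$ is the leftmost nonempty column and $r$ is an arbitrary row, which is precisely what you verify (with the minor care of choosing $r$ to be the row of the 1-entry under consideration). Nothing more to add.
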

\begin{lemma}
\label{lem-H}
Let $P\in\Pat$ be a pattern with a row $r$, and two 
distinct columns $c_1<c_2$, such that all the 1-entries of $P$ in row $r$ 
belong to the interval $\{r\}\times[c_1,c_2]$. Moreover, if $c$ is a column
index with $c_1<c<c_2$, then $P$ has no 1-entry in column $c$ except possibly 
for the entry $(r,c)$. Suppose furthermore that $P$ satisfies one of the 
following three conditions (see Figure~\ref{fig-leftmost}):
\begin{itemize}
 \item[Type 1:] All the 1-entries of $P$ above row $r$ are in a single row 
$r_1<r$, and all the 1-entries below row $r$ are in a single row~$r_2>r$.

\item[Type 2:] All the 1-entries of $P$ above row $r$ are in a single row 
$r_1<r$, and all the 1-entries below row $r$ are in the submatrix 
$P[(r,k]\times[c_2,\ell]]$.

\item[Type 3:] All the 1-entries of $P$ above row $r$ are in the submatrix
$P[[1,r)\times[c_1]]$, and all the 1-entries below row $r$ are in the 
submatrix $P[(r,k]\times[c_2,\ell]]$.
\end{itemize}
Then every 1-entry in the interval $\{r\}\times[c_1,c_2]$ is row-bounding.
\end{lemma}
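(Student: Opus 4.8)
The plan is to handle the three types more or less uniformly, by mimicking the argument of Lemma~\ref{lem-leftmost} but exploiting the special shape of $P$ to bound the number of critical $0$-runs in a fixed row of an arbitrary $P$-avoider $M$. Fix a $1$-entry $e=(r,j)$ with $c_1\le j\le c_2$, fix a $P$-avoider $M$, fix a $0$-entry $f=(r',c')$ of $M$ that is critical for $e$, and let $z=\{r'\}\times[a,b]$ be the horizontal $0$-run containing $f$. Let $\phi$ be an embedding of $P$ into $M\Delta f$ mapping $e$ to $f$. By Lemma~\ref{lem-columns}, every $1$-entry of $P$ in column $j$ is mapped by $\phi$ into a column meeting $z$, i.e.\ into the column band $[a,b]$. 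In particular, since the hypothesis says that (apart from the single entry $(r,c)$ for $c_1<c<c_2$, which is in row $r$) the only columns containing $1$-entries of $P$ above or below row $r$ are $c_1$ and $c_2$, the images of the $1$-entries of $P$ lying strictly above row $r$ occupy only a bounded number of rows of $M$ (one or $r_1$'s worth), and likewise below. The idea is: the portion of row $r'$ of $M$ strictly to the left of $a$, together with the portion strictly to the right of $b$, cannot contain "too rich" a configuration of $1$-entries, otherwise we could reroute $\phi$ to an embedding of $P$ into $M$ itself.

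Concretely, I would argue a contradiction of the same flavour as in Lemma~\ref{lem-leftmost}. Suppose row $r'$ of $M$ has at least $N$ (for a suitable constant $N=N(P)$) $1$-entries strictly to the left of $f$. The $1$-entries of $P$ in row $r$ to the left of column $j$ number at most $j-c_1<\ell$, so as in Lemma~\ref{lem-leftmost} we can re-embed the row-$r$ entries of $P$ that lie weakly left of $e$ onto distinct $1$-entries of row $r'$ of $M$ lying weakly left of $f$, instead of onto their $\phi$-images. The only thing that can go wrong is an interaction with the images of entries of $P$ that do \emph{not} lie in row $r$ and do \emph{not} lie in column $j$: those are exactly the entries in columns $c_1$ and $c_2$ off row $r$ (Types 1--3 each constrain these to one or two rows of $P$). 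In Type~1 all such entries lie in the two rows $r_1$ and $r_2$; in Type~2, the ones above lie in row $r_1$ and the ones below lie in the lower-right block $P[(r,k]\times[c_2,\ell]]$; in Type~3, the ones above lie in the lower-left-of-that-row block $P[[1,r)\times[c_1]]$ and the ones below in $P[(r,k]\times[c_2,\ell]]$. Because their columns in $P$ are only $c_1$ and $c_2$ (or, in the block cases, $\ge c_2$), their $\phi$-images in $M$ are far enough from $z$ horizontally that rerouting the row-$r$ entries into new columns near the left end of $z$ does not violate the monotonicity requirements of a (partial) embedding; one checks the four inequalities of a partial embedding directly. The upshot is that having $\ge j-c_1+1$ $1$-entries of row $r'$ to the left of $f$ yields a partial embedding of $P$ into $M$, contradicting $M\in\Avm{P}$; hence there are fewer than $j-c_1+1\le\ell$ such $1$-entries, and symmetrically fewer than $\ell$ to the right of $f$. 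Therefore row $r$ of $M$ contains at most $2\ell$ (say) $0$-runs critical for $e$, so $\rce{\Avm{P}}{e}\le 2\ell$ and $e$ is row-bounding. Running this over all $j\in[c_1,c_2]$ proves the lemma.

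The step I expect to be the genuine obstacle is making the rerouting argument precise in Types 2 and 3, where the $1$-entries of $P$ off row $r$ are not confined to a single row but to a \emph{block} extending to the right (and correspondingly down or up). There one must be careful that the $\phi$-images of those block entries sit entirely to the right of the column band $[a,b]$ of $z$ — this is where the hypothesis "no $1$-entry in column $c$ for $c_1<c<c_2$ except $(r,c)$" combined with Lemma~\ref{lem-columns} is used, forcing the images of the column-$j$ entries to straddle $z$ while the images of the columns $\ge c_2$ entries lie to their right — and that the reindexing of the row-$r$ entries of $P$ onto the $1$-entries of row $r'$ of $M$ respects the gap-monotonicity condition $i_2-i_1\le i_2^*-i_1^*$ / $j_2-j_1\le j_2^*-j_1^*$ of a partial embedding. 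I would isolate this as a short sublemma: given the shape constraints, any embedding $\phi$ of $P$ into $M\Delta f$ mapping $e$ into $z$ has all images of columns-$\ge c_2$ entries to the right of $b$ and all images of columns-$\le c_1$ entries (off row $r$) suitably placed, after which the rerouting is routine. Once that sublemma is in hand, the three types are handled by the same bookkeeping and the proof is essentially a refinement of Lemma~\ref{lem-leftmost}.
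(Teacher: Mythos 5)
Your plan is to reuse the one-sided rerouting of Lemma~\ref{lem-leftmost}: fix one critical $0$-entry $f$ with embedding $\phi$ of $P$ into $M\Delta f$, then re-embed the row-$r$ entries of $P$ lying weakly left of $e$ onto $1$-entries of row $r'$ lying to the left of $f$, keeping the rest of $\phi$. This breaks for a reason you partly sense but locate in the wrong place. You write that the off-row-$r$ $1$-entries ``are exactly the entries in columns $c_1$ and $c_2$ off row~$r$''; that is a misreading of the hypothesis. The constraint ``no $1$-entry in column $c$ for $c_1<c<c_2$ except possibly $(r,c)$'' says nothing about columns $<c_1$ or $>c_2$, and in all three Types $P$ may have $1$-entries off row $r$ in arbitrary columns $\le c_1$ (Type~1: row $r_1$ and row $r_2$; Type~2: row $r_1$; Type~3: the whole block $[1,r)\times[c_1]$). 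Lemma~\ref{lem-columns} only pins down the $\phi$-images of entries in the column $j$ of $e=(r,j)$; entries of $P$ in a column $c_0<j$ off row $r$ are constrained only by $\phi$ being an embedding, so $\phi$ may place them in any column strictly to the left of $c(f)$ — including just one column to the left of $f$. When you now push $(r,c_1),\dots,(r,j)$ onto $1$-entries $g_1,\dots,g_{j-c_1+1}$ that may sit far to the left of $f$, the partial-embedding gap inequality $j_2-j_1\le j_2^*-j_1^*$ between such an entry $(i',c_0)$ (mapped by $\phi$ near $f$) and the rerouted $(r,c_1)\mapsto g_1$ (mapped far to the left of $f$) can fail badly, and there is nothing in the hypothesis to prevent this. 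Your proposed ``short sublemma'' asserting that the $\phi$-images of columns-$\le c_1$ entries are ``suitably placed'' is precisely the false step; it is not a gap one can fill by bookkeeping.

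The paper's proof avoids this by a genuinely two-sided construction. It takes the \emph{leftmost} and \emph{rightmost} $0$-entries $f$ and $f'$ critical for $e$ in row $r'$, with embeddings $\phi$ of $P$ into $M\Delta f$ and $\phi'$ of $P$ into $M\Delta f'$, and $d=c_2-c_1+1$ distinct $1$-entries $f_1,\dots,f_d$ strictly between $f$ and $f'$. It then builds a partial embedding $\psi$ by sending $(r,c_1),\dots,(r,c_2)$ to $f_1,\dots,f_d$, sending the $P$-entries in columns $\le c_1$ (other than $(r,c_1)$) to their $\phi$-images, and sending the $P$-entries in columns $\ge c_2$ (other than $(r,c_2)$) to their $\phi'$-images. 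The key point — which is exactly what the one-sided version lacks — is that Lemma~\ref{lem-columns} applied to $\phi$ shows that the $\phi$-images of the column-$c_1$ entries lie strictly to the left of $f_1$, and applied to $\phi'$ that the $\phi'$-images of the column-$c_2$ entries lie strictly to the right of $f_d$; the columns-$<c_1$ entries then sit further left and the columns-$>c_2$ entries further right by the gap property of embeddings, so all column inequalities for a partial embedding hold. The row inequalities are where the three Type hypotheses are actually used (to ensure, e.g., that the $\phi$-mapped and $\phi'$-mapped entries above row $r$ live in a single row of $P$, or else all lie on the same side and use the same embedding), which your sketch does not address. So the architecture you need is: stitch $\phi$ on the left of the rerouted segment, $\phi'$ on the right, and place the rerouted segment strictly between $f$ and $f'$ — not reroute leftwards from a single $f$.
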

\begin{proof}
Let~$P\in\Pat$ be a pattern satisfying the assumptions, and let $d=c_2-c_1+1$. 
We will show that for each 1-entry~$e\in \{r_2\}\times[c_1,c_2]$ of $P$ and 
every $P$-avoiding matrix $M\in\Mat$, there are at most $d$ 0-runs critical for 
$e$ in each row of~$M$. 

For contradiction, assume that $M$ has a row~$r'$ with at least $d+1$ 0-runs 
critical for~$e$. Let $f$ and $f'$ be the leftmost and the rightmost 0-entries 
critical for $e$ in row~$r'$. By assumption, $M$ has at least $d$ 
1-entries in row $r'$ between $f$ and~$f'$. Let $f_1,f_2,\dotsc,f_d$ be $d$ 
such 1-entries, numbered left to right.

Let $\phi$ be an embedding of $P$ into $M\Delta f$ which maps $e$ to $f$, and 
let $\phi'$ be an embedding of $P$ into $M\Delta f'$ which maps $e$ to~$f'$.
Let us describe a partial embedding $\psi$ of $P$ into~$M$. Firstly, $\psi$ 
maps the entries $(r,c_1),\allowbreak(r,c_1+1),\allowbreak\dotsc,(r,c_2)$ to 
the entries $f_1,f_2,\dotsc,f_d$ in row~$r'$ of $M$. Next, $\psi$ maps each 
1-entry in $M[[m]\times[c_1]]$ except $(r,c_1)$ to the same entry as $\phi$, and 
$\psi$ maps the 1-entries in $M[[m]\times[c_2,n]]$ except $(r,c_2)$ to the same 
entry as~$\phi'$. We easily check that this makes $\psi$ a partial embedding of 
$P$ into $M$: note that from Lemma~\ref{lem-columns}, it follows that $\phi$ 
maps all the entries in column $c_1$ of $P$ to entries strictly to the left of 
$f_1$, and $\phi'$ maps entries in column $c_2$ to entries strictly to the right 
of~$f_d$. 

This is impossible, since $M$ is $P$-avoiding. Therefore, every row of a 
$P$-avoiding matrix has at most $d$ 0-runs critical for $e$, and $e$ is 
row-bounding.
\end{proof}

\begin{lemma}
\label{lem-I}
Let $P\in\Pat$ be a pattern with two rows $r_1\le r_2$ and a column $c$, 
such that for every $r\in[r_1,r_2]$, $P$ has no 1-entry in row $r$ except 
possibly for the entry $(r,c)$. Suppose moreover, that $P$ satisfies one of the 
following conditions (see Figure~\ref{fig-I}):
\begin{itemize}
\item[Type 1:] All the 1-entries of $P$ above row $r_1$ are in column $c$ 
or in the row $r_1-1$, and all the 1-entries below row $r_2$ are in column $c$ 
or in the row $r_2+1$.
\item[Type 2:] All the 1-entries of $P$ above row $r_1$ are in column $c$ 
or in the row $r_1-1$, and all the 1-entries below row $r_2$ are in the 
submatrix $P[(r_2,k]\times [c,\ell]]$.
\item[Type 3:] All the 1-entries of $P$ above row $r_1$ are in the 
submatrix $P[[1,r_1)\times[c]]$, and all the 1-entries below row $r_2$ are in 
the submatrix $P[(r_2,k]\times [c,\ell]]$.
\end{itemize}
Then every 1-entry in the interval  $[r_1,r_2]\times\{c\}$ is row-bounding.
\end{lemma}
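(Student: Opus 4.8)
The plan is to mimic the strategy of Lemma~\ref{lem-leftmost} and Lemma~\ref{lem-H}: fix a $1$-entry $e$ in the vertical interval $[r_1,r_2]\times\{c\}$, fix a $P$-avoiding matrix $M$, and suppose for contradiction that some row $r'$ of $M$ contains many $0$-runs critical for $e$. From two such $0$-runs (the leftmost and the rightmost critical $0$-entry in row $r'$), together with the embeddings witnessing their criticality, I want to splice together a partial embedding of $P$ into $M$ itself, contradicting $P$-avoidance. The key structural feature being exploited is that the ``middle band'' $[r_1,r_2]$ of $P$ is a single column $c$, so all the $1$-entries of this band in $P$ sit in one column, and when we move from $M\Delta f$ to $M$ we only need to ``shift'' the image of that one column by one position.

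Concretely, first I would set $d$ to be a constant depending only on $P$ (here $d=1$ should essentially suffice, since the middle band is a single column; more precisely the bound will be controlled by $\ell$ or by the number of $1$-entries of $P$). Assuming row $r'$ has at least two $0$-runs critical for $e$, let $f$ be the leftmost such critical $0$-entry and $f'$ the rightmost; since they lie in distinct $0$-runs there is a $1$-entry $f_0$ of $M$ in row $r'$ strictly between them. Let $\phi$ be an embedding of $P$ into $M\Delta f$ with $\phi(e)=f$, and $\phi'$ an embedding of $P$ into $M\Delta f'$ with $\phi'(e)=f'$. By Lemma~\ref{lem-columns}, $\phi$ maps every $1$-entry of $P$ in column $c$ into a column intersecting the $0$-run of $f$, hence strictly to the left of $f_0$; symmetrically $\phi'$ maps every $1$-entry of $P$ in column $c$ strictly to the right of $f_0$. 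Now define $\psi$ on $\supp(P)$: send every $1$-entry of $P$ in column $c$ to $f_0$ --- wait, that collapses a column to a point, so instead I send the unique $1$-entry of $P$ in row $r'$'s ``band position'' appropriately; the cleaner move is to map all $1$-entries of $P$ lying in column $c$ and in rows $\le r_1$ (resp.\ $\ge r_2$) by $\phi$ (resp.\ $\phi'$), and map the band entries $[r_1,r_2]\times\{c\}$ of $P$ to $f_0$ if the band has a single $1$-entry, or otherwise spread them among $1$-entries of $M$ in row $r'$ between $f$ and $f'$. Entries of $P$ strictly to the left of column $c$ are sent using $\phi$, entries strictly to the right using $\phi'$; the three Type~1/2/3 hypotheses are exactly what guarantee that ``above row $r_1$'' and ``below row $r_2$'' the $1$-entries of $P$ are confined either to column $c$, or to the adjacent row, or to a corner submatrix, so that the $\phi$-part and the $\phi'$-part do not interfere with the band and with each other. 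Checking that $\psi$ satisfies the monotonicity-and-margin inequalities of a partial embedding is the routine verification, using that $\phi$'s images sit left of $f_0$ and $\phi'$'s sit right of $f_0$, and that the three types each rule out $1$-entries in the ``wrong'' quadrant.

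The main obstacle is the case analysis over the three types and, within each, verifying that the spliced map $\psi$ is genuinely a partial embedding --- in particular that the entries inherited from $\phi$ on the left and from $\phi'$ on the right, together with the band entries placed in row $r'$, all respect the gap inequalities $i_2-i_1\le i_2^*-i_1^*$ and $j_2-j_1\le j_2^*-j_1^*$. For Type~1, where the flanking $1$-entries of $P$ are in rows $r_1-1$ and $r_2+1$, the vertical bookkeeping is delicate because $\phi$ and $\phi'$ may place those rows at different heights in $M$; here I would argue that the band in $M$ can be ``stretched'' vertically since $P$ has no $1$-entries strictly inside $(r_1,r_2)$ except in column $c$, so there is enough vertical room. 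For Types~2 and~3, the corner-submatrix hypotheses make one side (or both) easy because those entries are already to the right of / below everything relevant, so they can be taken verbatim from $\phi'$. Once all three types are dispatched, we conclude that every row of every $P$-avoiding matrix has a bounded number of $0$-runs critical for $e$, hence $\rce{\Avm{P}}{e}$ is finite and $e$ is row-bounding, as claimed.
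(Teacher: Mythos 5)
Your high-level strategy is the same as the paper's: take the leftmost and rightmost critical $0$-entries $f_L, f_R$ in a row, the witnessing embeddings $\phi_L,\phi_R$, and a $1$-entry $f_0$ between them, then splice the two embeddings into a partial embedding $\psi$ of $P$ into $M$. Mapping entries strictly left of column $c$ via $\phi_L$ and entries strictly right via $\phi_R$ is also exactly right, and Lemma~\ref{lem-columns} is the correct tool for the horizontal separation. The gap is in the step you yourself flag as ``delicate'': how to define $\psi$ on the $1$-entries of $P$ in column~$c$ other than~$e$.

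Both of the concrete options you sketch for that step fail. Spreading the band entries $[r_1,r_2]\times\{c\}$ among $1$-entries of $M$ in the single row $r'$ cannot work: those entries of $P$ lie in distinct rows, so mapping two of them into one row of $M$ violates the row-gap inequality $i_2-i_1\le i_2^*-i_1^*$ outright. And mapping the column-$c$ entries above $r_1$ via $\phi_L$ and those below $r_2$ via $\phi_R$ fails in Types~1 and~2. For instance in Type~1, take $e_1=(r_1-2,c)$ (a column-$c$ entry above $r_1$) and $e_2=(r_1-1,c'')$ with $c''>c$ (a row-$(r_1-1)$ entry to the right of $c$). Your rule sends $e_1$ by $\phi_L$ and $e_2$ by $\phi_R$, and there is no reason why $\phi_R(e_2)$ should lie below $\phi_L(e_1)$ --- the two embeddings are independent and may place the top of $P$ at completely different heights. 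Your proposed remedy, ``stretching'' the band vertically, is not a construction; the inequality has to be certified pointwise.

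What the paper does, and what is missing from your proposal, is to choose, separately for each column-$c$ entry $e'\ne e$, \emph{which} of $\phi_L(e')$ or $\phi_R(e')$ to use, based on $e'$'s position relative to $e$ and on the Type. Concretely: for $e'\in[r_1,r)\times\{c\}$ take the lower of the two images (larger row index), for $e'\in(r,r_2]\times\{c\}$ take the higher; for $e'\in[1,r_1)\times\{c\}$ take the higher in Types~1 and~2 but $\phi_L(e')$ in Type~3; symmetrically for $e'\in(r_2,k]\times\{c\}$. Taking, say, the higher of $\phi_L(e'),\phi_R(e')$ for an entry above $r_1$ guarantees both inequalities simultaneously: against a row-$(r_1-1)$ entry mapped by $\phi_L$ one compares with $\phi_L$, against one mapped by $\phi_R$ one compares with $\phi_R$, and in each case the chosen image is no lower than the corresponding $\phi$-image. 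This pointwise min/max choice is the actual content of the lemma, and without it the verification you call routine does not go through.
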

\begin{figure}
\centering
\includegraphics{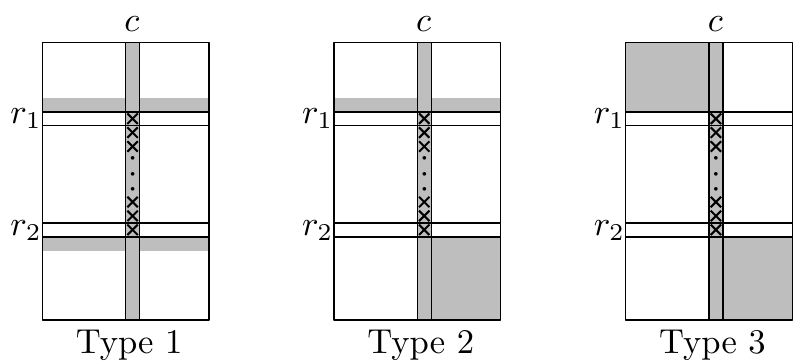}
\caption{Illustration of Lemma~\ref{lem-I}. The shaded areas correspond to 
possible locations of 1-entries. The 1-entries in cells marked by crosses are 
row-bounding.}
\label{fig-I}
\end{figure}
\begin{proof}
Let $P$ be a pattern satisfying the assumptions of the lemma, and let 
$e =(r,c)$ be its 1-entry, with $r\in[r_1,r_2]$. Let $M$ be a  
$P$-avoider. We claim that every row of $M$ has at most one 0-run critical 
for~$e$. For contradiction, suppose that row $i$ of $M$ has two 0-runs $z_L$ 
and $z_R$ critical for $e$, where $z_L$ is to the left of $z_R$. Let $f_L\in 
z_L$ and $f_R\in z_R$ be two 0-entries critical for~$e$ in the two 0-runs.

Let $\phi_L$ be an embedding of $P$ into $M\Delta f_L$ with $\phi_L(e)=f_L$, 
and $\phi_R$ be an embedding mapping $P$ into $M\Delta f_R$ 
with~$\phi_R(e)=f_R$. We will describe a partial embedding $\psi$ of $P$ 
into~$M$.

Since $f_L$ and $f_R$ are in distinct 0-runs, $M$ has a 1-entry $f$ that lies 
in row $i$ between $f_L$ and~$f_R$. We put $\psi(e)=f$. For 
any other 1-entry $e'\in\supp(P)\setminus\{e\}$, we will define $\psi(e')$ to 
be equal to either $\phi_L(e')$ or $\phi_R(e')$, by the following rules.

For a 1-entry $e'$ which is strictly to the left of column $c$, we let 
$\psi(e')=\phi_L(e')$ and for a 1-entry $e'$ strictly to the right of column 
$c$, we let $\psi(e)=\phi_R(e')$. 

It remains to deal with the 1-entries in column~$c$. For a 1-entry $e'$ in 
$[r_1,r)\times\{c\}$, we choose $\psi(e')$ to be the lower of the two entries 
$\phi_L(e')$ and $\phi_R(e')$, i.e., we choose the entry that has larger 
row-index. If $\phi_L(e')$ and $\phi_R(e')$ are in the same row, we choose 
$\psi(e')$ arbitrarily from the two options.

For a 1-entry $e'$ in $[1,r_1)\times\{c\}$, we distinguish two 
possibilities. If $P$ is of Type 1 or Type 2, that is, all 1-entries above row 
$r_1$ are in column $c$ or row $r_1-1$, we choose $\psi(e')$ to be the higher 
of the two entries $\phi_L(e')$ and~$\phi_R(e')$. If, on the other hand, $P$ is 
of Type 3, so all 1-entries above row $r_1$ are in columns $1,\dotsc,c$, we 
put $\psi(e')=\phi_L(e')$.

We proceed symmetrically for 1-entries below row~$r$. For a 1-entry $e'\in 
(r,r_2]\times\{c\}$, we choose $\psi(e')$ to be the higher of the two entries 
$\phi_L(e')$ and $\phi_R(e')$, breaking ties arbitrarily. For a 1-entry $e'\in 
(r_2,k]\times\{c\}$, if $P$ is of Type 1, we let $\psi(e')$ be the lower of 
$\phi_L(e')$ and $\phi_R(e')$, and if $P$ is of Type 2 or 3, we put 
$\psi(e')=\phi_R(e')$.

Note that we may deduce from Lemma~\ref{lem-columns} that $\phi_L$ maps all 
the entries in column $c$ of $P$ to entries strictly to the left of $f$, and 
$\phi_R$ maps entries from column $c$ to entries strictly to the right of~$f$.
We may then easily verify that the mapping $\psi$ is a partial embedding of $P$ 
into~$M$. This contradiction shows that the entry $e=(r,c)$ is row-bounding.
\end{proof}

\begin{lemma}\label{lem-I2} Let $P\in\Pat$ be a pattern with two rows $r_1<r_2$ 
and two columns $c_1<c_2$ of one of the following two types (see 
Figure~\ref{fig-I2}):
\begin{itemize}
\item[Type 1:]$\supp(P)\subseteq ([r_1,r_2]\times\{c_1\})\cup\big(
\{r_1,r_2\}\times ([c_1]\cup\{c_2\})\big)$.
\item[Type 2:]$\supp(P)\subseteq ([r_1,r_2]\times\{c_1\})\cup\big(
\{r_2\}\times 
([c_1]\cup\{c_2\})\big)\cup([r_1]\times\{c_2\})$.          
\end{itemize}
If $e=(r_1,c_1)$ is a 1-entry of $P$, then it is row-bounding.
\end{lemma}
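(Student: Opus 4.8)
The plan is to bound, for each $P$-avoiding matrix $M$, the number of $0$-runs in any single row of $M$ that are critical for $e=(r_1,c_1)$: I claim this number is at most $s$, where $s$ is the number of $1$-entries of $P$ in row $r_1$ (so $s\le c_1+1$); this gives $\rce{\Avm{P}}{e}\le s$, hence $e$ is row-bounding. Two quick reductions first. By Observation~\ref{obs-empty} we may delete empty boundary lines, so we may assume that $r_2$ is the bottommost nonempty row of $P$ (and, in Type~1, that $r_1$ is the topmost nonempty row and $c_2$ the rightmost nonempty column). If moreover $P$ has no $1$-entry strictly right of column $c_1$, then $c_1$ is the rightmost nonempty column and $e$ is row-bounding by the column-reversal of Corollary~\ref{cor-leftmost}; so we may assume column $c_2$ carries a $1$-entry of $P$.

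Suppose for contradiction that some $P$-avoider $M$ has a row $i$ with $s+1$ distinct $0$-runs critical for $e$. Let $f$ and $f'$ be the leftmost and rightmost $0$-entries of row $i$ that are critical for $e$; between them row $i$ of $M$ contains at least $s$ $1$-entries, say $f_1<f_2<\dots<f_s$ from left to right, and we may fix an embedding $\phi$ of $P$ into $M\Delta f$ with $\phi(e)=f$ and an embedding $\phi'$ of $P$ into $M\Delta f'$ with $\phi'(e)=f'$; note that $f,f_1,\dots,f_s,f'$ all lie in row $i$, in this left-to-right order. I would now build a partial embedding $\psi$ of $P$ into $M$, contradicting $M\in\Avm{P}$, as follows. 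Write $\gamma_1<\dots<\gamma_s$ for the columns of the $1$-entries of $P$ in row $r_1$, and send $(r_1,\gamma_j)$ to $f_j$ for each $j$ (so all of row $r_1$ is mapped into row $i$ of $M$; in particular $e=(r_1,\gamma_{j_0})$ goes to $f_{j_0}$). Every $1$-entry of $P$ lying strictly left of column $c_1$ but not in row $r_1$, together with the remaining part $(r_1,r_2]\times\{c_1\}$ of the vertical segment through $e$, is sent to its $\phi$-image; every $1$-entry of $P$ lying strictly right of column $c_1$ but not in row $r_1$ is sent to its $\phi'$-image. By Lemma~\ref{lem-columns}, applied to $\phi$ and to $\phi'$, and to the $0$-entries of $P$ in column $c_1$ as well, every entry of $P$ in column $c_1$ is mapped by $\phi$ into a column meeting $f$'s $0$-run (hence strictly left of $f_1$) and by $\phi'$ into a column meeting $f'$'s $0$-run (hence strictly right of $f_s$); in particular $\psi$ never lands on $f$ or $f'$, so $\psi$ maps $1$-entries of $P$ to $1$-entries of $M$.

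What remains is to verify the ordering and spacing requirements of a partial embedding for $\psi$. The horizontal comparisons follow from the previous paragraph: the $\phi$-part sits strictly left of $f_1$ with the correct extra horizontal distances coming from the monotonicity of $\phi$, the $\phi'$-part strictly right of $f_s$ similarly, and the image of row $r_1$ occupies $f_1,\dots,f_s$ in order, so any two images are correctly ordered and spaced. For the vertical comparisons the anchor is that $\phi(e)=f$ and $\phi'(e)=f'$ both lie in row $i$: thus $\phi$ sends every entry of $P$ below row $r_1$ to a row at least $i$ plus its $P$-distance below $r_1$, $\phi'$ sends every entry above row $r_1$ — which, after the reductions, can occur only in Type~2, as the column-$c_2$ entries above $e$ — to a row at most $i$ minus its $P$-distance above $r_1$, and row $r_1$ is sent into row $i$; the required inequalities between an above-entry, row $r_1$, and a below-entry then all telescope through $i$, and comparisons among entries handled by the same embedding are inherited because a full embedding restricted to a set of entries is again a partial embedding. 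Verifying the boundary conditions for the images of row $r_1$ is routine.

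The step I expect to be most delicate is making the vertical spacing work between entries handled by two different maps — an above-entry routed through $\phi'$ against a below-entry routed through $\phi$ — and the reason it works at all is precisely that both $\phi$ and $\phi'$ send $e$ into the single row $i$ of $M$, so that the ``above'' entries are pushed high and the ``below'' entries pushed low by exactly the amounts dictated by their distances from $r_1$. The secondary subtlety, keeping the Type~2 column-$c_2$ entries both strictly far enough right of $f_s$ and high enough above $i$, is resolved by applying Lemma~\ref{lem-columns} to the $0$-entries of $P$ in column $c_1$, not just the $1$-entries. Everything else is routine bookkeeping, with left/right and the roles of $\phi$ and $\phi'$ symmetric.
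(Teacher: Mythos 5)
Your patching scheme breaks on one specific pair of entries that your ``telescoping through row $i$'' argument does not cover: an entry $e''=(b,c_1)$ with $r_1<b<r_2$ (routed through $\phi$, since it lies in $(r_1,r_2]\times\{c_1\}$) versus the entry $e'=(r_2,c_2)$ (routed through $\phi'$, since it lies strictly right of column $c_1$ and not in row $r_1$). Both are \emph{below} row $r_1$, so the argument ``above-entry, row $r_1$, below-entry telescope through $i$'' says nothing about them, and the fallback ``comparisons within the same embedding are inherited'' does not apply either since they go through different embeddings. The partial-embedding condition requires the row of $\psi(e')=\phi'(e')$ to exceed the row of $\psi(e'')=\phi(e'')$ by at least $r_2-b$. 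But $\phi$ and $\phi'$ only agree in sending $e$ to row $i$; $\phi$ is free to place $(b,c_1)$ far below row $i+(b-r_1)$ while $\phi'$ places $(r_2,c_2)$ at exactly row $i+(r_2-r_1)$, in which case the inequality fails. Nothing in the $P$-avoidance of $M$ forces these two independent embeddings to be vertically compatible, so $\psi$ need not be a partial embedding and no contradiction is obtained. (The reductions you make do not remove this configuration: entries in $(r_1,r_2)\times\{c_1\}$ are explicitly permitted in both Type~1 and Type~2, and you have already argued $e'$ should be assumed to be a 1-entry.)

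This interaction between the middle of column $c_1$ and the entry $(r_2,c_2)$ is precisely why the paper's proof is substantially more involved: it uses $\ell(\ell+1)$ critical 0-runs, selects $\ell+1$ well-spaced ones, and then performs a case analysis on the columns and rows of the images $h_i=\phi_i(e')$ across all $\ell+1$ embeddings (some $h_i$ escaping far to the right; two consecutive $h_i$ in the wrong vertical order so their embeddings can be spliced; all $h_i$ forming a long copy of $\ovD_{\ell+1}$) in order to manufacture a consistent choice. Two embeddings $\phi,\phi'$ anchored at the leftmost and rightmost critical 0-runs are not enough, which is also why the bound you aim for, $\rce{\Avm{P}}{e}\le s$ with $s$ the number of 1-entries of $P$ in row $r_1$, is considerably stronger than the $\ell(\ell+1)$ bound the paper actually proves and should itself be treated with suspicion.
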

\begin{figure}[!ht]
\centering
\includegraphics[width=0.9\textwidth]{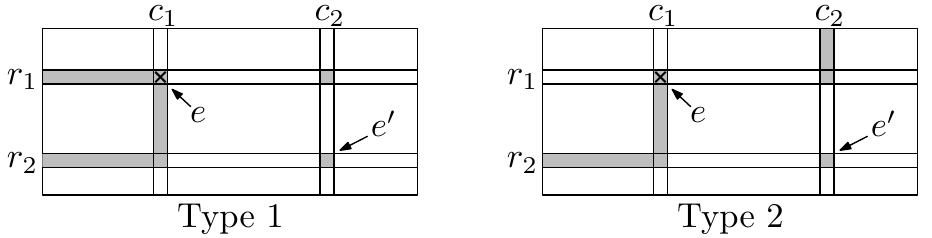}
\caption{Illustration of the proof of Lemma~\ref{lem-I2}. As before, the shaded 
areas correspond to possible locations of 1-entries, and the 1-entry $e$, 
marked by a cross, is row-bounding.}
\label{fig-I2}
\end{figure}
\begin{proof}
Suppose that $P\in\Pat$ satisfies the assumptions of the lemma, and that the 
entry $e=(r_1,c_1)$ is a 1-entry. Let $e'$ be the entry $(r_2,c_2)$ of~$P$.
Notice that if $e'$ is a 0-entry, we can deduce that $e$ is row-bounding by 
Lemma~\ref{lem-leftmost} (for Type~1) or by Lemma~\ref{lem-H} (for Type~2). 
Assume therefore that $e'$ is a 1-entry of~$P$.

Let $M\in\Mat$ be a $P$-avoider. We will show that every row of $M$ has at most 
$\ell(\ell+1)$ 0-runs critical for~$e$. Suppose that a row $r'$ of $M$ has more 
than $\ell(\ell+1)$ 0-runs critical for~$e$. Among these 0-runs, we select a 
subsequence $z_1,z_2,\dotsc,z_{\ell+1}$ numbered left to right, with the 
property that for each $i\in[\ell]$, $M$ has at least $\ell$ 1-entries in row 
$r'$ between $z_i$ and $z_{i+1}$, and $M$ also has at least $\ell$ 1-entries in 
row~$r'$ to the right of $z_{\ell+1}$.

For each $i\in[\ell+1]$, let $f_i$ be a 0-entry in $z_i$ critical for $e$, and 
let $\phi_i$ be an embedding of $P$ into $M\Delta f_i$ that maps $e$ to~$f_i$. 
For $i\in[\ell]$, let $w_i$ be the interval of entries that lie between $z_i$ 
and $z_{i+1}$ in row $r'$ of~$M$, and let $w_{\ell+1}$ be the interval of 
entries in row $r'$ to the right of~$z_{\ell+1}$. Recall that each $w_i$ 
contains at least $\ell$ 1-entries. Let $g_i$ be the leftmost entry in~$w_i$, 
which is necessarily a 1-entry, because $z_i$ is a maximal interval of 
0-entries. Finally, let $h_i=(p_i,q_i)$ be the 1-entry~$\phi_i(e')$ (recall that 
$e'=(r_2,c_2)$ is a 1-entry of~$P$). 

\begin{figure}
 \centerline{\includegraphics[width=\textwidth]{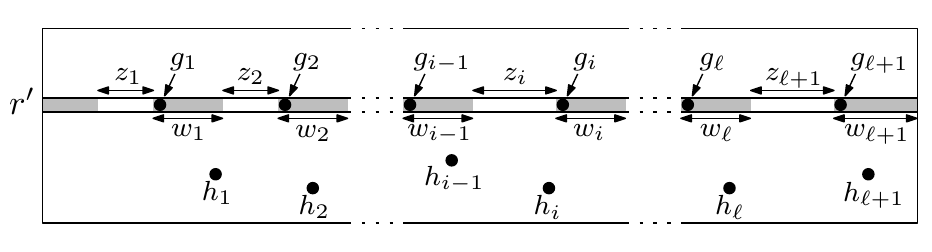}}
\caption{Illustration of the proof of Lemma~\ref{lem-I2}: the 
structure of a $P$-avoiding matrix with many intervals critical 
for $e$ in row~$r'$.}\label{figI2pf}
\end{figure}

Let us define a partial embedding $\psi$ of $P$ into~$M$. We let $\psi$ map the 
entry $(r_1,c_2)$ to the 1-entry $g_{\ell+1}$, and if $P$ is of Type 2, then for
every 1-entry $e''$ in the interval $[1,r_1)\times\{c_2\}$, we define 
$\psi(e'')=\phi_{\ell+1}(e'')$. Note that all the entries we mapped so far are 
to the right of~$f_{\ell+1}$.

To define $\psi$ for the remaining 1-entries of $P$, we will distinguish 
several situations, depending on the positions of the entries~$h_i=(p_i,q_i)$.

If, for some $i\in[\ell]$, the entry $h_i$ is to the right of the rightmost 
column of $w_i$, we put $\psi(e)=g_i$, and for every 1-entry $e''$ of $P$ for 
which $\psi$ has not yet been defined, we put $\psi(e'')=\phi_i(e'')$. To see 
that the mapping $\psi$ is a partial embedding of $P$ into $M$, it is enough to 
observe that all the 1-entries in column $c_2$ of $P$ are mapped by $\psi$ to 
entries strictly to the right of $w_{i}$, while by Lemma~\ref{lem-columns}, 
all the 1-entries in column $c_1$ are mapped to the columns intersecting the 
interval $z_i$, except for the entry $e$, which is mapped to~$g_i$. There are 
therefore at least $\ell-1$ columns which separate the image of any entry from 
column $c_1$ from the image of any entry from column~$c_2$. With this in mind, 
it is easy to check that $\psi$ is indeed a partial embedding.

Suppose that the situation from the previous paragraph does not occur, 
that is, for every $i\in[\ell]$, the entry $h_i$ is not to the right of 
the rightmost column intersecting~$w_i$. 
Since $h_i$ must by construction be to the right of the column containing 
$f_i$, we know that the column $q_i$ containing $h_i$ intersects either 
$z_i$ or~$w_i$. In particular, we have $q_1<q_2<\dotsb<q_{\ell+1}$. 

Assume now, that for some $i\in[\ell]$, the inequality $p_i\le p_{i+1}$ holds. 
We now complete the mapping $\psi$ as follows: we put $\psi(e)=g_i$,  
$\psi(e')=h_{i+1}$, and for all the 1-entries $e''$ of $P$ not yet mapped (i.e., 
the 1-entries in columns $1,\dotsc,c_1$ except $e$), we put 
$\psi(e'')=\phi_i(e'')$. The mapping $\phi$ is again a partial embedding of $P$ 
into~$M$.

It remains to deal with the situation when we have 
$p_1>p_2>\dotsb>p_\ell>p_{\ell+1}$, which means that the 1-entries 
$h_1,h_2,\dotsc,h_{\ell+1}$ form an image of the diagonal 
pattern~$\ovD_{\ell+1}$. We complete the mapping $\psi$ as follows: a 1-entry 
of the form $(r_1,j)$ for $j\le c_1$ is mapped to the entry $g_j$, a 1-entry of 
the form $(r_2,j)$ for any $j\in[\ell]$ is mapped to $h_j$, and any 1-entry 
$e''\in[r_1+1,r_2)\times\{c_1\}$ is mapped to $\phi_\ell(e'')$. Note that for 
$j<c_1$, the mapping $\psi$ maps the 1-entries in column $j$ to 1-entries in 
columns intersecting $z_j\cup w_j$, and for $j=c_1$, the 1-entries in column 
$j$ get mapped to columns intersecting $z_j\cup w_j\cup z_\ell$. 

In all cases, we found a partial embedding $\psi$ of $P$ into $M$, which is a 
contradiction. Therefore, each row of $M$ has at most $\ell(\ell+1)$ 0-runs 
critical for $e$, and $e$ is row-bounding.
\end{proof}

\paragraph{Row-boundedness of specific patterns.} We now have enough 
technical tools to establish that any pattern $P$ from $\Avm{\rbpat}$ is 
row-bounding. Recall from Proposition~\ref{pro:boundedints} that any 
$P\in\Avm{\rbpat}$ avoids $D_2$ or $\ovD_2$ or can be covered by three lines. 

We will first look at patterns that can be covered by fewer than three lines, 
and show that they are all row-bounding.

\begin{lemma}\label{lem-1row2col}
A pattern $P$ that has at most two nonempty columns or at most one nonempty row 
is row-bounding.
\end{lemma}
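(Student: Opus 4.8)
The plan is to deduce the lemma from the relative-row-boundedness machinery already developed, using the observation recorded just after the sandwich inequality $\max_{e\in\supp(P)}\rce{\Avm{P}}{e}\le\rc{\Avm{P}}\le\sum_{e\in\supp(P)}\rce{\Avm{P}}{e}$: the pattern $P$ is row-bounding if and only if every one of its $1$-entries is row-bounding. So it will suffice to check, for each $1$-entry $e$ of $P$, that $e$ is row-bounding, and I would treat the two hypotheses of the lemma separately. (If $P$ has no $1$-entry at all, the statement is vacuous, so assume it has at least one.)

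Suppose first that $P$ has at most one nonempty row, and let $r$ be that row, so $\supp(P)\subseteq\{r\}\times[\ell]$. I would apply Lemma~\ref{lem-leftmost} with this row $r$ and with the column index $c:=\ell$; since $\supp(P)\subseteq\{r\}\times[\ell]$, the inclusion required by that lemma holds trivially. Its conclusion is that every $1$-entry of $P$ lying in $\{r\}\times[\ell]$ is row-bounding, and since every $1$-entry of $P$ lies in row $r$, this handles this case.

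Suppose now that $P$ has at most two nonempty columns. Corollary~\ref{cor-leftmost} immediately gives that every $1$-entry in the leftmost nonempty column of $P$ is row-bounding. If $P$ has a second nonempty column, let $P'$ be the pattern obtained from $P$ by reversing the order of its columns; then the rightmost nonempty column of $P$ corresponds to the leftmost nonempty column of $P'$, so Corollary~\ref{cor-leftmost} applied to $P'$, combined with the fact that reversing the column order preserves row-boundedness of the corresponding $1$-entries, shows that every $1$-entry in the rightmost nonempty column of $P$ is also row-bounding. Since $P$ has at most two nonempty columns, every $1$-entry of $P$ lies in one of these two columns, so all $1$-entries of $P$ are row-bounding, as needed.

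I do not expect a genuine obstacle here: this lemma is essentially a repackaging of Lemma~\ref{lem-leftmost} and Corollary~\ref{cor-leftmost}. The only two points that need a moment's care are (i) invoking Lemma~\ref{lem-leftmost} with $c$ taken to be the \emph{last} column rather than the leftmost nonempty one, which is precisely what lets it cover an entire single nonempty row in one shot, and (ii) transferring row-boundedness from the leftmost to the rightmost nonempty column in the two-column case by passing to the column-reversal of $P$. Both are routine.
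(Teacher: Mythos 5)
Your proof is correct and follows essentially the same route as the paper's (which is a one-liner invoking Lemma~\ref{lem-leftmost} together with the column-reversal symmetry recorded just before it); you have simply written out the details, including the observation that taking $c=\ell$ in Lemma~\ref{lem-leftmost} dispatches the single-nonempty-row case in one shot.
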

\begin{proof}
It follows from Lemma~\ref{lem-leftmost} and trivial symmetries that every 
1-entry of $P$ is row-bounding, hence $P$ is row-bounding.
\end{proof}

\begin{lemma}
\label{lem-tworows2}
If $P\in\Pat$ is a pattern with two nonempty rows, then $P$ is row-bounding.
\end{lemma}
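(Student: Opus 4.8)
The plan is to reduce the two-nonempty-row case to the structural lemmas already proved (Lemmas~\ref{lem-leftmost}, \ref{lem-H}, \ref{lem-I}, \ref{lem-I2}), by a careful case analysis on the shape of $P$. First, using Observation~\ref{obs-empty} we may strip away empty boundary rows and columns, so without loss of generality the topmost and bottommost rows and the leftmost and rightmost columns of $P$ are nonempty; then the two nonempty rows are precisely rows $1$ and $k$ (so $k=2$, or there are empty middle rows that we have also removed, leaving $P$ with exactly two rows, say rows $r_1<r_2$ after we keep track of the vertical spacing — but the boundedness of a $1$-entry is invariant under inserting empty rows, so we may as well take $k=2$). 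By Corollary~\ref{cor-leftmost}, every $1$-entry in the leftmost nonempty column is row-bounding, and by the top-bottom and left-right reflection symmetries noted before Lemma~\ref{lem-leftmost}, the same holds for the rightmost nonempty column. So it remains to handle $1$-entries that lie strictly between the leftmost and rightmost nonempty columns.

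Next I would analyse such an interior $1$-entry $e=(i,j)$ of $P$, where $i\in\{1,2\}$. The key observation is that the column $j$ of $e$ may or may not contain the other row's entry. I would split into cases according to whether column $j$ has one or two $1$-entries, and how the $1$-entries in the two rows interleave with $j$. In each case the goal is to exhibit a row $r$, columns $c_1<c_2$ (or rows $r_1<r_2$ and a column $c$), and check that $P$ — possibly after a reflection — fits one of the three "Type" configurations in Lemma~\ref{lem-H}, or one of the configurations in Lemma~\ref{lem-I} or Lemma~\ref{lem-I2}. Concretely: if $e$ is the unique $1$-entry in its column, the interval of columns between the leftmost and rightmost nonempty column that contain a single entry each behaves like the horizontal segment in Lemma~\ref{lem-H}, and the two halves of $P$ (to the left and right of that segment) each lie in a single row, which is exactly Type~1 of Lemma~\ref{lem-H}; if one side degenerates, Types~2 or~3, or Lemma~\ref{lem-I2}, apply after reflection. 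The finitely many combinatorial configurations of a two-row pattern can be exhausted this way.

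The main obstacle I anticipate is bookkeeping: verifying that every possible arrangement of the two nonempty rows of $P$ (which entries of row $1$ lie left of, at, or right of which entries of row $2$, and which columns are "mixed" versus "pure") really does match one of the four lemmas up to the allowed symmetries, without leaving a stray case uncovered. This is where a picture and a disciplined enumeration — say, by the position of the leftmost and rightmost $1$-entries of each row relative to each other — are essential. Once the case analysis is set up correctly, each individual case is a one-line citation of the appropriate lemma. Since $\max_{e\in\supp(P)}\rce{\Avm{P}}{e}\le\rc{\Avm{P}}\le\sum_{e\in\supp(P)}\rce{\Avm{P}}{e}$, proving every $1$-entry row-bounding proves $P$ row-bounding, completing the proof.
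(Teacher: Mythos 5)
Your approach is genuinely different from the paper's and, unfortunately, it does not work. The paper does not reduce Lemma~\ref{lem-tworows2} to Lemmas~\ref{lem-leftmost}, \ref{lem-H}, \ref{lem-I}, \ref{lem-I2}; instead it gives a direct, self-contained argument showing $\rce{\Avm{P}}{e}\le\ell^2$ for every 1-entry $e$, by partitioning the columns of $M$ into intervals $X_1,\dots,X_{\ell^2}$ between consecutive critical 0-runs, considering the blocks $B_i=M[[r+k-1,m]\times X_i]$, locating $\ell$ consecutive empty blocks, and then building a partial embedding of $P$ into $M$.

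The gap in your reduction is concrete. Take $P$ to be the all-ones $2\times 5$ pattern and $e=(1,3)$. This $e$ is in neither the leftmost nor the rightmost column, so Corollary~\ref{cor-leftmost} is of no help. Every column of $P$ contains a 1-entry in \emph{both} rows, so your ``interval of columns containing a single entry each'' is empty and Lemma~\ref{lem-H} cannot be invoked with any $r,c_1<c_2$: any interval $[c_1,c_2]$ covering row $r$ is $[1,5]$, and then the interior column $2$ has the 1-entry in the other row. Lemma~\ref{lem-I} requires the row of $e$ to contain no 1-entry besides $e$ itself in the relevant strip, which fails. Lemma~\ref{lem-I2} forces $r_1=1$, $c_1=3$, $r_2=2$, $c_2\in\{4,5\}$, and the Type~1/Type~2 supports then omit one of columns $4,5$ while $P$ has a 1-entry in every column; the left--right reflection fixes $(1,3)$, and the top--bottom reflection sends it to $(2,3)$ which leaves no valid $r_2>r_1$. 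So none of the four criteria, under any of the allowed symmetries, certify that $(1,3)$ is row-bounding, even though it is.

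There is a second, subtler problem: your reduction to $k=2$ appeals to the claim that row-boundedness of a 1-entry is preserved when an empty \emph{interior} row is deleted. Observation~\ref{obs-empty} only justifies removing empty \emph{boundary} lines; the interior case is singled out as an open problem in the paper's final section. The paper sidesteps this entirely by keeping $k$ general (only the first and last rows are assumed nonempty) throughout its direct argument. In short, a reduction to the four structural lemmas cannot cover all two-row patterns, and a fresh embedding argument of the kind in the paper is actually needed.
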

\begin{proof}
We will show that for every 1-entry $e$ of $P$, we have $\rce{\Avm{P}}{e}\le 
\ell^2$.

In view of Observation~\ref{obs-empty}, we may assume that only the first row 
and the last row of $P$ are nonempty. Let $e$ be a 1-entry of $P$, and suppose 
without loss of generality that $e$ is in the first row, i.e., $e=(1,c)$ for 
some~$c$. 

Given a matrix~$M\in\Avm{P}$, consider an arbitrary row~$r$ of~$M$. For 
contradiction, suppose that the row $r$ has $\ell^2+1$ distinct 0-runs
$z_1,\dots,z_{\ell^2+1}$ critical for~$e$, numbered left to right. Let $c_i$ 
denote the leftmost 
column intersecting $z_i$, and for $i\le \ell^2$, let $X_i$ denote the set 
of column indices $[c_i, c_{i+1})$. Observe that for every $i\le\ell^2$, $M$ 
has at least one 1-entry in the interval $\{r\}\times X_i$.

\begin{figure}
 \centerline{\includegraphics[width=\textwidth]{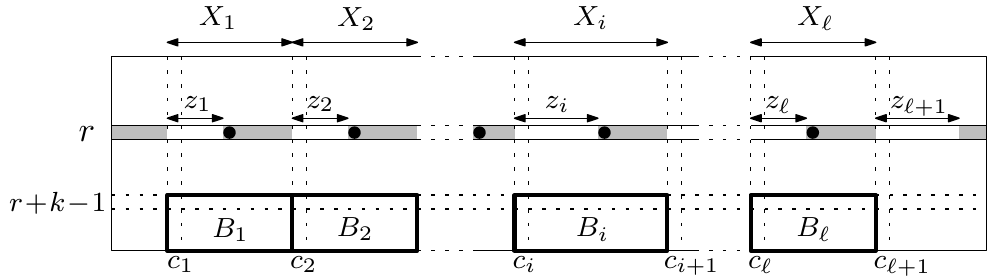}}
\caption{The matrix $M$ considered in the proof of 
Lemma~\ref{lem-tworows2}.}\label{fig-tworows}
\end{figure}

Let $B_i$ be the submatrix $M[[r+k-1,m]\times X_i]$ of~$M$ (see 
Figure~\ref{fig-tworows}). Note that if there are at least $\ell$ distinct 
values of $i$ for which $B_i$ contains at least one 1-entry, then the matrix $M$ 
contains the pattern~$P$. 

Suppose therefore that $B_i$ is empty for each $i$ up to at most $\ell-1$ 
exceptions. In particular, there is an index $j\in[\ell^2]$ such that the 
$\ell$ consecutive submatrices $B_j, B_{j+1}, \dotsc, B_{j+\ell-1}$ are all 
empty.

Recall that $e=(1,c)$ is a 1-entry of $P$, and that all the 1-entries of $P$ 
are in rows 1 and~$k$. Let $c'$ be a column index such that $e'=(k,c')$ is a 
1-entry of $P$, and $|c-c'|$ is as small as possible. Suppose without loss 
of generality that $c\le c'$ and let $d:=c'-c$. 

Let $f$ be a 0-entry in $z_j$ critical for $e$, and let $\phi$ be an embedding 
of $P$ into $M\Delta f$ that maps $e$ to~$f$. Note that by 
Lemma~\ref{lem-columns}, $\phi$ maps the entries in column $c$ of $P$ to 
entries in columns intersecting $z_j$, and in particular, the entry $(k,c)$ is 
mapped inside~$B_j$. Since $B_j$ is empty, $(k,c)$ is a 0-entry and 
in particular, $c'$ is greater than~$c$.

It follows that the 1-entry $e'=(k,c')$ is mapped strictly to the right of 
the column containing~$f$, and since $B_j,\dotsc,B_{j+\ell-1}$ are all empty, 
$e'$ must be mapped to the right of the columns in the set~$X_{j+\ell-1}$. 

We now define a partial embedding $\psi$ of $P$ into $M$ as follows: the $d+1$ 
entries in $P[\{1\}\times[c,c']]$ get mapped into $M[\{r\}\times (X_j\cup 
X_{j+1}\cup\dotsb\cup X_{j+d})]$ by~$\psi$ (recall that $\{r\}\times X_i$ 
contains at least one 1-entry for 
each $i$). The remaining 1-entries of $P$ are mapped by $\psi$ in the same way 
as by~$\phi$. Then $\psi$ is a partial embedding of $P$ into $M$, a 
contradiction.
\end{proof}

\begin{lemma}\label{lem-rowcol}
A pattern $P$ that can be covered by one row and one column is row-bounding.
\end{lemma}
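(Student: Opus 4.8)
\textbf{Proof plan for Lemma~\ref{lem-rowcol}.}

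The plan is to use the structural tools already developed, in particular Lemma~\ref{lem-leftmost}, Lemma~\ref{lem-H}, and Lemma~\ref{lem-I}, and to dispatch the remaining 1-entries of $P$ by a symmetry argument. Suppose $P$ can be covered by one row $r$ and one column $c$, so that $\supp(P)\subseteq(\{r\}\times[\ell])\cup([k]\times\{c\})$. By Observation~\ref{obs-empty}, we may delete any empty rows or columns at the boundary and restrict attention to the ``core'' of $P$; after this reduction, every nonempty line of $P$ either passes through the intersection cell $(r,c)$ or is a subset of row $r$ or column $c$. The 1-entries of $P$ fall into three groups: those in row $r$ strictly left of column $c$, those in row $r$ strictly right of column $c$, those in column $c$ strictly above row $r$, those in column $c$ strictly below row $r$, and possibly the corner entry $(r,c)$ itself.

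First I would handle the 1-entries of $P$ lying in row $r$. Consider such an entry $e=(r,j)$ with $j\le c$. The support of $P$ is contained in $(\{r\}\times[\ell])\cup([k]\times[c,\ell])$ — indeed, column $c$ is the only nonempty column other than those in row $r$, and we may take the union over all columns $\ge c$ — so Lemma~\ref{lem-leftmost} applies directly and shows $e$ is row-bounding. For a 1-entry $e=(r,j)$ with $j>c$, I would apply the mirror-image version of Lemma~\ref{lem-leftmost} obtained by reversing the order of columns, using the trivial symmetry remarked before Lemma~\ref{lem-leftmost}: under column reversal, row $r$ is still a single row, column $c$ becomes a single column $c'$, and $\supp(P)$ becomes contained in $(\{r\}\times[\ell])\cup([k]\times[1,c'])$, which is the column-reversed form of the hypothesis of Lemma~\ref{lem-leftmost}. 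Hence every 1-entry of $P$ in row $r$ (including the corner $(r,c)$) is row-bounding.

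It remains to handle the 1-entries of $P$ in column $c$ that are not in row $r$. Take such an entry $e=(i,c)$ with, say, $i<r$ (the case $i>r$ being handled by the up-down reversal symmetry, which preserves column $c$ and swaps the roles of ``above'' and ``below''). I would invoke Lemma~\ref{lem-I} with the row interval $[r_1,r_2]=[1,r-1]$ — note that for every row $r'$ in this interval, $P$ has no 1-entry except possibly $(r',c)$, since the only nonempty rows of $P$ other than the lines through $(r,c)$ and column $c$ are confined to row $r$ — and with the distinguished column $c$. The entry $(r,c)$ and the rest of row $r$ and column $c$ below row $r_1$ lie in the submatrix $P[(r_2,k]\times[c,\ell]]$ together with column $c$; more precisely, Type~2 of Lemma~\ref{lem-I} (all 1-entries above row $r_1$ in column $c$ or row $r_1-1$ — vacuously true here since $r_1=1$ — and all 1-entries below row $r_2$ in $P[(r_2,k]\times[c,\ell]]$) is satisfied because every 1-entry of $P$ in row $\ge r$ lies in row $r$ (hence in columns $\ge c$ only if it is weakly right of $c$) or in column $c$; one should check that entries of row $r$ strictly left of $c$ still lie in $P[(r_2,k]\times[c,\ell]]$, which forces a small case split or an appeal to the column-reversed Type variant. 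The honest way to cover all sub-cases is: depending on whether the nonempty portion of row $r$ extends left of $c$, right of $c$, or both, one of the three Types of Lemma~\ref{lem-I} (possibly after column reversal) applies, and in every case $e=(i,c)$ is row-bounding. Combining the two parts, every 1-entry of $P$ is row-bounding, and therefore by the inequality $\rc{\Avm{P}}\le\sum_{e\in\supp(P)}\rce{\Avm{P}}{e}$ the pattern $P$ is row-bounding.

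\textbf{Main obstacle.} The technical friction is entirely in matching $P$ to the precise ``Type'' hypotheses of Lemma~\ref{lem-H} and Lemma~\ref{lem-I}: these lemmas are stated with asymmetric conditions (e.g.\ ``below row $r_2$ in $P[(r_2,k]\times[c,\ell]]$'' but ``above row $r_1$ in column $c$ or row $r_1-1$''), whereas a cross-shaped $P$ may have its row-part extending in either or both horizontal directions from $c$. I expect the proof to proceed by a short case analysis on the location of row $r$'s 1-entries relative to column $c$ (entirely left of $c$, entirely right, or straddling), reducing each case — after possibly reversing columns and/or rows — to one of the already-proven Types. Once the right reduction is identified in each case, the rest is immediate from Lemmas~\ref{lem-leftmost}, \ref{lem-H}, and~\ref{lem-I}.
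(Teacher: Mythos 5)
Your approach matches the paper's proof: Lemma~\ref{lem-leftmost} (with column reversal) for the 1-entries in row~$r$, and Lemma~\ref{lem-I} (with up-down reversal) for the 1-entries in column~$c$. The case split you anticipate at the end is unnecessary: taking $r_1=1$, $r_2=r-1$, Type~1 of Lemma~\ref{lem-I} applies directly, since every 1-entry below row $r_2=r-1$ lies in column~$c$ or in row $r=r_2+1$ by the covering hypothesis, regardless of which side of column~$c$ the row-$r$ 1-entries fall on.
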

\begin{proof}
Suppose that $P\in\Pat$ is covered by row $r$ and column $c$. By 
Lemma~\ref{lem-leftmost}, all the 1-entries in $P[\{r\}\times[c]]$ are 
row-bounding, and by symmetry, the 1-entries in $P[\{r\}\times[c,\ell]]$ are 
row-bounding as well. By Lemma~\ref{lem-I}, the 1-entries in 
$P[[1,r)\times\{c\}]$ and $P[(r,k]\times\{c\}]$ are also row-bounding.
\end{proof}

Lemmas~\ref{lem-1row2col}, \ref{lem-tworows2} and \ref{lem-rowcol} imply that 
any pattern that can be covered by two lines is row-bounding. We now proceed 
with the remaining cases of Proposition~\ref{pro:boundedints}.

\begin{lemma}
\label{lem-walkpat}
A pattern $P\in\Pat$ that avoids $D_2$ or $\ovD_2$ is row-bounding.
\end{lemma}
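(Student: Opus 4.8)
The plan is to treat the $\ovD_2$-avoiding patterns directly and to reduce the $D_2$-avoiding case to it by a symmetry. If $P$ avoids $D_2$, I would reverse the order of its columns: this operation turns $D_2$ into $\ovD_2$ and is compatible with interval-minor containment, so the resulting pattern avoids $\ovD_2$; moreover, since reversing columns maps rows to rows, it preserves row-boundedness of patterns (by the symmetries noted just before Lemma~\ref{lem-leftmost}). Hence it suffices to prove the lemma under the assumption that $P$ avoids $\ovD_2$.

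So assume $P$ avoids $\ovD_2$. By Proposition~\ref{pro-diag} and the remark following it, $P$ is a decreasing matrix. The structural fact I would extract is the following ``staircase'' property: if $r_1<r_2<\dotsb<r_p$ are the nonempty rows of $P$, and for each $i$ we let $L_i\le R_i$ denote the smallest and largest column index of a 1-entry in row $r_i$, then $R_i\le L_{i+1}$ for every $i<p$ — otherwise the 1-entries $(r_i,R_i)$ and $(r_{i+1},L_{i+1})$ would form an image of $\ovD_2$. From this, I would deduce that for each fixed $i$: every 1-entry of $P$ strictly above row $r_i$ lies in columns $[1,L_i]$; every 1-entry of $P$ strictly below row $r_i$ lies in columns $[R_i,\ell]$; and every column strictly between $L_i$ and $R_i$ carries 1-entries only in row $r_i$. (All three follow from the staircase inequalities applied repeatedly.)

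Given these observations, I would argue that every 1-entry $e=(r_i,c)$ of $P$ (which satisfies $L_i\le c\le R_i$) is covered by one of the earlier criteria. If $L_i<R_i$, then $P$ satisfies the hypotheses of Lemma~\ref{lem-H} of Type~3 with $r=r_i$, $c_1=L_i$, $c_2=R_i$, so $e$ is row-bounding. If $L_i=R_i$, then row $r_i$ contains the single 1-entry $(r_i,c)$, and $P$ satisfies the hypotheses of Lemma~\ref{lem-I} of Type~3 with $r_1=r_2=r_i$ and column $c$, so again $e$ is row-bounding. Since each 1-entry of $P$ is then row-bounding, the inequality $\rc{\Avm{P}}\le\sum_{e\in\supp(P)}\rce{\Avm{P}}{e}$ shows that $P$ itself is row-bounding.

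I do not expect a serious obstacle: the proof is essentially a matter of recognising that a decreasing pattern has a staircase of row-supports and that the Type~3 forms of Lemmas~\ref{lem-H} and~\ref{lem-I} were tailored to precisely this configuration. The only points requiring care are the degenerate case of a row containing a single 1-entry (which is why Lemma~\ref{lem-I} is needed alongside Lemma~\ref{lem-H}), and checking that reversing the column order is a legitimate reduction — both of which are routine.
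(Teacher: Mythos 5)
Your proposal is correct and takes essentially the same approach as the paper, which (very tersely) states that each 1-entry of a decreasing pattern is row-bounding by Lemma~\ref{lem-H} (Type~3) or Lemma~\ref{lem-I} (Type~3); your staircase observations and the explicit case split on $L_i<R_i$ versus $L_i=R_i$ merely fill in the verification of those lemmas' hypotheses, which the paper leaves to the reader.
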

\begin{proof}
Suppose that $P$ avoids $\ovD_2$, the other case being symmetric.
From Proposition~\ref{pro-diag}, we know that $P$ is a decreasing pattern. 
Every 1-entry of $P$ is row-bounding either by Lemma~\ref{lem-H} (Type 3), or by 
 Lemma~\ref{lem-I} (Type 3), and therefore $P$ is row-bounding. 
\end{proof}

What follows is the last and the most difficult case of our analysis, which 
deals with patterns that are not increasing or decreasing and cannot be 
covered by two lines. 

\begin{lemma}
\label{lem-2types}
Let $P\in\Avm{\rbpat}$ be a pattern that contains both $D_2$ and $\ovD_2$, 
and that cannot be covered by two lines. Then $P$ can be transformed by a 
rotation or a reflection to a pattern $P_0$ of one of these two types (see 
Figure~\ref{fig-2types}).
\begin{itemize}
\item[Type 1:] $P_0$ has three rows $r<r'<r''$ and two columns $c<c'$ with 
\[
 \supp(P_0)\subseteq \big(\{r'\}\times [c,c'] \big)\cup\{(r,c), (r'',c), 
(r,c'),(r'',c')\}.
\]
\item[Type 2:] $P_0$ has two rows $r<r'$ and two columns $c<c'$ with
\[
\supp(P_0)\subseteq \big( \{r\}\times[c,c']\big)\cup \big( 
\{r'\}\times[c]\big)\cup \big([r]\times \{c'\}\big)\cup\{(r',c')\}.
\]
\end{itemize}
\end{lemma}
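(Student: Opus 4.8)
The plan is to first normalise the minimum line cover and a witnessing matching, and then run a region-by-region analysis governed by the forbidden triples.

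\emph{Reduction to a canonical cover and matching.} Since $P$ contains both $D_2$ and $\ovD_2$, Proposition~\ref{pro:boundedints} forces $P$ to be coverable by three lines; as $P$ is not coverable by two, its minimum line cover has size exactly three, and by Fact~\ref{fac-eger} the matrix $P$ has a set of three 1-entries in general position (pairwise in distinct rows and columns) but no such set of four. The four patterns in $\rbpat$ are precisely the non-monotone $3\times 3$ permutation matrices, and avoiding a permutation matrix as interval minor is the same as avoiding it as submatrix; hence any three 1-entries of $P$ in general position must form a copy of $D_3$ or of $\ovD_3$. After a horizontal reflection we may assume they form $D_3$, at positions $p_1=(a_1,b_1)$, $p_2=(a_2,b_2)$, $p_3=(a_3,b_3)$ with $a_1<a_2<a_3$ and $b_1<b_2<b_3$; we choose them so that the columns $b_1,b_2,b_3$ are as extreme as possible. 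A minimum cover has one of the shapes (three rows), (two rows and one column), (one row and two columns), (three columns); transposition swaps the first with the last and the middle two, so after transposing we may assume some minimum cover consists either of three rows or of two rows and one column. These two possibilities will lead, respectively, to Type~1 and to Type~2.

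\emph{Region analysis.} The key step is to show that $p_1,p_2,p_3$ squeeze all 1-entries of $P$ into a narrow configuration. Splitting the rows into the four bands determined by $a_1,a_2,a_3$, one checks, by forming a candidate 1-entry together with a suitable pair among $p_1,p_2,p_3$ and invoking that every such triple in general position is monotone, that every 1-entry of $P$ outside the rows $a_1,a_2,a_3$ sits on an increasing staircase: above row $a_1$ only columns $\le b_1$ appear, strictly between $a_1$ and $a_2$ only columns in $[b_1,b_2]$, strictly between $a_2$ and $a_3$ only columns in $[b_2,b_3]$, and below $a_3$ only columns $\ge b_3$. The same mechanism, now also pairing a candidate entry of one of the rows $a_1,a_2,a_3$ with a neighbouring staircase entry, confines each of these three ``special'' rows to a bounded union of columns, with the middle row $a_2$ contained in $[b_1,b_3]$. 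In the ``two rows and one column'' cover case there is in addition the strong constraint that every row other than the two cover rows meets only the cover column, so the third matching row carries a single 1-entry.

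\emph{Collapsing and main obstacle.} It remains to use that $P$ contains $\ovD_2$: given the staircase shape, a copy of $\ovD_2$ can only be realised by a pair of 1-entries involving one of the special rows, and its presence forces that row to carry an entry far to one side; feeding that extremal entry back into the triple argument collapses the remaining freedom, so that in the three-rows case the two outer rows become supported on just two columns while the middle row spans the interval between them (Type~1, up to a further reflection), and in the two-rows-one-column case one arrives at the interval-plus-corners shape of Type~2 (up to a reflection or $90^\circ$ rotation). The difficulty is entirely in the bookkeeping of this analysis: there are several sub-cases according to which bands or special rows realise the mandatory $\ovD_2$, and the monotonicity constraints only become tight enough to isolate the two claimed shapes when the matching $p_1,p_2,p_3$ is picked extremally (via leftmost/rightmost, respectively topmost/bottommost, 1-entries) and when one tracks carefully which reflection or rotation finally returns the surviving configuration to the normalised Type~1 / Type~2 form; a secondary subtlety is to keep the three special rows genuinely distinct, which is exactly where the hypothesis of non-coverability by two lines is used.
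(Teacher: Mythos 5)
Your plan mirrors the paper's proof closely: use Fact~\ref{fac-eger} to extract three 1-entries in general position, observe that $\rbpat$ is exactly the set of non-monotone $3\times 3$ permutation matrices so that the triple must form $D_3$ or $\ovD_3$ (normalised to $D_3$ by a reflection), pick them with extremal columns, invoke Proposition~\ref{pro:boundedints} for a three-line cover, and split into the cases of a three-row cover versus a two-rows-and-one-column cover (the remaining shapes being transposes), then use forbidden-pattern constraints and the mandatory $\ovD_2$ to force the shape of the support. This is indeed the paper's strategy.

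Two caveats. First, your forecast that the three-row cover leads to Type~1 and the two-rows-one-column cover leads to Type~2 is not accurate: in the paper's case analysis both cover shapes produce matrices of both types, up to rotation and reflection (e.g.\ case~Ia of the three-row cover already yields a mirror image of a Type~2 matrix when the interval $\gamma$ is nonempty, and the cover by rows $r_1,r_3$ and column $c_2$ yields a rotated Type~1 matrix when its interval $\delta$ is empty), so which type appears is governed by the finer sub-cases, not by the cover shape. Second, you explicitly defer the sub-case bookkeeping --- which positions can simultaneously carry 1-entries, which extremal choices make the constraints tight, and which symmetry brings each surviving configuration into normal form --- but this bookkeeping is the actual content of the lemma's proof; as it stands the proposal is a correct outline rather than a complete argument.
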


\begin{figure}
 \centerline{\includegraphics[width=0.9\textwidth]{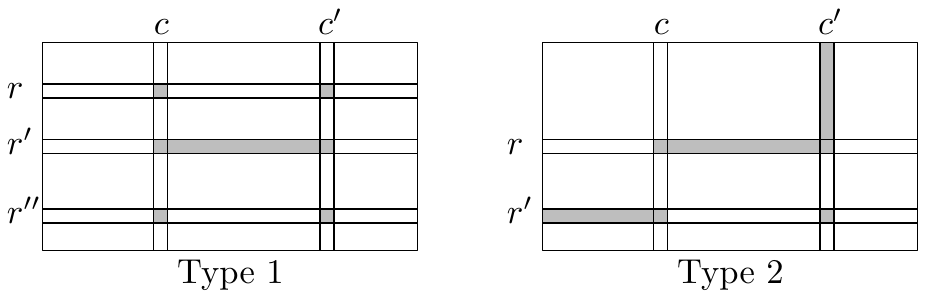}}
\caption{The two types of $\rbpat$-avoiders considered in 
Lemma~\ref{lem-2types}. The shaded areas are the possible 
positions of 1-entries.}\label{fig-2types}
\end{figure}

\begin{proof}
Let $P\in\Pat$ be a pattern satisfying the assumptions of the lemma. Since $P$ 
cannot be covered by two lines, by Fact~\ref{fac-eger}, $P$ contains three 
1-entries $e_1=(r_1,c_1)$, $e_2=(r_2,c_2)$ and $e_3=(r_3,c_3)$, with 
$r_1<r_2<r_3$, and such that the columns $c_1,c_2,c_3$ are all distinct. Since 
$P$ avoids the patterns from $\rbpat$, we must have either $c_1<c_2<c_3$ or 
$c_1>c_2>c_3$. Without loss of generality, assume $c_1<c_2<c_3$.

By Proposition~\ref{pro:boundedints}, $P$ can be covered by three lines. 
Suppose 
first that the three lines that cover $P$ are the rows $r_1$, $r_2$ and~$r_3$.
Suppose moreover, that the three 1-entries were chosen in such a way that $c_1$ 
is as large as possible, while $c_2$ and $c_3$ are as small as possible; see 
Figure~\ref{fig-2typespf} (left). In particular, row $r_1$ of $P$ has no 
1-entry in any of the columns $[c_1+1,c_2)$, otherwise we could choose a larger 
value of~$c_1$. Similarly, row $r_2$ has no 1-entry in columns $[c_1+1,c_2)$ 
and row $r_3$ has no 1-entry in columns $[c_2+1,c_3)$.

Moreover, since $P$ avoids the four patterns from the set $\rbpat$, row $r_1$ 
has no 1-entry in columns $[c_2+1,c_3)$ or $(c_3,\ell]$, row $r_2$ has no 
1-entry in columns $[1,c_1)$ or $(c_3,\ell]$, and row $r_3$ has no 1-entry 
in columns $[1,c_1)$ or~$[c_1+1,c_2)$.

Therefore, apart from the three 1-entries $e_i$, a 1-entry of $P$ can appear in 
one of the three intervals $\alpha=\{r_1\}\times[1,c_1)$,
$\beta=\{r_2\}\times(c_2,c_3]$ and $\gamma=\{r_3\}\times(c_3,\ell]$, or be 
equal to one of the five entries $a=(r_2,c_1)$, $b=(r_3,c_1)$, $c=(r_1,c_2)$, 
$d=(r_3,c_2)$ or $e=(r_1,c_3)$; see Figure~\ref{fig-2typespf} (left).
Note that $a$ and $c$ cannot be simultaneously equal to 1, otherwise they 
would form a forbidden pattern with $e_3$, and similarly, if $\beta$ contains a 
1-entry then $d=0$, if $\alpha$ contains a 1-entry then $b=0$, and if $\gamma$ 
contains a 1-entry then~$e=0$.

Since $P$ contains a copy of $\ovD_2$, at least one of $b$ and $e$ must be a 
1-entry. Let us go through the cases that may occur.

\paragraph{Case I: $b=1$.} If $b=1$ then $\alpha$ is empty. We have two 
subcases:
\begin{itemize}
\item[Ia: $\beta$ contains a 1-entry.] Then $c=0$ and $d=0$. If 
$\gamma$ is empty, then $P$ is a Type 1 matrix, with $c=c_1$, 
$c'=c_3$, and $(r,r',r'')=(r_1,r_2,r_3)$. If $\gamma$ is nonempty, then $e=0$, 
and $P$ is a mirror image of a Type 2 matrix, with $(r,r')=(r_2,r_3)$ and 
$(c,c')=(c_3,c_1)$.
\item[Ib: $\beta$ is empty.] If $\gamma$ is nonempty, then $e=0$ and since at 
most one of $a$ and $c$ is nonempty, rotating $P$ counterclockwise by 90 
degrees yields a Type 2 matrix. If $\gamma$ is empty, then either $a=0$ and $P$ 
is the transpose of a Type 1 matrix, or $a=1$, and therefore $c=0$, and at least 
one of $d$ and $e$ is a 0-entry, resulting in a Type 1 matrix or a rotated Type 2 
matrix.
\end{itemize}
\paragraph{Case II: $b=0$.} If $b=0$, then $e=1$, otherwise $P$ would avoid 
$\ovD_2$. Consequently, $\gamma$ is empty. If $\beta$ were 
empty as well, then $P$ would be symmetric to a matrix from case I by a 
180-degree rotation. We may therefore assume that $\beta$ is nonempty, and 
hence $d=0$. At most one of $a$ and $c$ can be a 1-entry, and in either case we 
get an upside-down copy of a Type 2 matrix.

\begin{figure}
 \centerline{\includegraphics[width=\textwidth]{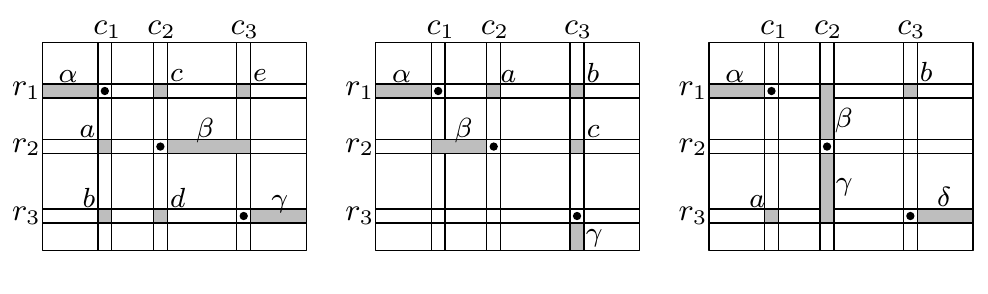}}
\caption{$\rbpat$-avoiders covered by rows $r_1$, $r_2$ and 
$r_3$ (left), by rows $r_1$, $r_2$ and column $c_3$ (center), and by rows 
$r_1$, $r_3$ and column $c_2$ (right). The shaded entries 
are potential 1-entries, the dots represent the three 1-entries $e_1$, $e_2$ 
and~$e_3$.}\label{fig-2typespf}
\end{figure}

This completes the analysis of matrices that can be covered by 3 rows. Suppose 
now that $P$ can be covered by two rows and one column. As each of the three 
entries $e_1$, $e_2$ and $e_3$ must be covered by a distinct line, there are 
three possibilities: either $P$ is covered by rows $r_1$ and $r_2$ and column 
$c_3$; or $P$ is covered by rows $r_1$ and $r_3$ and column $c_2$; or $P$ is 
covered by rows $r_2$ and $r_3$ and column~$c_1$. The last possibility is 
symmetric to the first one, so we only consider the first two.

Suppose $P$ is covered by rows $r_1$ and $r_2$ and column~$c_3$. Choose $c_1$ 
and $c_2$ to be as large as possible, and $r_3$ to be as small as possible. 
Together with the absence of patterns from~$\rbpat$, this means that apart from 
the 1-entries $e_1$, $e_2$ and $e_3$, all the remaining 1-entries must be inside 
the intervals $\alpha$, $\beta$ and $\gamma$ or at the positions $a$, $b$ or $c$ 
depicted in Figure~\ref{fig-2typespf} (center). Moreover, if $a=1$ then $\beta$ 
is empty. Therefore, $P$ is an upside-down copy of a matrix of Type 2, with the 
role of column $c$ played by $c_1$ if $a=0$ or by $c_2$ if~$a=1$.

Let us now suppose that $P$ is covered by rows $r_1$ and $r_3$ and column 
$c_2$. See Figure~\ref{fig-2typespf} (right). Suppose $c_1$ is largest possible 
and $c_3$ smallest possible. We make no assumptions about $r_2$, to keep the 
configuration symmetric. All the 1-entries are in the intervals $\alpha$, 
$\beta$, $\gamma$ and $\delta$ or at the positions $a$ and~$b$ depicted in the 
figure. Since $P$ contains $\ovD_2$, at least one of $a$ and $b$ is a 1-entry. 
Suppose without loss of generality that $a=1$. Then $\alpha$ is empty. If 
$\delta$ is nonempty, then $b=0$, and $P$ is a Type 2 matrix rotated 90 degrees 
clockwise. Otherwise $\delta$ is empty and $P$ is a rotated Type 1 matrix.

The cases when $P$ can be covered by three columns, or by two columns and a 
row, are symmetric to the cases handled so far by a 90-degree rotation.
\end{proof}

We now have all the ingredients to complete the proof of our main result.

\begin{figure}
 \centerline{\includegraphics{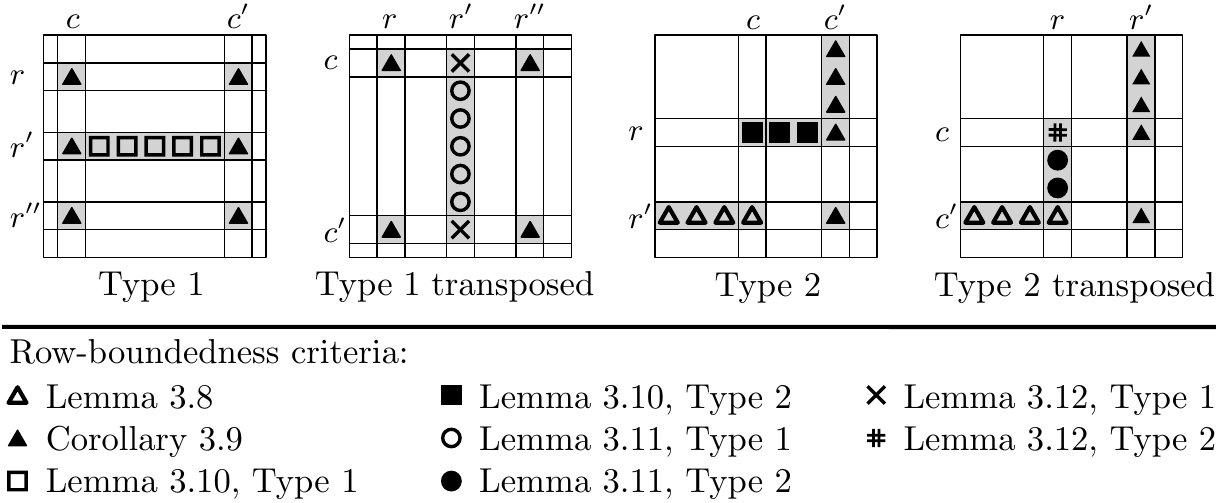}}
\caption{Illustration of the proof of Theorem~\ref{thm-rbound}. The 
symbols indicate the criteria used to prove row-boundedness of the 
1-entries in the two types of patterns of 
Lemma~\ref{lem-2types}.}\label{fig-rowbound}
\end{figure}

\begin{thm}\label{thm-rbound}
 Every pattern $P\in\Avm{\rbpat}$ is row-bounding. 
\end{thm}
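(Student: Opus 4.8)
The plan is to combine Proposition~\ref{pro:boundedints} with the case analysis already assembled. By Proposition~\ref{pro:boundedints}, any $P\in\Avm{\rbpat}$ either avoids $D_2$, or avoids $\ovD_2$, or can be covered by three lines. The first two possibilities are handled directly by Lemma~\ref{lem-walkpat}, so it remains to treat patterns that contain both $D_2$ and $\ovD_2$ and can be covered by three lines. If such a pattern can in fact be covered by two lines, then depending on whether those two lines are two rows, two columns, one row and one column, or (after accounting for empty outer rows/columns via Observation~\ref{obs-empty}) the pattern fits into the hypotheses of Lemma~\ref{lem-1row2col}, Lemma~\ref{lem-tworows2}, or Lemma~\ref{lem-rowcol}; in each case the pattern is row-bounding. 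Since the transpose of a two-line-coverable pattern is again two-line-coverable, and since row-bounding behaviour of $P$ and its transpose are related only by swapping rows and columns, we must be slightly careful here: a pattern covered by two columns is column-bounding by the transposed versions of these lemmas, and we genuinely want \emph{row}-boundedness. However, Lemma~\ref{lem-1row2col} explicitly covers the ``at most two nonempty columns'' case directly, and Lemma~\ref{lem-tworows2} and Lemma~\ref{lem-rowcol} cover the two-row and row-plus-column cases directly, so every two-line-coverable pattern is genuinely row-bounding without invoking transposition.

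The remaining, genuinely hard case is a pattern $P$ that contains both $D_2$ and $\ovD_2$ and needs exactly three lines. Here the plan is to apply Lemma~\ref{lem-2types}, which says that up to rotation or reflection $P$ is one of two explicit types $P_0$. The subtlety is that Lemma~\ref{lem-2types} only guarantees row-boundedness is preserved under those symmetries that map rows to rows (upside-down flips, left-right flips, $180^\circ$ rotation), whereas transposition and $90^\circ$ rotations swap the row- and column-roles. So I first observe that it suffices to prove every Type~1 and every Type~2 pattern from Lemma~\ref{lem-2types}, \emph{and every $90^\circ$ rotation or transpose of such a pattern}, is row-bounding; equivalently, that each Type~1 and Type~2 pattern is both row-bounding and column-bounding. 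For a Type~1 pattern $P_0$ with rows $r<r'<r''$ and columns $c<c'$: the entries $(r,c),(r'',c)$ lie in the leftmost nonempty column, so they are row-bounding by Corollary~\ref{cor-leftmost}; the entries $(r,c'),(r'',c')$ lie in the rightmost nonempty column, hence row-bounding by the mirror of Corollary~\ref{cor-leftmost}; and every 1-entry in $\{r'\}\times[c,c']$ is row-bounding by Lemma~\ref{lem-H}, Type~1, applied with the middle row $r'$, since all 1-entries above $r'$ lie in row $r$ and all below lie in row $r''$ (using the fact that columns strictly between $c$ and $c'$ contain no 1-entry other than the one in row $r'$). For column-boundedness of a Type~1 pattern, transpose the picture: it becomes a pattern covered by three columns with a middle column carrying a vertical interval and the four corners, and the same three lemmas applied in transposed form (Corollary~\ref{cor-leftmost} for the top and bottom rows of the transpose, Lemma~\ref{lem-I} for the middle column's interval entries) give column-boundedness.

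For a Type~2 pattern $P_0$ with rows $r<r'$ and columns $c<c'$ and support inside $(\{r\}\times[c,c'])\cup(\{r'\}\times[c])\cup([r]\times\{c'\})\cup\{(r',c')\}$: the 1-entries of row $r$ in $\{r\}\times[c,c']$ are row-bounding because they lie in a row that contains all the ``horizontal'' mass, with everything below row $r$ sitting either in the bottom-left block $P[(r,k]\times[c]]$ or in the single entry $(r',c')$ to the bottom-right — this is exactly the shape handled by Lemma~\ref{lem-H}, Type~2 (or Type~3, depending on whether $(r',c)$ is the only bottom-left entry); the entries of column $c$ in $\{r'\}\times[c]\cup(\{r\}\times[c])$ are handled by Corollary~\ref{cor-leftmost} (leftmost column) together with, for $(r',c')$, Lemma~\ref{lem-I2} Type~2, which was tailored precisely for this corner entry with a vertical interval in column $c$ below a row carrying a horizontal interval and a lone opposite corner; and the entries in $[r]\times\{c'\}$ above row $r$ are row-bounding by Lemma~\ref{lem-I}, Type~2. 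For column-boundedness of a Type~2 pattern, one again transposes: Type~2 is essentially self-dual in shape up to relabelling, so the transposed pattern is again of Type~2 (or Type~1) up to a row/column flip, and the same lemmas, read in their column versions, apply. Assembling these observations, every 1-entry of every $P\in\Avm{\rbpat}$ is row-bounding, and since a pattern is row-bounding iff each of its 1-entries is (by the displayed inequality $\rc{\Avm{P}}\le\sum_{e}\rce{\Avm{P}}{e}$), the theorem follows.

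The main obstacle is bookkeeping rather than new ideas: one must check, for each of the finitely many sub-shapes produced by Lemma~\ref{lem-2types} (and their rotations/reflections), that \emph{every} 1-entry falls under the hypotheses of one of Lemmas~\ref{lem-leftmost},~\ref{lem-H},~\ref{lem-I},~\ref{lem-I2} (or a trivially symmetric version), being careful that the ``Type'' conditions in those lemmas — which constrain where 1-entries may lie above and below the relevant row — are actually met by the shape in question. The delicate points are (i) handling the corner entries such as $(r',c')$ in a Type~2 pattern, which is exactly why Lemma~\ref{lem-I2} was proved, and (ii) making sure we obtain row-boundedness, not merely boundedness in some rotated sense, which forces us to verify both row- and column-boundedness of the two canonical types so that we may freely apply the $90^\circ$-rotation and transposition symmetries that appear in the statement of Lemma~\ref{lem-2types}. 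Figure~\ref{fig-rowbound} records which criterion is used for which entry, and the proof will proceed type by type following that figure.
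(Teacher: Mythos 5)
Your high-level strategy matches the paper's exactly: split via Proposition~\ref{pro:boundedints}, dispose of the $D_2$/$\ovD_2$-avoiding and two-line cases with Lemmas~\ref{lem-walkpat}, \ref{lem-1row2col}, \ref{lem-tworows2} and \ref{lem-rowcol}, then invoke Lemma~\ref{lem-2types} and show that each of the two canonical types and its transpose is row-bounding. You also correctly identify the subtlety that $90^\circ$ rotations force you to establish both row- and column-boundedness of the two types.

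There are, however, concrete errors in the final case analysis. First, when you argue column-boundedness of Type~1 via its transpose, the entries at the two endpoints of the middle column's interval (the entries $(c,r')$ and $(c',r')$ of $P_0^T$) are \emph{not} covered by Lemma~\ref{lem-I}: rows $c$ and $c'$ of $P_0^T$ contain corner 1-entries outside column $r'$, so the hypothesis of Lemma~\ref{lem-I} forces the interval $[r_1,r_2]$ to exclude those rows. Those endpoints require Lemma~\ref{lem-I2}, which you omit here. Second, your claim that Type~2 is ``essentially self-dual'' and that its transpose is again Type~1 or Type~2 up to a row/column flip is false: Type~2 has two horizontal intervals, one vertical interval and one isolated corner, whereas its transpose has two vertical intervals, one horizontal interval and one corner, and no in-plane reflection or $180^\circ$ rotation interchanges these counts. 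The transpose of Type~2 is a genuinely different shape and must be analysed directly; the paper does this using Lemma~\ref{lem-leftmost}, Corollary~\ref{cor-leftmost}, and — crucially, for the left column's vertical interval — Lemmas~\ref{lem-I} and \ref{lem-I2}. Third, Lemma~\ref{lem-I2} does \emph{not} apply to the corner $(r',c')$ of Type~2 as you claim: after the $180^\circ$ rotation needed to put that entry in the $(r_1,c_1)$ position, the row-$r$ horizontal interval places 1-entries in columns strictly between $c_1$ and $c_2$, violating both Type~1 and Type~2 hypotheses of Lemma~\ref{lem-I2}. That corner is actually covered by Lemma~\ref{lem-leftmost} (as an entry of row $r'$) or the mirror of Corollary~\ref{cor-leftmost} (as an entry of the rightmost nonempty column). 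Relatedly, Corollary~\ref{cor-leftmost} only handles the single leftmost nonempty column, so to cover all of $\{r'\}\times[c]$ you need the more general Lemma~\ref{lem-leftmost}, which is what the paper cites.
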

\begin{proof}
Choose a $P\in\Avm{\rbpat}$. By Proposition~\ref{pro:boundedints}, either $P$ 
can be covered by three lines, or it avoids $D_2$, or it avoids 
$\ovD_2$. If $P$ avoids one of the two patterns of size 2, 
then it is row-bounding by Lemma~\ref{lem-walkpat}. If it can be covered by two 
lines, it is row-bounding by Lemmas~\ref{lem-1row2col}, \ref{lem-tworows2} 
and~\ref{lem-rowcol}.
Finally, if $P$ contains both $D_2$ and $\ovD_2$ and cannot be covered by two 
lines, Lemma~\ref{lem-2types} shows that, up to symmetry, $P$ corresponds to a 
matrix of Type 1 or Type 2. We therefore need to argue that the matrices of 
these two types, as well as their transposes, are row-bounding. See 
Figure~\ref{fig-rowbound}.

If $P$ is of Type 1, its 1-entries in column $c$ or in column $c'$ are 
row-bounding by Corollary~\ref{cor-leftmost}, and those in row $r'$ are 
row-bounding by Lemma~\ref{lem-H}. 

If $P$ is the transpose of a Type 1 matrix, then its 1-entries in columns $r$ 
and $r''$ are row-bounding by Corollary~\ref{cor-leftmost}, and those in column 
$r'$ by Lemmas \ref{lem-I} and~\ref{lem-I2}.

If $P$ is of Type 2, the 1-entries in row $r'$ and in column $c'$ are 
row-bounding by Lemma~\ref{lem-leftmost} and Corollary~\ref{cor-leftmost}, and 
those in row $r$ are row-bounding by Lemma~\ref{lem-H}.

Finally, if $P$ is the transpose of a Type 2 matrix, the 1-entries in 
column $r'$ and in row $c'$ are row-bounding by Lemma~\ref{lem-leftmost} 
and Corollary~\ref{cor-leftmost}, and the remaining 1-entries are covered by 
Lemmas \ref{lem-I} and~\ref{lem-I2}.
\end{proof}

Theorems~\ref{thm-unbound} and \ref{thm-rbound} together imply
Theorem~\ref{thm-main}.

\section{Further directions and open problems}\label{sec-further}

\paragraph{Boundedness of non-principal classes.}
So far, we only considered principal classes of matrices, i.e., classes 
determined by a single forbidden pattern. It is natural to ask to what extent 
our results generalize to arbitrary minor-closed classes of matrices, or at 
least to classes determined by a finite number of forbidden patterns. 

All our row-boundedness results for principal classes are based on the study of 
row-bounding 1-entries in a pattern~$P$. This approach extends straightforwardly 
to the setting of multiple forbidden patterns. In particular, for a set $\cF$ 
of patterns, a pattern $P\in\cF$ and a 1-entry $e$ of $P$, we say that $e$ is 
\emph{row-bounding in $\Avm{\cF}$} if each row of a matrix $M\in\Avm{\cF}$ 
has only a bounded number of 0-runs critical for~$e$ with respect to~$P$. Note 
that if $\cF$ is finite, then $\Avm{\cF}$ is row-bounded if and only if each 
1-entry of each pattern $P\in\cF$ is row-bounding in~$\Avm{\cF}$.

Note also that, by definition, if $e$ is a 1-entry of $P$ that is row-bounding 
in $\Avm{P}$, then for every set of patterns $\cF$ that contains $P$, the entry 
$e$ is also row-bounding in $\Avm{\cF}$, since $\Avm{\cF}$ is a subclass of 
$\Avm{P}$. Therefore, all the criteria for row-bounding entries that we derived 
in Subsection~\ref{ssec-bound} are applicable to non-principal classes as well.

We have seen in Corollary~\ref{cor-first} that a principal class is row-bounded 
if and only if it is column-bounded. Our next example shows that this property 
does not generalize to non-principal classes.

\begin{prop}\label{pro-counter} For the set of patterns $\cF=\{D_4,P\}$ with
\[
 P=\smm{ &\bullet& \\\bullet& & \\ &\bullet& \\ & 
&\bullet} \text{ and } D_4=\smm{\bullet& & & \\ &\bullet& & \\ & &\bullet& \\ 
& &&\bullet},
\]
the class $\Avm{\cF}$ is row-bounded but not column-bounded.
\end{prop}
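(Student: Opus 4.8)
The plan is to treat the two halves separately, via the reduction to individual 1-entries noted before the proposition: since $\cF$ is finite, $\Avm{\cF}$ is row-bounded iff every 1-entry of $D_4$ and of $P$ is row-bounding in $\Avm{\cF}$, and a 1-entry that is row-bounding in a principal class stays so when more patterns are forbidden. The pattern $D_4$ avoids $\ovD_2$, hence is row-bounding by Lemma~\ref{lem-walkpat}, so every 1-entry of $D_4$ is row-bounding in $\Avm{D_4}$ and hence also in $\Avm{\cF}$. In $P$ the 1-entry $(2,1)$ lies in the leftmost nonempty column and $(4,3)$ in the rightmost, so both are row-bounding in $\Avm{P}$ by Corollary~\ref{cor-leftmost} and its mirror, hence in $\Avm{\cF}$. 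The remaining 1-entries $(1,2)$ and $(3,2)$ of $P$ are not covered by the criteria of Subsection~\ref{ssec-bound} applied to $\Avm{P}$ alone — indeed $P$ contains $Q_1$, so $\Avm{P}$ is row-unbounded — and for these the forbidden pattern $D_4$ is essential.

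The heart of the row-bounded half is therefore the claim that $(1,2)$ is row-bounding in $\Avm{\cF}$, and I would argue as follows. Let $M\in\Avm{\cF}$, fix a row $r'$, and let $z$ be a $0$-run of row $r'$ critical for $e^\ast=(1,2)$, witnessed by $f\in z$ and an embedding $\phi$ of $P$ into $M\Delta f$ with $\phi(e^\ast)=f$. Writing $b=\phi(2,1)$, $c=\phi(3,2)$, $d=\phi(4,3)$, these are $1$-entries of $M$ with $b\triangleleft c\triangleleft d$, $\operatorname{row}(f)=r'<\operatorname{row}(b)$, and $\operatorname{col}(b)<\operatorname{col}(f)<\operatorname{col}(d)$. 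Since $M$ avoids $D_4$, the poset $(\supp(M),\triangleleft)$ has height at most $3$, so the $3$-chain $b\triangleleft c\triangleleft d$ is \emph{maximal}; in particular no $1$-entry of $M$ is strictly above and strictly left of $b$. Combined with the fact that no $1$-entry of $M$ is strictly above $b$ and strictly between columns $\operatorname{col}(b)$ and $\operatorname{col}(d)$ — else that entry together with $b,c,d$ would embed $P$ in $M$ — we get that row $r'$ has no $1$-entry in any column smaller than $\operatorname{col}(d)$, except possibly in column $\operatorname{col}(b)$. This forces $z$, which contains a column in $(\operatorname{col}(b),\operatorname{col}(d))$, to be one of the first two $0$-runs of row $r'$, so row $r'$ has at most two $0$-runs critical for $(1,2)$. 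The entry $(3,2)$ is handled by an analogous but more delicate argument, again leveraging the height-$3$ structure coming from $D_4$-avoidance together with the bounded ``forbidden box'' attached to the chain through $\phi(2,1)$ and $\phi(4,3)$. Hence $\Avm{\cF}$ is row-bounded.

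For column-unboundedness I would, for each $p$, build a matrix $M_p\in\Avm{\cF}$ possessing a column of complexity at least $p$, and then greedily change $0$-entries of $M_p$ to $1$-entries while staying in $\Avm{\cF}$, reaching a critical $M_p'\in\Avmax{\cF}$ dominating $M_p$; the distinguished column keeps complexity at least $p$ because each of its relevant $0$-entries, once changed, creates an image of $P$. The matrix $M_p$ is designed so that $\supp(M_p)$ is covered by three increasing ``staircase'' walks, which already forces $M_p\in\Avm{D_4}$; the distinguished column carries $p$ $1$-entries alternating with $p$ $0$-entries, and small interlocking gadgets are attached so that each such $0$-entry is critical for the entry $(1,2)$ of $P$ while $M_p$ itself contains no image of $P$. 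This construction is the main obstacle of the whole proof: a naive layout that simply stacks three antichain-strands already contains $P$, because three stacked antichains carry an increasing $3$-chain and the strand $1$-entries land in exactly the region that $P$ forbids; so the gadgets must be arranged in an interlocking, anti-diagonal fashion — in the spirit of the block construction in the proof of Theorem~\ref{thm-unbound}, but kept $D_4$-safe by remaining on three walks — and verifying simultaneously that $M_p$ avoids $D_4$, avoids $P$, and has all the intended $0$-entries critical is the delicate part.
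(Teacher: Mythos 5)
Your argument for the entry $(1,2)$ of $P$ essentially matches the paper's: the paper's case split (if $\phi(2,1)$ is to the right of the first separating 1-entry, one gets $D_4$; otherwise one gets $P$ via the second separating 1-entry) is exactly your two ``wedge'' observations that $D_4$-avoidance forbids 1-entries above and to the left of $\phi(2,1)$, while $P$-avoidance forbids them in the strip above $\phi(2,1)$ between the columns of $\phi(2,1)$ and $\phi(4,3)$. However, your treatment of $(3,2)$ contains a genuine error and a gap. You assert that $(3,2)$ is not covered by the criteria of Subsection~\ref{ssec-bound} applied to $\Avm{P}$ alone, but it is: Lemma~\ref{lem-I} (Type~1) applies to $P$ with $r_1 = r_2 = 3$ and $c = 2$, since row~$3$ of $P$ has its only 1-entry at $(3,2)$, the 1-entries above row~$3$ are $(1,2)$ (in column $c$) and $(2,1)$ (in row $r_1-1$), and the only 1-entry below row~$3$ is $(4,3)$ (in row $r_2+1$). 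So $(3,2)$ is row-bounding already in $\Avm{P}$, and only $(1,2)$ genuinely needs the additional forbidden pattern~$D_4$. Since you then only hand-wave an ``analogous but more delicate argument'' for $(3,2)$, that part of your row-boundedness proof is incomplete as written (though, ironically, the missing piece is a direct application of a lemma you already cite elsewhere).

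The column-unbounded half is the larger gap. You outline a plan (a matrix whose support lies on three increasing walks, decorated with interlocking gadgets) but explicitly leave unverified the ``delicate part'' of showing it simultaneously avoids $P$, avoids $D_4$, and keeps the distinguished 0-entries critical. The paper's proof here is a single observation that you miss: the straight transpose of the construction from Theorem~\ref{thm-unbound} already avoids $D_4$ for this particular~$P$. Running the construction on $P^T$ (whose image of $Q_1$ occupies three consecutive rows), the blocks $B_{i,j}$ have only two rows; the nonzero blocks sit on an anti-diagonal and are pairwise incomparable, and there is no frame since the $Q_1$-image touches all four sides of $P^T$. Consequently a chain in the constructed matrix uses at most one entry of the special line plus a chain inside a single $2\times 4$ block, giving maximum chain length $1+2=3$; so $D_4$-avoidance is automatic, and transposing yields the desired column-unbounded member of $\Avm{\cF}$. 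No bespoke gadgetry is required, and the ``obstacle'' you describe does not arise.
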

\begin{proof}
To prove that $\Avm{\cF}$ is not column-bounded, we apply the transpose of the 
construction of Theorem~\ref{thm-unbound}, and observe that the constructed 
matrix avoids~$D_4$ (see Figure~\ref{fig-colunbound} (left)).

To prove that the class~$\Avm{\cF}$ is row-bounded, observe first that all the 
1-entries in $D_4$ are row-bounding by Lemma~\ref{lem-H}, the leftmost and the 
rightmost 1-entry of $P$ are row-bounding by Corollary~\ref{cor-leftmost}, and 
the 1-entry $(3,2)$ of $P$ is row-bounding by Lemma~\ref{lem-I}. It thus 
remains to show that the entry $e=(1,2)$ of $P$ is row-bounding in $\Avm{\cF}$.

\begin{figure}
 \centerline{\includegraphics[height=0.25\textwidth]{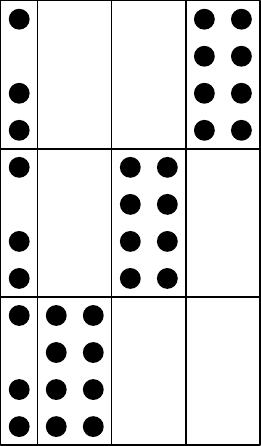} \hfil
\includegraphics[height=0.25\textwidth]{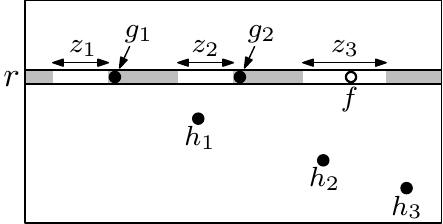}}
 \caption{Left: illustration that $\Avm{\cF}$ has unbounded column-complexity 
relative to the entry $e=(2,1)$ of $P$. Right: 
illustration of the proof that $\Avm{\cF}$ is 
row-bounded.}\label{fig-colunbound}
\end{figure}

We will show that each matrix $M\in\Avm{\cF}$ has at most two 0-runs critical 
for $e$ in any given row. Refer to Figure~\ref{fig-colunbound} (right). For 
contradiction, suppose that there are three 0-runs $z_1< z_2< z_3$ in a row $r$ 
of $M$. Let $g_1$ be a 1-entry of $M$ that lies in row $r$ between $z_1$ and 
$z_2$, let $g_2$ be a 1-entry of $M$ in row $r$ between $z_2$ and $z_3$, and let 
$f$ be a 0-entry in $z_3$ critical for the entry~$e$. Let $\phi$ be an embedding 
of $P$ into $M\Delta f$ with $\phi(e)=f$. 

Consider the three 1-entries $h_1=\phi(2,1)$, $h_2=\phi(3,2)$ and 
$h_3=\phi(4,3)$. If $h_1$ is in a column strictly to the right of $g_1$, then 
$g_1$ forms an image of $D_4$ with the three $h_i$s, a contradiction. If, on 
the 
other hand, $h_1$ is not to the right of $g_1$, then $h_1$ is strictly to the 
left of $g_2$, and $g_2$ forms an image of $P$ with the three $h_i$s (recall 
that $h_3$ is to the right of $f=\phi(1,2)$, and therefore also to the right 
of~$g_2$). This shows that $\Avm{\cF}$ is row-bounded.
\end{proof}

Recall from Corollary~\ref{cor-second}, that any principal subclass of a 
bounded principal class is again bounded. The example of 
Proposition~\ref{pro-counter} shows that this result does not generalize to 
non-principal classes: indeed, the class $\Avm{D_4}$ is bounded by 
Theorem~\ref{thm-main} (or by Corollary~\ref{cor-diag}), while its subclass 
$\Avm{\cF}$ is not bounded.

On the positive side, it is not hard to show that row-boundedness (and 
therefore also boundedness) is closed under union and intersection of classes. 

\begin{prop}
\label{pro-boundunion}
If $\cC_1$ and $\cC_2$ are row-bounded classes of matrices, then the classes 
$\cC_1\cup\cC_2$ and $\cC_1\cap\cC_2$ are row-bounded as well.
\end{prop}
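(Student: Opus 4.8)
The plan is to handle the two set operations separately; the union is essentially free, while the intersection needs a short comparison argument. I will use two elementary facts throughout. First, a minor-closed class is closed under domination: if $N$ belongs to a class $\cD$ and $N$ dominates a binary matrix $M$ of the same dimensions, then $M\im N$ (perform no contractions and note that $N$ itself dominates $M$), so $M\in\cD$. Second, as already observed in the proof of Theorem~\ref{thm-unbound}, every matrix in a class $\cD$ is dominated by some matrix critical for $\cD$, obtained by greedily switching 0-entries to 1-entries while remaining in $\cD$; moreover $\cC_1\cup\cC_2$ and $\cC_1\cap\cC_2$ are again minor-closed classes. For the union, I would argue that any matrix $M$ critical for $\cC_1\cup\cC_2$ is already critical for one of $\cC_1,\cC_2$: indeed $M\in\cC_i$ for some $i$, and since $M\Delta e\notin\cC_1\cup\cC_2$, hence $M\Delta e\notin\cC_i$, for every 0-entry $e$ of $M$, the matrix $M$ is critical for $\cC_i$. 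Thus every critical matrix of $\cC_1\cup\cC_2$ is critical for $\cC_1$ or for $\cC_2$, and $\rc{\cC_1\cup\cC_2}\le\max\{\rc{\cC_1},\rc{\cC_2}\}<\infty$.

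For the intersection, I would fix a critical matrix $M$ of $\cC_1\cap\cC_2$ and a row index $\rho$, and bound the number of 0-runs in row $\rho$ of $M$. Call a 0-entry $e$ of $M$ a \emph{$\cC_i$-witness} if $M\Delta e\notin\cC_i$. Since $M$ is critical for $\cC_1\cap\cC_2$, every 0-entry of $M$, and hence every 0-run of $M$, contains a $\cC_i$-witness for some $i\in\{1,2\}$, so it suffices to bound, for each fixed $i$, the number of 0-runs of row $\rho$ that contain a $\cC_i$-witness. Take $i=1$ and let $M^{+}$ be a matrix critical for $\cC_1$ that dominates $M$. The key observation is that every $\cC_1$-witness $e$ is a 0-entry of $M^{+}$: if $e$ were a 1-entry of $M^{+}$, then $M^{+}$ would dominate $M\Delta e$, forcing $M\Delta e\in\cC_1$ by closure under domination, a contradiction. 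Conversely, every 0-run $w$ of $M^{+}$ in row $\rho$ consists of entries that are 0 in $M^{+}$, hence also 0 in $M$, so $w$ lies inside a single 0-run of $M$. Choosing one $\cC_1$-witness in each relevant 0-run of $M$ and sending it to the 0-run of $M^{+}$ that contains it therefore defines an injection into the 0-runs of $M^{+}$ in row $\rho$, of which there are at most $\rc{\cC_1}$. Symmetrically there are at most $\rc{\cC_2}$ zero-runs of row $\rho$ containing a $\cC_2$-witness, whence row $\rho$ of $M$ has at most $\rc{\cC_1}+\rc{\cC_2}$ zero-runs, giving $\rc{\cC_1\cap\cC_2}\le\rc{\cC_1}+\rc{\cC_2}<\infty$.

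The only step I expect to require care is the injectivity claim in the intersection case: one must check that choosing a single witness per 0-run of $M$ really lands in distinct 0-runs of $M^{+}$, which is exactly what the two directions of the comparison between the 0-runs of $M$ and those of $M^{+}$ within the fixed row provide. Everything else --- minor-closedness of unions and intersections, closure of classes under domination, and the existence of dominating critical matrices --- is routine.
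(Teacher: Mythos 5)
Your proof is correct and follows essentially the same approach as the paper's. For the union both note that a critical matrix for $\cC_1\cup\cC_2$ is already critical for one of $\cC_1,\cC_2$; for the intersection both rest on the same key observation --- a 0-entry $e$ of $M$ with $M\Delta e\notin\cC_1$ must remain a 0-entry of any critical $\cC_1$-matrix $M^+$ dominating $M$ --- which transfers 0-runs of $M$ to distinct 0-runs of $M^+$ and yields the bound $\rc{\cC_1}+\rc{\cC_2}$.
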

\begin{proof}
Let $K_i$ be the row-complexity of the class $\cC_i$, for $i\in\{1,2\}$. Since 
every matrix that is critical for $\cC_1\cup\cC_2$ is also critical for $\cC_1$ 
or for $\cC_2$, we observe that $\cC_1\cup\cC_2$ has row-complexity at most 
$\max\{K_1,K_2\}$. In particular, $\cC_1\cup\cC_2$ is row-bounded.

Let us argue that $\cC_1\cap\cC_2$ is row-bounded as well. We claim that 
$\cC_1\cap\cC_2$ has row-complexity at most $K:=K_1+K_2$. For contradiction, 
suppose that there is a matrix $M$ critical for $\cC_1\cap\cC_2$ with 
row-complexity at least $K+1$. 

Let $r$ be a row of $M$ with maximum complexity, let $z_1, z_2,\dotsc,z_{K+1}$ 
be a sequence of 0-runs in this row, and let $f_i$ be a 0-entry in~$z_i$. By 
criticality of~$M$, we know that for each $i\in[K+1]$, the matrix $M\Delta f_i$ 
does not belong to $\cC_1$ or does not belong to~$\cC_2$. 

In particular, there are either at least $K_1+1$ values of $i$ for which 
$M\Delta f_i$ is not in $\cC_1$, or at least $K_2+1$ values of $i$ for which 
$M\Delta f_i$ is not in $\cC_2$. Suppose without loss of generality that the 
former situation occurs. Let $M^+$ be a critical matrix for the class $\cC_1$ 
that dominates the matrix~$M$. If $f_i$ is a 0-entry of $M$ such $M\Delta f_i$ 
is not in $\cC_1$, then $f_i$ is also a 0-entry of $M^+$. It follows that $M^+$ 
has at least $K_1+1$ 0-runs in row $r$, which is impossible, since $K_1$ is the 
row-complexity of~$\cC_1$.
\end{proof}

In contrast with Proposition~\ref{pro-boundunion}, an intersection of two 
unbounded classes is not necessarily unbounded, as we will now show. Consider 
the two patterns $Q_1=\smm{ &\bullet& 
\\\bullet& & \\ & &\bullet}$ and $Q_2=\smm{ &\bullet& \\ & 
&\bullet\\\bullet& & }$, and recall from Theorem~\ref{thm-main} that both 
$\Avm{Q_1}$ and $\Avm{Q_2}$ are unbounded classes.

\begin{prop}\label{pro-unbinter}
The class $\Avm{\{Q_1,Q_2\}}=\Avm{Q_1}\cap\Avm{Q_2}$ is bounded.
\end{prop}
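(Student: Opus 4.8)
The plan is to feed the non-principal class $\Avm{\{Q_1,Q_2\}}$ into the relative-row-boundedness machinery of Subsection~\ref{ssec-bound}, exploiting the symmetries of the set $\{Q_1,Q_2\}$. First, reversing the order of columns interchanges $Q_1$ and $Q_2$, so $\Avm{\{Q_1,Q_2\}}$ is invariant under this reflection. Second, transposing all matrices turns this class into $\Avm{\{Q_1^\top,Q_2^\top\}}=\Avm{\{Q_1,Q_4\}}$, so $\Avm{\{Q_1,Q_2\}}$ is column-bounded exactly when $\Avm{\{Q_1,Q_4\}}$ is row-bounded. Both $\{Q_1,Q_2\}$ and $\{Q_1,Q_4\}$ are finite, so (as explained at the beginning of this section) it is enough to prove that every $1$-entry of the two patterns in each of these sets is row-bounding in the respective class.

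For each of $Q_1$, $Q_2$, $Q_4$, the $1$-entry in the leftmost nonempty column and the $1$-entry in the rightmost nonempty column are row-bounding already in the principal class $\Avm{Q_i}$, by Corollary~\ref{cor-leftmost} and its column-reversed version, hence also in any subclass. This disposes of all six entries except the top-middle $1$-entry $e=(1,2)$ of $Q_1$, and --- via the column-reversal and vertical-flip symmetries just mentioned --- the analogous entries of $Q_2$ and of $Q_4$. Thus the whole statement reduces to two claims: $e$ is row-bounding in $\Avm{\{Q_1,Q_2\}}$, and $e$ is row-bounding in $\Avm{\{Q_1,Q_4\}}$.

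Each claim I would prove along the lines of Proposition~\ref{pro-counter}. Fix a matrix $M$ in the class and a row $r'$, and observe that a $0$-entry $f=(r',c')$ is critical for $e$ precisely when $M$ has two $1$-entries $b,c$ strictly below row~$r'$ with $b$ above and to the left of $c$ and with $\mathrm{col}(b)<c'<\mathrm{col}(c)$. Supposing $r'$ has many $0$-runs critical for $e$, list them as $z_1<z_2<\dotsb$, choose from each a critical $0$-entry $f_i$ with a witness pair $(b_i,c_i)$, and choose $1$-entries of row $r'$ strictly between consecutive runs; exactly as in the proof of Lemma~\ref{lem-I2}, we may first pass to a subsequence of the $z_i$ so that there are arbitrarily many such intermediate $1$-entries between any two chosen runs. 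Then one compares the columns of the $b_i$, of the $c_i$, and of the intermediate $1$-entries, and checks that in every possible configuration some $1$-entry $g$ of row~$r'$ together with two $1$-entries strictly below it forms a forbidden pattern: if for some $i<j$ the left witness $b_j$ or the right witness $c_i$ is out of place relative to $z_i$ or $z_j$, one obtains a copy of $Q_1$ (and, in the $\{Q_1,Q_2\}$ case, possibly of $Q_2$); otherwise the right witness $c_1$ of the leftmost chosen run is forced to have a large column, and then $g$, $b_1$ and $c_1$ already form a copy of $Q_1$. In all cases this contradicts $M$ lying in the class, so $r'$ has only a bounded number of $0$-runs critical for $e$.

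The delicate point is carrying out this case analysis for $\Avm{\{Q_1,Q_4\}}$. Since $\{Q_1,Q_4\}$ is \emph{not} invariant under column reversal, a triple $g,p,q$ in which $g$ is topmost and $\mathrm{col}(g)$ lies strictly between $\mathrm{col}(p)$ and $\mathrm{col}(q)$ is forbidden only when the lower of $p,q$ has the larger column (yielding $Q_1$); the mirror orientation yields $Q_2$, which is not forbidden here. Guaranteeing the right orientation forces one to pick the witnesses and the intermediate $1$-entries with some care, and to treat separately the degenerate cases where two of the extracted witnesses happen to share a row; this bookkeeping, rather than any genuinely new idea, is the main obstacle. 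Once both claims hold, the two reductions of the first paragraph show that $\Avm{\{Q_1,Q_2\}}$ is both row-bounded and column-bounded, hence bounded.
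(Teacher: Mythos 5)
Your reduction is sound and essentially matches the paper's: show that every $1$-entry of $Q_1$ and of $Q_2$ is both row- and column-bounding in the class, dispose of the left/right (resp.\ top/bottom) entries via Corollary~\ref{cor-leftmost} and its reflections, and isolate the middle entry $e=(1,2)$ of $Q_1$ as the only nontrivial case. Transposing to turn column-boundedness of $\Avm{\{Q_1,Q_2\}}$ into row-boundedness of $\Avm{\{Q_1,Q_4\}}$ is also a valid reformulation (the paper instead argues directly with columns, but that is the same statement under $M\mapsto M^\top$). Your characterisation of when $f$ is critical for $e$ (existence of a witness pair $b,c$ below $f$ with $\mathrm{col}(b)<\mathrm{col}(f)<\mathrm{col}(c)$ and $b$ above $c$) is also correct.

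The genuine gap is that the two central claims --- that $e$ is row-bounding in $\Avm{\{Q_1,Q_2\}}$ and in $\Avm{\{Q_1,Q_4\}}$ --- are not actually proved; you sketch a scheme with arbitrarily many critical $0$-runs, witness pairs $(b_i,c_i)$ for each, and a Ramsey-style subsequence extraction, and then describe the contradiction only as ``if $b_j$ or $c_i$ is out of place one obtains a copy of $Q_1$ \dots otherwise $c_1$ is forced to have a large column.'' That is not a checkable argument: it is unclear what ``out of place'' means, why the case split is exhaustive, or how one guarantees the orientation needed for $Q_1$ rather than the allowed $Q_2$ in the $\{Q_1,Q_4\}$ case --- the very difficulty you yourself flag as ``the main obstacle'' and then defer. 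In fact the needed argument is much shorter than your scheme suggests and uses a fixed constant, not ``arbitrarily many'' runs. For $\Avm{\{Q_1,Q_2\}}$: assume three critical $0$-runs $z_1<z_2<z_3$ in a row, take embeddings $\phi_1$ and $\phi_3$ at $f_1\in z_1$ and $f_3\in z_3$, set $h_1=\phi_1(2,1)$, $h_2=\phi_1(3,3)$, $h_3=\phi_3(3,3)$, and let $g_1,g_2$ be $1$-entries in the row between consecutive runs; comparing $\mathrm{row}(h_1),\mathrm{row}(h_2),\mathrm{row}(h_3)$ and the columns of the $g_i$ forces either $h_1,g_2,h_3$ to form $Q_1$ or $h_2,g_2,h_3$ to form $Q_2$. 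For $\Avm{\{Q_1,Q_4\}}$ (equivalently column-boundedness of the original class, which is what the paper proves): a single embedding $\phi$ at the leftmost critical $0$-run and two $1$-entries $g_1,g_2$ of the row to its right suffice; either $g_1$ lies to the left of $\mathrm{col}(\phi(3,3))$ and $g_1,\phi(2,1),\phi(3,3)$ form $Q_1$, or $g_2$ lies to its right and $g_2,\phi(2,1),\phi(3,3)$ form $Q_4$. You should replace the vague ``bookkeeping'' paragraph with an explicit comparison of this small fixed set of entries, making precise exactly which forbidden pattern arises in each branch.
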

\begin{proof}
Let us first show that every 1-entry of the two patterns $Q_1$ and $Q_2$ is 
row-bounding for $\cC:=\Avm{\{Q_1,Q_2\}}$. For a 1-entry that belongs to the 
first or the last column of either pattern, this follows from 
Corollary~\ref{cor-leftmost}. 

Consider the 1-entry $e=(1,2)$ of the pattern $Q_1$. We claim that each 
row in a matrix $M\in\cC$ has at most two 0-runs critical for~$e$. Suppose 
that a matrix $M\in\cC$ has a row $r$ with three 0-runs 
$z_1<z_2<z_3$ critical for~$e$. Let $f_i$ be a 0-entry in $z_i$ critical for 
$e$, and let $g_i$ be a 1-entry in row $r$ between $z_i$ and $z_{i+1}$, for 
$i\in\{1,2\}$. 

For $i\in\{1,2,3\}$, let $\phi_i$ be an embedding of $Q_1$ into $M\Delta f_i$ 
that maps $e$ to~$f_i$. Consider the three 1-entries $h_1=\phi_1(2,1)$, 
$h_2=\phi_1(3,3)$, and $h_3=\phi_3(3,3)$. Let $p_i$ and $q_i$ be the row and 
the column containing~$h_i$.

\begin{figure}
 \centerline{\includegraphics[width=0.9\textwidth]{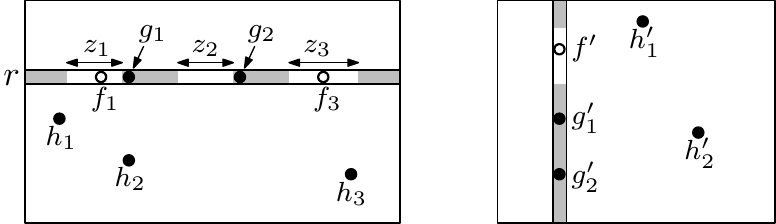}}
 \caption{Illustration of the row-boundedness (left) and column-boundedness 
(right) of $\Avm{\{Q_1,Q_2\}}$.}\label{fig-unbinter}
\end{figure}

Note that $g_1$ cannot be to the left of column $q_2$, since then $h_1$, $g_1$ 
and $h_2$ would form an image of~$Q_1$. It follows that $g_1$ is in the column 
$q_2$ or to the right of it, and consequently, we have $q_2<q_3$. Moreover, if 
$p_3>p_1$, then $h_1$, $g_2$ and $h_3$ form an image of $Q_1$, so $p_3$ is no 
larger than $p_1$ and hence $p_3<p_2$. But then $h_2$, $g_2$ and $h_3$ form an 
image of $Q_2$, a contradiction. 

By symmetry, the 1-entry $(1,2)$ of $Q_2$ is row-bounding as well, and 
therefore $\cC$ is row-bounded.

Let us now argue that $\cC$ is column-bounded. It is enough to show that the 
1-entry $e'=(2,1)$ of $Q_1$ is column-bounding for $\cC$, the rest follows from 
symmetry and from Corollary~\ref{cor-leftmost}. Suppose that a matrix $M\in\cC$ 
has a column $c$ with three 0-runs critical for~$e'$. In particular, column $c$ 
contains a 0-entry $f'$ critical for $e'$ such that below~$f'$, there are at 
least two 1-entries $g'_1$ and $g'_2$ in column $c$ of~$M$. Suppose that $g'_1$ 
is above~$g'_2$. 

Let $\phi$ be an embedding of $Q_1$ into $M\Delta f'$ with $\phi(e')=f'$. 
Define 
$h'_1=\phi(1,2)$ and $h'_2=\phi(3,3)$. Let $r'$ be the row containing~$h'_2$. 
If $g'_1$ is above row $r'$, then $g'_1$, $h'_1$ and $h'_2$ form a copy 
of~$Q_1$, and if $g'_1$ is not above row $r'$, then $g'_2$ is below row $r'$ 
and $g'_2$, $h'_1$ and $h'_2$ form a copy of $Q_2$, a contradiction.
\end{proof}

\paragraph{Open problems.}
A natural question arising from our results is to extend the dichotomy of 
Theorem~\ref{thm-main} to non-principal classes of matrices.
\begin{problem}
 For which sets $\cF$ of patterns is the class $\Avm{\cF}$ row-bounded? Can we 
characterize such sets $\cF$, at least when $\cF$ is finite?
\end{problem}

The notion of complexity we used in this paper is quite crude, in the sense that 
it only takes into account single lines of the corresponding matrix. It is 
reasonable to expect that matrices from a class of unbounded complexity possess 
nontrivial properties that could be revealed by a more refined approach.
\begin{problem}
Is there a refinement of our complexity notion that would provide nontrivial 
insight into the structure of critical matrices in unbounded classes?
\end{problem}

Throughout the paper, we focused on distinguishing bounded classes from 
unbounded ones. We made no attempts to obtain tight estimates for the actual 
value of the complexity of a bounded class. This might be a line of research 
worth pursuing.
\begin{problem}
What is the highest possible value of $\rc{\Avm{P}}$, over all row-bounding 
patterns $P$ of a given size $k\times \ell$? For which pattern is this maximum 
attained?
\end{problem}

By Observation~\ref{obs-empty},  if $P^+$ is a pattern
obtained by adding an empty row or column to the boundary of a pattern $P$, 
then $\Avm{P}$ has the same complexity as~$\Avm{P^+}$, and the avoiders of $P^+$ 
can be easily described in terms of the avoiders of~$P$. 

It is, however, more challenging to deal with a pattern 
$P^+$ obtained by inserting an empty line into the interior of a 
pattern~$P$. Theorem~\ref{thm-main} implies that $P$ is bounding if and only if 
$P^+$ is bounding, but we are not aware of any direct proof of this.
\begin{problem}
Let $P^+$ be a pattern obtained from a pattern $P$ by inserting a new empty row 
or column to an arbitrary position inside~$P$. Can we bound $\rc{\Avm{P^+}}$ in 
terms of $\rc{\Avm{P}}$? Can we describe the avoiders of $P^+$ in terms of the 
avoiders of $P$? If $\cF$ is a set of patterns and $\cF^+$ a set of patterns 
obtained by inserting empty rows and columns to the patterns in $\cF$, is it 
true that $\Avm{\cF^+}$ is bounded if and only if $\Avm{\cF}$ is?
\end{problem}

\footnotesize
\bibliographystyle{plain}     


\bibliography{matrices}
\end{document}